\documentclass{amsart}


\usepackage{amsmath,amsfonts,amssymb,amsthm,amscd,comment,euscript}
\usepackage{etoolbox}
\usepackage{stmaryrd}
\usepackage[all]{xy}
\usepackage{graphicx}
\usepackage{enumerate, enumitem} 
\usepackage[breaklinks, colorlinks=true, linktocpage]{hyperref} 
\usepackage[usenames,dvipsnames]{xcolor}
\usepackage{tikz-cd}
\usepackage{xcolor}
\usepackage{soul}
\usepackage{stmaryrd} 

\usepackage{hyperref} 

\usepackage[capitalize]{cleveref}   


\theoremstyle{plain}
\newtheorem{lem}{Lemma}[section]
\newtheorem{cor}[lem]{Corollary}
\newtheorem{prop}[lem]{Proposition}

\newtheorem{thm}[lem]{Theorem}
\newtheorem{calc}[lem]{Calculation}
\newtheorem{assumption}[lem]{Assumption}

\theoremstyle{definition}
\newtheorem{ex}[lem]{Example}
\newtheorem{rem}[lem]{Remark}
\newtheorem{dfn}[lem]{Definition}
\newtheorem{nota}[lem]{Notation}










%
%


%
%


%
%



%
%


  %


   %

   %

\newcommand{\arxiv}[1]{\href{http://arxiv.org/abs/#1}{\tt arXiv:\nolinkurl{#1}}}

\newcommand{\arxivpdf}[1]{\href{http://arxiv.org/pdf/#1}{\tt arXiv:\nolinkurl{#1}}}

%
%










\leftmargin=0in
\topmargin=0pt
\headheight=0pt
\oddsidemargin=0in
\evensidemargin=0in
\textheight=8.75in
\textwidth=6.5in
\parindent=0.5cm
\headsep=0.25in
\widowpenalty10000
\clubpenalty10000


\newtoggle{comments}
\newtoggle{details}
\newtoggle{detailsnote}

\toggletrue{comments}   
\togglefalse{details}   

\toggletrue{detailsnote}   

\iftoggle{comments}{%
	\newcommand{\comments}[1]{
		\ \\
		{\color{red}
			\textbf{RG:} #1
		}
		\\
	}
}{%
	\newcommand{\comments}[1]{}
}

\iftoggle{details}{%
	\newcommand{\details}[1]{
		\ \\
		{\color{OliveGreen}
			\textbf{Details:} #1
		}
		\\
	}
}{%
	\newcommand{\details}[1]{}
}


\begin{document}

\title[]{The formal affine Demazure algebra and real finite reflection groups}

\author[]{Raj Gandhi}

\address{
	Department of Mathematics and Statistics \\
	University of Ottawa
}
\email{rgand037@uottawa.ca}

\address{} 
\thanks{}

\address{}
\thanks{}

\subjclass[2010]{14F43, 20F55, 20C99}

\keywords{Demazure operator, formal group law, finite reflection group}

\begin{abstract}
	In this paper, we generalize the formal affine Demazure algebra of Hoffnung-Malag\'on-L\'opez-Savage-Zainoulline to all real finite reflection groups. We begin by generalizing the formal group ring of Calm\`es-Petrov-Zainoulline to all real finite reflection groups. We then define and study the formal Demazure operators that act on the formal group ring. Using these results and constructions, we define and study the formal affine Demazure algebra for all real finite reflection groups. Finally, we compute several structure coefficients that appear in a braid relation among the formal Demazure elements, and we conclude this paper by computing all structure coefficients for the reflection groups $I_2(5)$, $I_2(7)$, $H_3$, and $H_4$.
\end{abstract}

\maketitle

%

\tableofcontents

\section{Introduction}

Let $\mathrm{h}^*$ be an algebraic oriented cohomology theory in the sense of Levine-Morel \cite{LM07}. If $X$ is a smooth projective variety over a field $k$, then, given a vector bundle $E\to X$ over $X$ of rank $n$, there is a set of Chern classes $c_i(E)\in \mathrm{h}^i(X)$, $i\in\{0,\dotsc,n\}$. Any two line bundles $\mathcal{L}_1$ and $\mathcal{L}_2$ over $X$ satisfy \textit{Quillen's formula},
\[c_1(\mathcal{L}_1\otimes \mathcal{L}_2)=F(c_1(\mathcal{L}_1),c_1(\mathcal{L}_2)),\]
where $F$ is the formal group law associated to $\mathrm{h}^*$. 
Suppose $G$ is a split semisimple simply connected linear algebraic group over a field $k$, and let $T$ be a maximal split torus sitting inside a Borel subgroup $B$ of $G$, so that $G/B$ is a complete flag variety.
In \cite{CPZ}, Calm\`es-Petrov-Zainoulline constructed the \textit{formal group ring}, which we will denote $\widetilde{R\llbracket\Lambda\rrbracket}_F$, where $F$ is a one-dimensional commutative formal group law over a commutative unital ring $R$. If $\mathrm{h}^*$ is the Chow theory, then $F$ is the additive formal group law over $R=\mathbb{Z}$, and $\widetilde{R\llbracket\Lambda\rrbracket}_F$ is the completion of the symmetric algebra $S_\mathbb{Z}^*(\Lambda)$, where $\Lambda$ is the character lattice of $T$. When $R=\mathrm{h}^*(\mathrm{Spec}(k))$, the authors showed that there is a ring homomorphism, called the chararacteristic map,
\[\mathfrak{c}_{G/B}:\widetilde{R\llbracket\Lambda\rrbracket}_F\to \mathrm{h}^*(G/B).\]
Using this map, the authors constructed an algebraic model $\mathcal{H}(\Lambda)_F$ for the algebraic oriented cohomology ring $\mathrm{h}^*(G/B)$ in terms of augmented Demazure operators. Specializing $\mathrm{h}^*$ to the Chow theory, $\mathcal{H}(\Lambda)_F$ is the algebraic model for the Chow theory of the flag variety $G/B$ constructed by Demazure in \cite{D73}. 

In \cite{KK86,KK90}, Kostant-Kumar described the equivariant cohomology and $K$-theory of flag varieties using the techniques of nil-Hecke and 0-Hecke algebras. These algebras are generated by Demazure operators, which satisfy a braid relation.
Calm\`es, Hoffnung, Malag\'on-L\'opez, Savage, Zainoulline, and Zhong used the formal group ring $\widetilde{R\llbracket\Lambda\rrbracket}_F$ to generalize the nil-Hecke and $0$-Hecke algebras of Kostant-Kumar to an arbitrary algebraic oriented cohomology theory \cite{CZZ3,CZZ1,CZZ2,HMSZ}. In particular, they constructed and studied the formal affine Demazure algebra, $\mathbf{D}_{F}$, which is generated by formal Demazure elements that satisfy a $\textit{twisted}$ braid relation (see \cite[Prop. 6.8]{HMSZ}). After specializing $F$ to the additive and multiplicative formal group laws, $\mathbf{D}_{F}$ equals completion of the nil-Hecke and 0-Hecke algebra, respectively.

Let $W$ be a real finite reflection group, let $\Sigma$ be a root system of $W$, and let $\Delta$ be a simple system of $\Sigma$. The first goal of this paper is to generalize the formal group ring of \cite{CPZ} to all real finite reflection groups. We begin by defining the \textit{real root lattice} $\Lambda$ with respect to the pair $(\Sigma,\Delta)$. When $\Sigma$ is crystallographic, $\Lambda$ is the crystallographic root lattice, as described in \cite[\S 2]{CZZ1}. Using the data of a formal group law $F$ over a subring $R$ of $\mathbb{C}$ satisfying the conditions of \cref{assumption-ring}, and the real root lattice $\Lambda$, we define the \textit{formal group ring} $R\llbracket\Lambda\rrbracket_F$. When $\Sigma$ is crystallographic, $R\llbracket\Lambda\rrbracket_F$ is the formal group ring $\widetilde{R\llbracket\Lambda\rrbracket}_F$ of \cite{CPZ}. Once $R\llbracket\Lambda\rrbracket_F$ is defined, we study the subalgebra $\mathcal{D}_{(R,F)}(\Lambda)$ of the endomorphism algebra of $R\llbracket\Lambda\rrbracket_F$ generated by formal Demazure operators and by multiplication by elements in $R\llbracket\Lambda\rrbracket_F$. In particular, we show that $\mathcal{D}_{(R,F)}(\Lambda)$ is a free finitely-generated $R\llbracket\Lambda\rrbracket_F$-module. When $\Sigma$ is crystallographic, the  $R\llbracket\Lambda\rrbracket_F$-algebra $\mathcal{D}_{(R,F)}(\Lambda)$ is the endomorphism algebra studied in \cite {CPZ}. When $F$ is the additive formal group law and $R=\mathbb{C}$, the  $R\llbracket\Lambda\rrbracket_F$-algebra $\mathcal{D}_{(R,F)}(\Lambda)$ is the completion of the endomorphism algebra studied by Hiller in \cite[Ch. IV]{H82}. When $F$ is the additive formal group law, $R=\mathbb{Z}$, and $\Sigma$ is crystallographic, the $R\llbracket\Lambda\rrbracket_F$-algebra $\mathcal{D}_{(R,F)}(\Lambda)$ is the completion of the endomorphism algebra studied by Demazure in \cite{D73}.

The second goal of this paper is to extend the formal affine Demazure algebra of \cite{HMSZ} to all real finite reflection groups. We first define the formal affine Demazure algebra $\mathbf{D}_{F}$ for all real finite reflection groups using the formal group ring $R\llbracket\Lambda\rrbracket_F$. We then show that several results in \cite{CZZ1,HMSZ} regarding the crystallographic formal affine Demazure algebra extend to all real finite reflection groups. In particular, we describe $\mathbf{D}_{F}$ in terms of generators and relations in \cref{thm:basis-theorem}, as was done in \cite[Thm. 7.9]{CZZ1}. We then compute several structure coefficients that appear in a braid relation among the formal Demazure elements (see \cref{thm:linear-combo2}), and we introduce a seemingly new formula involving the formal Demazure elements in the process (see \cref{cor:linear-combo-2}).
In addition, we compute all structure coefficients for the dihedral groups $I_2(5)$ and $I_2(7)$ in \cref{ex:order-5} and \cref{ex:order-7}, and the reflection groups $H_3$ and $H_4$ in \cref{ex:H3} and \cref{ex:H4}.
The formulas for the structure coefficients that we obtain in this paper generalize several formulas that appear in the literature, such as the formulas in \cite[Prop. 6.8]{HMSZ} and \cite[Section 8]{GR13}, to all real finite reflection groups.

This paper is organized as follows. In \cref{section:generalized-root-lattice}, we recall the definition and basic properties of real finite reflection groups and their associated root systems. We then define the real root lattice $\Lambda$, generalizing the crystallographic root lattice to noncrystallographic root systems. In \cref{section:regularity}, we recall the definition and basic properties of formal groups laws, and we discuss the formal group ring constructed in \cite{CPZ} with respect to the real root lattice. In \cref{section:formal-group-algebra}, we define the formal group ring for all real finite reflection groups. In \cref{section:localization}, we define and discuss the formal Demazure operators acting on the formal group ring. In \cref{section:endomorphisms}, we define the associated graded ring with respect to the formal group ring and use it to prove a basis theorem for a subalgebra of the endomorphism algebra of the formal group ring containing the formal Demazure operators. This section closely follows \cite[\S 4]{CPZ}. In \cref{section:Demazure-elements}, we define and discuss the formal affine Demazure algebra for all real finite reflection groups. In \cref{section:presentation}, we prove a presentation for the formal affine Demazure algebra in terms of the formal Demazure elements. This section closely follows \cite[\S 6 and \S7]{CZZ1}.
In \cref{section:non-simply-laced}, we compute several structure coefficients that appear in a braid relation among the formal Demazure elements. In \cref{section:applications}, we analyze the structure coefficients derived in \cref{section:non-simply-laced} after specializing to certain formal group laws, and we compute all structure coefficients for the groups $I_2(5)$, $I_2(7)$, $H_3$, and $H_4$. In \cref{section:appendix}, we provide computations for products of up to seven formal Demazure elements for all real finite reflection groups.

\

\textbf{Acknowledgements}: The author is grateful to K.~Zainoulline for proposing the project of generalizing the formal affine Demazure algebra to all real finite reflection groups, for providing many comments and suggestions in the construction of the generalized formal group ring, and for his encouragement to carry out this project to its completion. The author would also like to thank A.~Savage for his careful reading of the paper and for his comments on earlier drafts of this work. Finally, the author would like to thank K. Besrour, J. Champaign, C. Kioulos, and S. Pilon for some useful discussions. This project was partially supported by an Undergraduate Student Research Award and a Canada Graduate Scholarship - M.Sc. from the Natural Sciences and Engineering Research Council of Canada. It was also supported by an Ontario Graduate Scholarship and by the University of Ottawa.
\section{Real finite reflection groups and the real root lattice}{\label{section:generalized-root-lattice}}
In this section, we recall the definition and basic properties of real finite reflection groups and their associated root systems. We then define the real root lattice $\Lambda$, generalizing the crystallographic root lattice to noncrystallographic root systems.

Let $V=\mathbb{R}^n$, and let $(\cdot{ , }\cdot)$ be the standard inner product on $V$. For any $\alpha \in V$, the \textit{reflection} across $\alpha$ is the linear operator $s_\alpha$ defined by the formula
\[s_\alpha(v)=v-2\tfrac{(\alpha,v)}{(\alpha,\alpha)}\alpha,\quad v\in V.\]
One can easily compute that $s_\alpha^2=1$.
\begin{dfn}{\textup{(see} \cite[pp. 6]{H90} or \cite[Ch. I, \S 3, Def. 3.1]{H82}\textup{)}}{\label{dfn:root-system}}
	A \textit{root system} $\Sigma$ in $V$ is a finite set  of nonzero vectors in $V$ satisfying the conditions:
	\begin{enumerate}
		\item $\Sigma\cap \mathbb{R}\alpha=\{\alpha,-\alpha\}$ for all $\alpha\in\Sigma$;
		\item $s_\alpha(\Sigma)=\Sigma$ for all $\alpha\in\Sigma$;
		\item The roots $\alpha\in \Sigma$ generate $V$.
	\end{enumerate}
\end{dfn}
The \textit{rank} of a root system in $V$ is the dimension of $V$ as an $\mathbb{R}$-vector space. Fix any root system $\Sigma$ in $V$, and let $\alpha\in\Sigma$ be any root. Let $\alpha^\vee:V\to \mathbb{R}$ be the linear function defined by $\alpha^\vee(\beta)=2\tfrac{(\alpha,\beta)}{(\alpha,\alpha)}$, $\beta\in V$. We call $\alpha^\vee$ the \textit{coroot} of $\alpha$. We adopt the following naming convention:
\begin{itemize}
	\item If $\alpha^\vee(\beta)\in\mathbb{Z}$ for all $\alpha,\beta\in\Sigma$, we call $\Sigma$ a \textit{crystallographic root system}. Otherwise, we call $\Sigma$ a \textit{noncrystallographic root system}.
\end{itemize}
Let $W$ be the group generated by the reflections $s_\alpha$ over all roots $\alpha\in\Sigma$. We call $W$ the \textit{real finite reflection group} of $\Sigma$. If $\Sigma$ is crystallographic, we call $W$ a \textit{Weyl group}. Let $\Delta=\{\alpha_1,\dotsc,\alpha_n\}$ be a subset of $\Sigma$. We call $\Delta$ a \textit{simple system} of $\Sigma$ if it is an $\mathbb{R}$-basis of $V$, and if every root $\alpha\in\Sigma$ can be written as an $\mathbb{R}$-linear combination of elements in $\Delta$ with all coefficients nonnpositive or all coefficients nonnegative. The elements in $\Delta$ are called \textit{simple roots} and the reflection across a simple root is called a \textit{simple reflection}. It is shown in \cite[Ch. I, Thm. 1.3]{H90} that every root system contains a simple system. If a root $\alpha$ can be written as a linear combination of the simple roots with all coefficients nonpositive (resp. nonnegative), then $\alpha$ is called a \textit{negative root} (resp. \textit{positive root}). The \textit{positive system} $\Sigma^+$ is the set of positive roots of $\Sigma$ with respect to $\Delta$, and the \textit{negative system} $\Sigma^-$ is the set of negative roots of $\Sigma$ with respect to $\Delta$. If $\alpha_i$ is a simple root, we denote a reflection along $s_{\alpha_i}$ by $s_i$, and we set $S=\{s_i\mid  \alpha_i\in\Delta\}$.

\begin{lem}{\textup{(see} \cite[Ch. I, Cor. 1.5]{H90}\textup{)}}{\label{lem:root-conjugation}}
	For any $\alpha\in\Sigma$, there is a reflection $w\in W$ and a simple root $\alpha_i\in\Delta$ such that $\alpha=w(\alpha_i).$
\end{lem}

\begin{thm}{\textup{(see} \cite[Ch. I, Thm. 1.9]{H90}\textup{)}}{\label{thm:generation-by-simple-reflections}}
	$W$ is generated by the set of simple reflections $S$, subject only to the relations:
	\[(s_is_j)^{m_{i,j}}=1,\quad \alpha_i,\alpha_j\in\Delta,\]
	where $m_{i,j}$ is the order of $s_is_j$ in $W$. 
\end{thm}

\begin{rem}
	\cref{lem:root-conjugation} and \cref{thm:generation-by-simple-reflections} tell us that, if $\Delta$ is a simple system of a root system $\Sigma$, then $\Sigma$ consists of the elements $w(\alpha_j)$, where $w=s_{i_1}\dotsb s_{i_k}$ is a product of simple reflections and $\alpha_j\in\Delta$.
\end{rem}

\begin{thm}{\textup{(see} \cite[Ch. I, \S 2]{H90}\textup{)}}
	Let $\Sigma$ be a noncrystallographic root system, and let $W$ be the real finite reflection group of $\Sigma$. Then $W$ is either the dihedral group $I_2(m)$, $m\geq 3$, of order $2m$, the full icosahedral group $H_3$, or the symmetry group $H_4$ of the hyperdodecahedron.
\end{thm}

\begin{dfn}{\textup(see \cite[Ch. I, \S 1.6 and Ch. II, \S 5.9]{H90})}
 By \cref{thm:generation-by-simple-reflections}, we can write $w\in W$ as a product $w=s_{i_1}\dotsb s_{i_r}$ of simple reflections. The \textit{length} $l(w)$ of $w$ is the minimal $r$ for which such an expression exists, and we call such an expression a \textit{reduced decomposition of $w$}. The \textit{longest word} in $W$ is the unique word in $W$ of maximal length. Write $w'\to w$ if $l(w)>l(w')$ and $w=w'u$ for some $u\in W$. We define the \textit{Bruhat order} on $W$ to be the partial ordering $<$ on $W$ defined by $w'<w$ if and only if there is a sequence $w'=w_0\to w_1\to\dotsb \to w_m=w$.
\end{dfn}

We will refer to the next few results frequently throughout this paper. 
We use the notation $s_{i,j,\dotsc}^{(k)}$ to denote the product of $k$ simple reflections $s_is_js_is_js_i\dotsb$.

\begin{prop}{\label{prop:positive-roots}}{\textup{(see} \cite[Ch. I, Prop. 3.6]{H82}\textup{)}}
	Let $v=s_{i_1}\dotsb s_{i_t}$ be a reduced decomposition of $v\in W$. Then
	\[v(\Sigma^-)\cap \Sigma^+=\{\theta_{i_j}:1\leq j\leq t\},\]
	where $\theta_{i_1}=\alpha_{i_1}$, and $\theta_{i_j}=s_{i_1}\dotsb s_{i_{j-1}}(\alpha_{i_j})$ if $j>1$.
	
	If $v$ is the longest word in $W$, then 
	\[\Sigma^+=\{\theta_{i_j}:1\leq j\leq t\}.\]
\end{prop}

\begin{lem}{\label{lem:diheral-property}}
	We have
	$\alpha_j =\begin{cases}
	s_{i,j,\dotsc}^{(m-1)}(\alpha_j),\quad \textit{m \textup{even}};\\
	s_{i,j,\dotsc}^{(m-1)}(\alpha_i),\quad \textit{m \textup{odd}}.
	\end{cases}$
\end{lem}
\begin{proof}
	Let $m_{i,j}$ be the order of $s_is_j$ in $W$, and suppose $m_{i,j}$ is odd. Since $s_{i,j,\dotsc}^{(m_{i,j})}=s_{j,i,\dotsc}^{(m_{i,j})}$, it follows from \cref{prop:positive-roots} that
	\begin{align*}
	\Sigma_{i,j}^+:&=\{\alpha_i,s_i(\alpha_j),s_i s_j(\alpha_i),\dotsc,s_{i,j,\dotsc}^{(m-1)}(\alpha_i)\}=\{\alpha_j,s_j(\alpha_i),s_j s_i(\alpha_j),\dotsc,s_{j,i,\dotsc}^{(m-1)}(\alpha_j)\}.
	\end{align*}
	Thus, 
	\begin{align*}
	s_j\left(\Sigma_{i,j}^+\right)&=\{s_j(\alpha_i),s_j s_i(\alpha_j),s_js_is_j(\alpha_i),\dotsc,s_{j,i,\dotsc}^{(m)}(\alpha_i)\}=\{s_j(\alpha_j),\alpha_i, s_i(\alpha_j),\dotsc,s_{i,j,\dotsc}^{(m-2)}(\alpha_j)\}.
	\end{align*}
	Since $s_j(\Sigma^+)\cap \Sigma^-=\{s_j(\alpha_j)\}$, we must have that $s_{i,j,\dotsc}^{(m-1)}(\alpha_i)=\alpha_j$. The proof for even $m_{i,j}$ is similar.
	\details{
		Suppose $m$ is even.
		Since $s_{\alpha\dotsc}^{(m-1)}=s_{\beta\dotsc}^{(m-1)}$, it follows from \cref{prop:positive-roots} that
		\begin{align*}
		\Sigma^+:&=\{\alpha,s_\alpha(\beta),s_\alpha s_\beta(\alpha),\dotsc,s_{\alpha\dotsc}^{(m-1)}(\beta)\}\\&=\{\beta,s_\beta(\alpha),s_\beta s_\alpha(\beta),\dotsc,s_{\beta\dotsc}^{(m-1)}(\alpha)\}.
		\end{align*}
		Note that $s_\beta(\Sigma^+)\cap \Sigma^-=\{s_\beta(\beta)\}$. Since $s_\beta(\alpha), s_\beta s_\alpha (\beta),\dotsc, s_{\beta\dotsc}^{(m-1)}(\alpha)$ all lie in $\Sigma^+$, we must have that $s_{\alpha\dotsc}^{(m-1)}(\beta)=\beta$.}	
\end{proof}

\begin{prop}\textup{(see \cite[Ch. I, \S 1.7, Ex. 2]{H90})}\label{prop:exchange}
Let $w=s_{i_1}\dotsb s_{i_r}$ be a (not necessarily reduced) decomposition of $w\in W$ in terms of simple reflections. If $l(sw)<l(w)$ for some simple reflection $s$, then there is an index $i_j$ for which $sw=ss_{i_1}\dotsb \hat{s}_{i_j}\dotsb s_{i_r}$. 	
\end{prop}

\begin{thm}{\textup{(see \cite[Ch. 2, \S 2.2, Thm. 2.2.2]{BB05})}}\label{thm:subword}
	Let $w=s_{i_1}\dotsb s_{i_q}$ be a reduced decomposition of $w\in W$. Then $u\leq w$ if and only if there is a reduced decomposition $u=s_{i_{r_1}}s_{i_{r_2}}\dotsb s_{i_{r_t}}$ of $u$ such that $1\leq r_1<r_2< \dotsb< r_t\leq q$.
\end{thm}
\begin{prop}{\textup{(see \cite[Ch. I, Prop. 1.2]{H90})}}\label{prop:orthogonal-property}
	For any root $\beta\in\Sigma$ and reflection $w\in W$, we have $ws_{\alpha}w^{-1}=s_{w(\alpha)}$.
\end{prop}

\begin{dfn}{\textup{(cf.} \cite[pp. 22]{H82}\textup{)}}
	If $\Sigma$ is a root system in $V$ with finite reflection group $W$, and $\Delta$ is a simple system of $\Sigma$, then we call the pair $(\Sigma,\Delta)$ a \textit{geometric realization} of $W$ in $V$.
\end{dfn}
\begin{rem}
As stated in \cite[Ch. I, \S 1.1, Ex. 1]{H90}, the reflections form a single conjugacy class in $I_2(m)$ when $m$ is odd, but form two conjugacy classes when $m$ is even. This tells us that, given a geometric realization of $I_2(m)$, all roots have the same length when $m$ is odd. However, only the lengths of the roots in each conjugacy class are guaranteed to be the equal when $m$ is even.
\end{rem}

\begin{rem}
	The rank $2$ root systems are $A_1\times A_1$, $A_2$, $B_2$, $G_2$, and the root systems of $I_2(m)$, $m\geq 3$.
\end{rem}

Let $W$ be a real finite reflection group, and let $(\Sigma,\Delta)$ be a geometric realization of $W$ in $V$, with simple system $\Delta=\{\alpha_1,\dotsc,\alpha_n\}$. Let $\alpha\in \Sigma$ be any root. By definition, there exist unique elements $c_i^\alpha\in \mathbb{R}$ such that $\alpha=c_1^\alpha\alpha_1+\dotsb+c_n^\alpha\alpha_n$. We would like to determine whether the subring $\mathcal{R}$ of $\mathbb{R}$ generated by the elements $c_i^{\alpha}$ over all $i=1,\dotsc,n$ and $\alpha\in\Sigma$ is a free finitely-generated $\mathbb{Z}$-module with a power basis, i.e., a basis of the form $\{1,\beta,\beta^2,\dotsc,\beta^{l-1}\}$ for some $l\geq 1$, where $\beta\in\mathcal{R}$. As $c_i^{\alpha_i}=1$, the ring $\mathcal{R}$ must contain $1$. If $\alpha$ is not a simple root, then \cref{lem:root-conjugation} and \cref{thm:generation-by-simple-reflections} imply that $\alpha=s_{i_1}\dotsb s_{i_k}(\alpha_j)$, where $\alpha_j\in\Delta$ and the $s_{i_r}$ are simple reflections. Thus, $\mathcal{R}$ is the unital subring of $\mathbb{R}$ generated by the elements $\alpha_i^\vee(\alpha_j)$ over all $\alpha_i,\alpha_j\in\Delta$.

For the remainder of this paper, we will work under the following Assumption:
\begin{assumption}\label{assumption:geometric-realization}
$W$ is a real finite reflection group, and $(\Sigma,\Delta)$ is a geometric realization of $W$ in $V$, such that $\mathcal{R}$ is a free finitely-generetated $\mathbb{Z}$-module with a power basis.
\end{assumption} 
We will show the existence of a geometric realization for $W$ satisfying the conditions of \cref{assumption:geometric-realization} for every real finite reflection group $W$. If $W$ is a Weyl group, then $\mathcal{R}=\mathbb{Z}$, since the elements $\alpha_i^\vee(\alpha_j)\in\mathbb{Z}$ for all $\alpha_i,\alpha_j\in\Delta$.

The geometric realizations of $W=H_3$ and $W=H_4$ that follow are taken from \cite[\S 2.13]{H90}, and the geometric realizations of $W=I_2(m)$, $m\geq 3$, that follow are taken from \cite[Ch. \S 1.1]{H90}. First, we will consider the case $W=H_4$. In this case, $V=\mathbb{R}^4$. Set $a:=\tfrac{1+\sqrt{5}}{4}$ and $b:=\tfrac{-1+\sqrt{5}}{4}$. Let $(\Sigma,\Delta)$ be the geometric realization of $W=H_4$ in $V$ with simple roots:
\[\alpha_1=\left(a,-\tfrac{1}{2},b,0\right);\quad \alpha_2=\left(-a,\tfrac{1}{2},b,0\right);\quad \alpha_3=\left(\tfrac{1}{2},b,-a,0\right);\quad \alpha_4=\left(-\tfrac{1}{2},-a,0,b\right).\]
A computation gives the following result:
\[\alpha_i^\vee(\alpha_j)=\begin{cases}
2,\quad \textcolor{white}{o}\text{     if $i=j$};\\
-\tau,\quad \text{if $(i,j)\in \{(1,2),(2,1)\}$};\\
-1,\quad \text{if $(i,j)\in\{(2,3),(3,2),(3,4),(4,3)\}$};\\
0,\quad \textcolor{white}{o}\text{     if $(i,j)\in\{(1,3),(3,1),(1,4),(4,1),(2,4),(4,2)\}$},
\end{cases}\]
where $\tau=\tfrac{1+\sqrt{5}}{2}$ is the well-known \textit{golden section}. The constant $\tau$ is a root of the characteristic equation $x^2-x-1=0$. Thus, we may take $\mathcal{R}=\mathbb{Z}[\tau]$, which has a power basis $\{1,\tau\}$ over $\mathbb{Z}$.

Now we will consider the case $W=H_3$. In this case, $V=\mathbb{R}^3$. Let $(\Sigma,\Delta)$ be the geometric realization of $W=H_3$ in $V$ with simple roots:
\[\alpha_1=\left(a,-\tfrac{1}{2},b\right);\quad \alpha_2=\left(-a,\tfrac{1}{2},b\right);\quad \alpha_3=\left(\tfrac{1}{2},b,-a\right).\] 
By the earlier computation, we may take $\mathcal{R}=\mathbb{Z}[\tau]$ in this case as well.

Next we will consider the dihedral groups $W=I_2(m)$, where $m\geq 3$. In this case, $V=\mathbb{R}^2$. Let $(\Sigma,\Delta)$ be the geometric realization of $W=I_2(m)$ in $V$ with simple roots:
\[\alpha_1=(-1,0);\quad \alpha_2=\left(\textrm{cos}\left(\tfrac{\pi}{m}\right),\textrm{sin}\left(\tfrac{\pi}{m}\right)\right).\]
A computation gives the following result: 
\[\alpha_i^\vee(\alpha_j)=\begin{cases}
2,\quad\quad\quad\quad\textcolor{white}{....} \text{if $i=j$};\\
-2\textrm{cos}\left(\tfrac{\pi}{m}\right),\quad\text{if $(i,j)\in\{(1,2),(2,1)\}$}.
\end{cases}\]
Thus, $\mathcal{R}=\mathbb{Z}[2\textrm{cos}(\tfrac{\pi}{m})]$, and $\mathcal{R}$ is indeed a free finitely-generated $\mathbb{Z}$-module with a power basis. To see this, we first note that $\left(\textrm{cos}(\tfrac{\pi}{m}),\textrm{sin}(\tfrac{\pi}{m})\right)=\left(\textrm{cos}(\tfrac{2\pi}{2m}),\textrm{sin}(\tfrac{2\pi}{2m})\right)$ is a primitive $2m$-th root of unity. Thus, by \cite{YY16}, $\mathcal{R}$ is the ring of integers of the number field $\mathbb{Q}(2\textrm{cos}(\tfrac{\pi}{m}))$. It is well known that the ring of integers $\mathcal{O}_K$ of a number field $K$ of degree $l$ has a power basis $\{1,\beta,\beta^2,\dotsc,\beta^{l-1}\}$, whenever $\mathcal{O}_K=\mathbb{Z}[\beta]$ for some $\beta\in\mathcal{O}_K$. Therefore, \[\left\{1,2\mathrm{cos}\left(\tfrac{\pi}{m}\right),\left(2\mathrm{cos}\left(\tfrac{\pi}{m}\right)\right)^2,\dotsc,\left(2\mathrm{cos}\left(\tfrac{\pi}{m}\right)\right)^{l-1}\right\}\] is a $\mathbb{Z}$-basis of $\mathcal{R}$, where $l$ is the degree of the field extension $\mathbb{Q}\subseteq \mathbb{Q}(2\mathrm{cos}(\tfrac{\pi}{m}))$. In \cite[Thm. 1]{L33}, it is shown that $l=\tfrac{\phi(2m)}{2}$, where $\phi$ is the Euler totient function, and that the minimal polynomial $\psi_{m}(x)$ of $2\mathrm{cos}(\tfrac{\pi}{m})$ over $\mathbb{Q}$ satisfies \[x^{-\tfrac{\phi(2m)}{2}}Q_{2m}(x)=\psi_{m}(x+x^{-1}),\] where $Q_{2m}$ is the $2m$-th cyclotomic polynomial.

We now go over four concrete examples of the geometric realization of dihedral groups  constructed above. Some of the formulas we compute in these examples are used in \cref{section:applications}.

\begin{ex}\label{ex:I2(3)}
	Suppose $m=3$. 
	Since $2\textrm{cos}\left(\tfrac{\pi}{3}\right)=1$, we have $\mathcal{R}=\mathbb{Z}$. Clearly, $\{1\}$ is a $\mathbb{Z}$-basis of $\mathcal{R}$.
	The roots of $\Sigma$ are:
	\[\alpha_1;\quad s_2(\alpha_1)=\alpha_1+\alpha_2; \quad s_1s_2(\alpha_1)=\alpha_2;\]
	\[s_2s_1s_2(\alpha_1)=-\alpha_2;\quad s_1s_2s_1s_2(\alpha_1)=-\alpha_1-\alpha_2;\quad s_2s_1s_2s_1s_2(\alpha_1)=-\alpha_1.\]
\end{ex}
\begin{ex}{\label{ex:I2(4)}}
	Suppose $m=4$. 
	Since $2\textrm{cos}\left(\tfrac{\pi}{4}\right)=\sqrt{2}$, we have $\mathcal{R}=\mathbb{Z}[\sqrt{2}]$. Moreover, $\{1,\sqrt{2}\}$ is a $\mathbb{Z}$-basis of $\mathcal{R}$, since $\sqrt{2}$ is a root of the irreducible quadratic polynomial $x^2-2$ over $\mathbb{Q}$.
	The roots of $\Sigma$ are:
	\[\alpha_1;\quad s_2(\alpha_1)=\alpha_1+\sqrt{2}\alpha_2;\quad s_1s_2s_1s_2(\alpha_1)=-\alpha_1;\quad s_2s_1s_2s_1s_2(\alpha_1)=-\alpha_1-\sqrt{2}\alpha_2;\]
	\[\alpha_2;\quad s_1(\alpha_2)=\sqrt{2}\alpha_1+\alpha_2;\quad s_2s_1s_2s_1(\alpha_2)=-\alpha_2;\quad s_1s_2s_1s_2s_1(\alpha_2)=-\sqrt{2}\alpha_1-\alpha_2.\]
\end{ex}
\begin{ex}{\label{ex:I2(5)}}
	Suppose $m=5$. 
	Since $2\textrm{cos}\left(\tfrac{\pi}{5}\right)=\tfrac{1+\sqrt{5}}{2}=\tau$ (this is the golden section, which was mentioned earlier), we have $\mathcal{R}=\mathbb{Z}[\tau]$. Moreover, $\{1,\tau\}$ is a $\mathbb{Z}$-basis of $\mathcal{R}$, since $\tau$ is a root of the irreducible quadratic polynomial $x^2-x-1$ over $\mathbb{Q}$.
	The roots of $\Sigma$ are:
	\[\alpha_1;\quad s_2(\alpha_1)=\alpha_1+\tau\alpha_2;\quad s_1s_2(\alpha_1)=\tau\alpha_1+\tau\alpha_2;\quad s_2s_1s_2(\alpha_1)=\tau\alpha_1+\alpha_2;\]
	\[s_{1,2,\dotsc}^{(4)}(\alpha_1)=\alpha_2; \quad s_{2,1,\dotsc}^{(5)}(\alpha_1)=-\alpha_2;\quad s_{1,2,\dotsc}^{(6)}(\alpha_1)=-\tau\alpha_1-\alpha_2;\]
	\[ s_{2,1,\dotsc}^{(7)}(\alpha_1)=-\tau\alpha_1-\tau\alpha_2;\quad s_{1,2,\dotsc}^{(8)}(\alpha_1)=-\alpha_1-\tau\alpha_2; \quad s_{2,1,\dotsc}^{(9)}(\alpha_1)=-\alpha_1.\]
\end{ex}
\begin{ex}{\label{ex:I2(7)}}
	Suppose $m=7$. 
	Set $\zeta=2\textrm{cos}\left(\tfrac{\pi}{7}\right)$. We have $\mathcal{R}=\mathbb{Z}[\zeta]$. Moreover, $\{1,\zeta,\zeta^2\}$ is a $\mathbb{Z}$-basis of $\mathcal{R}$, since $\zeta$ is a root of the irreducible cubic polynomial $x^3-x^2-2x+1$ over $\mathbb{Q}$.
	The roots of $\Sigma$ are:
	\[\alpha_1;\quad s_2(\alpha_1)=\alpha_1+\zeta\alpha_2;\quad s_1s_2(\alpha_1)=(\zeta^2-1)\alpha_1+\zeta\alpha_2;\quad s_2s_1s_2(\alpha_1)=(\zeta^2-1)\alpha_1+(\zeta^2-1)\alpha_2;\]\[ s_{1,2,\dotsc}^{(4)}(\alpha_1)=\zeta\alpha_1+(\zeta^2-1)\alpha_2;
	\quad s_{2,1,\dotsc}^{(5)}(\alpha_1)=\zeta\alpha_1+\alpha_2;\quad s_{1,2,\dotsc}^{(6)}(\alpha_1)=\alpha_2;\quad s_{2,1,\dotsc}^{(7)}(\alpha_1)=-\alpha_2; 
	\]
	\[s_{1,2,\dotsc}^{(8)}(\alpha_1)=-\zeta\alpha_1-\alpha_2; \quad s_{2,1,\dotsc}^{(9)}(\alpha_1)=-\zeta\alpha_2+(1-\zeta^2)\alpha_2;
	\quad
	 s_{1,2,\dotsc}^{(10)}(\alpha_1)=(1-\zeta^2)\alpha_1+(1-\zeta^2)\alpha_2;\]\[\quad s_{2,1,\dotsc}^{(11)}(\alpha_1)=(1-\zeta^2)\alpha_1-\zeta\alpha_2; \quad
	 s_{1,2,\dotsc}^{(12)}(\alpha_1)=-\alpha_1-\zeta\alpha_2;\quad s_{2,1,\dotsc}^{(13)}(\alpha_1)=-\alpha_1.\]
\end{ex}

For the remainder of this paper, we will work under the following Assumption:
\begin{assumption}\label{assumption:e1}
 $B=\{e_1,\dotsc,e_l\}$ is a power basis of $\mathcal{R}$, and $e_1=1$.
\end{assumption} 

\begin{dfn}
	Let $\Lambda$ be the $\mathcal{R}$-module generated by the roots $\alpha\in\Sigma$. We call $\Lambda$ a \textit{real root lattice}. If $\Sigma$ is crystallographic, then we call $\Lambda$ a \textit{crystallographic root lattice}.
\end{dfn}

The group $\Lambda$ is a free finitely-generated $\mathbb{Z}$-module of rank $nl$. Since $\Delta$ is an $\mathcal{R}$-basis of $\Lambda$, and $B$ is a $\mathbb{Z}$-basis of $\mathcal{R}$, we take $\{e_i\alpha_j\}$ as a $\mathbb{Z}$-basis of $\Lambda$. Moreover, since $e_1=1$, this basis contains $\Delta$. 

As $W$ acts on $\Sigma$ by permuting the roots, there is a natural action of $W$ on $\Lambda$ given by $w\cdot(r \alpha)=r (w(\alpha))$ for all $w\in W$, $\alpha\in \Sigma$, and $r\in \mathcal{R}$. By linearity, the action of $w\in W$ on $\Lambda$ is a group homomorphism.
\section{Formal group laws and the classical formal group ring}{\label{section:regularity}}
In this section, we recall the definition of a one-dimensional commutative formal group law, and we discuss several of its properties. We then analyze the formal group ring $\widetilde{R\llbracket\Lambda\rrbracket}_F$ of \cite{CPZ} associated to the real root lattice $\Lambda$ and the formal group law $(R,F)$.

\begin{dfn}{\textup{(see} \cite[pp. 4]{LM07}\textup{)}}
	A one-dimensional commutative formal group law $(R,F)$ over a commutative unital ring $R$ is a power series $F(u,v)\in R\llbracket u,v\rrbracket$ satisfying the following axioms:
	\begin{enumerate}
		\item $F(u,0)=F(0,u)=u\in R\llbracket u\rrbracket$;
		\item $F(u,v)=F(v,u)$;
		\item $F(u,F(v,w))=F(F(u,v),w)\in R\llbracket u,v,w\rrbracket$.
	\end{enumerate}
\end{dfn}

By \cite[Ch. I, \S 3, Prop. 1]{F68}, given a formal group law $(R,F)$, there is a unique power series $i(u)\in R\llbracket u\rrbracket$ such that $F(u,i(u))=F(i(u),u)=0$. The series $i(u)$ is called the \textit{formal inverse} of $u$ with respect to the formal group law $(R,F)$.

Following \cite[pp. 3]{BB10}, a \textit{morphism} $f\colon(R,F)\to (R,F')$ of formal group laws over $R$ is a power series $f(u)\in R\llbracket u\rrbracket$ such that $f(u+_F v)=f(u)+_{F'}f(v)$ and $f(0)=0$. The morphism $f$ is called a \textit{strong isomorphism} if its formal derivative satisfies $f'(0)=1$. The condition $f'(0)=1$ implies that there is a morphism $f^{-1}:(R,F')\to (R,F)$ such that $f(f^{-1}(u))=f^{-1}(f(u))=u$.

Let $(\mathbb{C},F)$ be a one-dimensional commutative formal group law over $\mathbb{C}$. Write $F(u,v)=u+v+\sum_{i,j\geq1}t_{i,j}u^iv^j$, where $t_{i,j}\in \mathbb{C}$, and suppose $(\mathbb{C},F_a)$ is the additive formal group law over $\mathbb{C}$, i.e., $F_a(u,v)=u+v$. In \cite[Ch. IV.5]{S09}, the author constructs a strong isomorphism of formal group laws $\mathrm{log}_{F}:(\mathbb{C},F)\to (\mathbb{C},F_a)$ called the \textit{logarithm} of $(\mathbb{C},F)$. The inverse of the logarithm is called the \textit{exponential} of $(\mathbb{C},F)$ and is denoted $\mathrm{exp}_F$. In \cite[Ch. IV.4]{S09}, it is shown that $\mathrm{log}_{F}(u)=\int \omega(u)$, where 
$\omega(v)=F_u(0,v)dv$
is the \textit{normalized invariant differential} of $(\mathbb{C},F)$ and $F_u(x,y)$ is the partial derivative of $F(u,v)$ with respect to $u$, evaluated at $(x,y)$.
There exist $a_i,b_i\in \mathbb{C}$, $i\geq 2$, such that
\[\mathrm{exp}_{F}(u)=u+\sum_{i\geq 2}a_iu^{i};\qquad \mathrm{log}_{F}(u)=u+\sum_{i\geq 2}b_iu^{i}.\]
Note that $\mathrm{exp}_{F_a}(u)=u$ and $\mathrm{log}_{F_a}(u)=u$. Set $C_F:=\{a_{i}\}_{i\geq 2}\cup\{b_{i}\}_{i\geq 2}\cup\{t_{i,j}\}_{i,j\geq 1}$. 
\begin{dfn}{\label{dfn:ample-ring}}
	If $R$ is any subring of $\mathbb{C}$ containing $C_F$, then we call $R$ an \textit{ample ring} with respect to the formal group law $(\mathbb{C},F)$.
\end{dfn}
If $R$ is an ample ring with respect to $(\mathbb{C},F)$, then we may view $(\mathbb{C},F)$ as a formal group law $(R,F)$ over $R$. In particular, there is a strong isomorphism of formal group laws $\mathrm{log}_{F}:(R,F)\to (R,F_a)$, with inverse $\mathrm{exp}_{F}:(R,F_a)\to (R,F)$, induced by the logarithm and exponential of $(\mathbb{C},F)$. We call $\mathrm{log}_{F}:(R,F)\to (R,F_a)$ and $\mathrm{exp}_{F}:(R,F_a)\to (R,F)$ the \textit{logarithm} and \textit{exponential} of $(R,F)$, respectively.

\begin{ex}{(\text{see} \cite[Ex.~1.1.4]{LM07})}
	The \textit{additive} formal group law $(R,F_a)$ over $R$ is given by $F_a(x,y)=x+y$.
	The formal inverse of $x$ under $(R,F_a)$ is $i(x)=-x$.
	
	If $R$ is an ample ring with respect to $(\mathbb{C},F_a)$, then the logarithm of $(R,F_a)$ is $\mathrm{log}_{F_a}(x)=x$, and the exponential of $(R,F_a)$ is $\mathrm{exp}_{F_a}(x)=x$.
\end{ex}
\begin{ex}{(\text{see} \cite[Ex.~1.1.5]{LM07} and \cite[Ch. IV, \S 9, Ex.~5.1]{S09})}
	The \textit{multiplicative} formal group law $(R,F_m)$ over $R$ is given by $F_m(x,y)=x+y+xy$. 	
	The formal inverse of $x$ under $(R,F_m)$ is $i(x)=\tfrac{-x}{1- x}:=-x\sum_{i\geq0} x^i$. 
	
	If $R$ is an ample ring with respect to $(\mathbb{C},F_m)$, then the logarithm and exponential of $(R,F_m)$ are given by the formulas
	\[\mathrm{log}_{F_m}(x)=\mathrm{log}(1- x)=\sum_{i\geq 1}(-1)^{i-1}\tfrac{x^i}{i};\quad \mathrm{exp}_{F_m}(x)=\mathrm{exp}(x)=\sum_{i\geq 1}\tfrac{x^i}{i!}.\]
\end{ex}
\begin{ex}{\textup{(see} \cite[Ch. IV, \S 9, Ex.~3.4(3)]{S})}
	The \textit{Lorentz} formal group law $(R,F_l)$ over $R$ is given by 
	$F_l(x,y)=\tfrac{x+y}{1+ xy}:=(x+y)\sum_{i\geq 2}(xy)^i$.
	The formal inverse of $x$ under $(R,F_l)$ is $i(x)=-x$.
	
	If $R$ is an ample ring with respect to $(\mathbb{C},F_l)$, then the logarithm and exponential of $(R,F_l)$ are given by the formulas \[\mathrm{log}_{F_l}(x)=\mathrm{tanh^{-1}}(x)=\sum_{i\geq 1}\tfrac{x^{2i-1}}{2i-1};\quad \mathrm{exp}_{F_l}(x)=\mathrm{tanh}(x)=\sum_{i\geq 1}\tfrac{2^{2i}(2^{2i}-1)B_{2i}}{(2i)!}x^{2i-1},\]
	where $B_{2i}$ is the $2i$-th Bernoulli number.
\end{ex}
\begin{ex}{\textup{(see} \cite[\S 1.1]{LM07}\textup{)}}
	Let $\mathbb{L}$ be \textit{Lazard ring}, i.e., the commutative unital ring generated by some elements $r_{i,j}$, $i,j\geq 1$, subject only to the relations imposed by the definition of the formal group law. 
	The \textit{universal} formal group law $(\mathbb{L},F_u)$ over the Lazard ring $\mathbb{L}$ is given by $F_u(x,y)=x+y+\sum_{i,j\geq 1}r_{i,j}x^iy^j$. The formal inverse of $x$ under $(\mathbb{L},F_u)$ is a power series $i(x)=-x-c_2x^2-c_3x^3-\dotsb\in\mathbb{L}\llbracket x\rrbracket$, where the $c_i$ can be computed explicitly in terms of the $r_{i,j}$ from the relation $F_u(x,i(x))=0$.
\end{ex}

\begin{dfn}{\label{dfn:additive-type-FGL}}
	Let $(R,F)$ be a formal group law of the form 
	\[F(x,y)=(x+y)g(x,y),\quad g(x,y)\in R\llbracket x,y\rrbracket.\]
	We say that $(R,F)$ is of \textit{additive type}. In particular, the formal inverse of $x$ under $(R,F)$ is $i(x)=-x$. 
\end{dfn}

\begin{ex}
	The additive formal group law and the Lorentz formal group law are examples of formal group laws of additive type.
\end{ex}

\details{We now provide a brief overview of topological groups and rings.
\begin{itemize}
	\item We say that $G$ is a \textit{topological group} if it is simultaneously a group and a topological space, and if the group operation $+:G\times G\to G$ and the inversion map $^{-1}:G\to G$ are continuous. 
	\item We say that $S$ is a \textit{topological ring} if it is simultaneously a ring and a topological space, and if addition $+:S\times S\to S$ and multiplication $\cdot :S\times S\to S$ are continuous. Note that every topological ring is a topological group.
	\item A ring homomorphism between topological rings $f: S\to S'$ is a \textit{topological ring homomorphism} if it is continuous. If $f$ is a ring isomorphism with continous inverse, then $f$ is a \textit{topological ring isomorphism}.
	\item  Every topological group is a \textit{uniform space}. We say that a topological group $G$ is \textit{complete} if it is complete as a uniform space. We say that a topological ring $S$ is \textit{complete} if it is complete as a topological group with respect to addition.
	\item  Let $J$ be any ideal in $S$. Then its topological closure $\hat{J}$ is also an ideal in $S$.
\end{itemize} }

We now state and prove some standard facts about topological rings.

\begin{lem}{\label{lem:closure}}
	Let $S$ be a topological ring, and let $I$ be an ideal in $S$. The topological closure $\hat{I}$ of $I$ in $S$ is an ideal in $S$.
\end{lem}
\begin{proof}
	Let $x,y\in \hat{I}$ and $s\in S$. The difference $x-y\in \hat{I}$ by the proof of \cite[Ch. III, \S 2.1, Prop. 1]{B89}. To see that $sx\in\hat{I}$, let $U$ be a neighbourhood of $sx$. Since $x\in\hat{I}$, every neighbourhood $X$ of $x$ intersects $I$ nontrivially. The inverse image of $U$ under the multiplication map $(a,b)\mapsto ab$ is open in $S\times S$ by continuity of multiplication, and contains $(s,x)$. Thus, there is a neighbourhood $X$ of $x$ such that $(s\cdot X)\cap I$ is nonempty and $(s\cdot X)\subseteq U$. So $U\cap I$ is nonempty, as required.
\end{proof}
 
\begin{thm}{\label{thm:quotient-by-closed-ideal}}
	Suppose $S$ is a metrizable complete Hausdorff topological ring, and $I$ is a closed ideal in $S$. Then the quotient $S/I$ is a complete Hausdorff ring. 
\end{thm}
\begin{proof}
Since $S$ is a commutative metrizable complete Hausdorff topological group, and $I$ is a closed normal subgroup of $S$, the quotient group $S/I$ is Hausdorff by \cite[Ch. III, \S2, Prop. 18]{B89} and complete by \cite[Ch. IX, \S 3, Prop. 4]{B66}. Thus, the quotient $S/I$ is a complete Hausdorff topological ring. 
\end{proof}

\begin{rem}{\textup{(see} \cite[\S 10]{AM69}\textup{)}}
Let $S$ be a ring, and let $I$ be an ideal in $S$. The \textit{$I$-adic} topology on $S$ is the topology generated by elements of the form $x+I^n$, where $x\in S$ and $n\geq 1$. In particular, $S$ is first-countable with respect to this topology. If $S$ is Hausdorff, then Urysohn's metrization theorem implies that $S$ is metrizable. If, in addition to being Hausdorff, $S$ is complete, then \cref{thm:quotient-by-closed-ideal} implies that $S/J$ is Hausdorff and complete, where $J$ is any closed ideal in $S$. The ring $S$ is complete in the $I$-adic topology if and only if the canonical ring homomorphism $S\to  \lim\limits_{\stackrel{\longleftarrow}{i}}(S/I^i)$ is a ring isomorphism, and $S$ is Hausdorff in the $I$-adic topology if and only if $\cap_{i\geq 0}I^i=(0)$. 
\end{rem}

The following construction of the formal group ring was introduced in \cite[\S 2]{CPZ}.
Let $R$ be a commutative unital ring and $(R,F)$ a one-dimensional commutative formal group law over $R$. Let $R[x_\Lambda]$ be the polynomial ring over $R$ with variables indexed by elements of the real root lattice $\Lambda$. Here, we view $\Lambda$ as a free finitely-generated $\mathbb{Z}$-module with basis $\{e_i\alpha_j\}$. The augmentation map $\epsilon: R[x_\Lambda]\to R$ sends $x_\lambda\mapsto0$ for each $\lambda\in\Lambda$ and fixes $R$. Let $R\llbracket x_\Lambda\rrbracket$ be the $\mathrm{ker}(\epsilon)$-adic completion of the polynomial ring $R[x_\Lambda]$. Given $u,v\in R\llbracket x_\Lambda\rrbracket$, $m\in\mathbb{Z}_{\geq 0}$, we define the following notation:
\[u+_Fv=F(u,v); \quad m\cdot_Fu:=\underbrace{u+_F+_F\dotsb+_Fu}_{\text{$m$ times}};\quad (-m)\cdot_F u=-_F(m\cdot_F u),\]
where $-_F u$ is the formal inverse of $u$ under $(R,F)$.

 \details{\textcolor{red}{New definition of relations:}} Let $\widetilde{\mathcal{J}}_F$ be the closure of the ideal in $R\llbracket x_\Lambda\rrbracket$ generated by the elements 
\[x_0 \quad \text{and}\quad x_{\lambda_1+\lambda_2}-(x_{\lambda_1}+_Fx_{\lambda_2}), \]
over all $\lambda_1,\lambda_2\in\Lambda$.

\begin{dfn}
The quotient
\[\widetilde{R\llbracket\Lambda\rrbracket}_F:=R\llbracket x_\Lambda\rrbracket/\widetilde{\mathcal{J}}_F\]
is the formal group ring of \cite[Def. 2.4]{CPZ}. For the remainder of this paper, we call $\widetilde{R\llbracket\Lambda\rrbracket}_F$ the \textit{classical formal group ring}. 
\end{dfn}

\begin{rem}
	If $\Sigma$ is crystallographic, then $\Lambda$ is a crystallographic root lattice, and $\widetilde{R\llbracket\Lambda\rrbracket}_F$ is a special case of the \textit{formal group ring} studied in \cite{CZZ1}.
\end{rem}

We denote the image of $x_\lambda$ in $\widetilde{R\llbracket \Lambda\rrbracket}_F$ by the same symbol. 
We will now review several facts about the classical formal group ring.

The ring $R\llbracket x_\Lambda\rrbracket$ is a complete Hausdorff ring with respect to the $\mathrm{ker}(\epsilon)$-adic topology, and $\widetilde{\mathcal{J}}_F$ is contained in $\mathrm{ker}(\epsilon)$. Thus, \cref{lem:closure} and \cref{thm:quotient-by-closed-ideal} imply that the classical formal group ring $\widetilde{R\llbracket\Lambda\rrbracket}_F$ is a complete Hausdorff ring with respect to the $\widetilde{\mathcal{I}}_F$-adic topology, where $\widetilde{\mathcal{I}}_F$ is the kernel of the induced augmentation map $\widetilde{R\llbracket\Lambda\rrbracket}_F\to R$.
By \cite[Cor. 2.13]{CPZ}, there is a continuous ring isomorphism $\varrho\colon  \widetilde{R\llbracket\Lambda\rrbracket}_F\to R\llbracket x_{1},\dotsc,x_{nl}\rrbracket$ such that
\begin{align}\label{equation:iso}
\varrho\left(x_{\sum_{i=1}^l\sum_{j=1}^nm_{i,j}e_i \alpha_j}\right) =&
(m_{1,1}\cdot_F x_1+_F\dotsb+_F m_{1,n} \cdot_F x_n)+_F\dotsb\\&\nonumber\dotsb+_F(m_{l,1}\cdot_F x_{n(l-1)+1}+_F\dotsb+_F m_{l,n}\cdot_F x_{nl}).
\end{align}
In particular, the isomorphism sends $x_{e_i\alpha_j}\mapsto x_{(i-1)n+j}$. If $R$ is an integral domain, then so is $\widetilde{R\llbracket\Lambda\rrbracket}_F$.

Since $W$ acts on $\Lambda$, it follows from \cite[Lem. 2.6]{CPZ} that $W$ also acts on the classical formal group ring by 
\begin{equation}\label{eqn:w-action}
	w(x_\lambda)=x_{w(\lambda)},\quad w\in W, \text{ }\lambda\in\Lambda.
\end{equation} 
In fact, since each $w\in W$ sends $\widetilde{\mathcal{I}}_F$ to itself, the action of $w\in W$ on $\widetilde{R\llbracket\Lambda\rrbracket}_F$ is a \textit{continuous} ring homomorphism.

Let $(R,F')$ be another formal group law over $R$. 
By \cite[Lem. 2.6]{CPZ}, any morphism $f\colon(R,F)\to (R,F')$ of formal group laws over $R$ induces a continuous ring homomorphism $f^*\colon\widetilde{R\llbracket\Lambda\rrbracket}_{F'}\to \widetilde{R\llbracket\Lambda\rrbracket}_F$ sending $x_\lambda$ to $f(x_\lambda)$. To see that $f^*$ is well-defined, note that
\begin{align*}
f^*(x_{\lambda+\mu})=f(x_{\lambda+\mu})&=f(x_\lambda+_{F'}x_\mu)=f(x_\lambda)+_Ff(x_\mu)\\&=f^*(x_\lambda)+_Ff^*(x_\mu)=f^*(x_\lambda+_Fx_\mu)=f^*(x_{\lambda+\mu}),
\end{align*}
for all $\lambda$, $\mu\in\Lambda$.
In addition, $f^*$ is $W$-equivariant: write $f(u)=\sum_{i\geq 1}a_{i}u^i$ for some $a_i\in R$. Given $w\in W$ and $\lambda\in\Lambda$, we have 
\[f^*(w\cdot x_\lambda)=f^*(x_{w(\lambda)})=\sum_{i\geq 1}a_i x_{w(\lambda)}^i= \sum_{i\geq 1}w\cdot (a_ix_{\lambda}^i)=w\cdot f^*(x_\lambda).\]
If $(\mathbb{C},F)$ is a formal group law over $\mathbb{C}$, and $R$ is an ample ring with respect to $(\mathbb{C},F)$, then the exponential of the formal group law $(R,F)$ induces a $W$-equivariant continuous ring isomorphism \[\mathrm{exp}_F^*\colon \widetilde{R\llbracket\Lambda\rrbracket}_F\to\widetilde{R\llbracket\Lambda\rrbracket}_{F_a},\quad  {\mathrm{exp}}_F^*(x_\lambda)=\mathrm{exp}_F(x_{\lambda}),\quad \lambda\in\Lambda.\] This is summarized by the following commutative diagram, which commutes for any $w\in W$: 

\begin{equation}\label{eqn:W-equivariance}
\begin{tikzcd}
\widetilde{R\llbracket\Lambda\rrbracket}_F \arrow[r,"\mathrm{exp}_F^*"] \arrow[d,"w"'] & \widetilde{R\llbracket\Lambda\rrbracket}_{F_a} \arrow[d,"w"] \\
\widetilde{R\llbracket\Lambda\rrbracket}_F \arrow[r,"\mathrm{exp}_F^*"] & \widetilde{R\llbracket\Lambda\rrbracket}_{F_a}.
\end{tikzcd}
\end{equation}

\begin{rem}\label{rem:explanation}
When $\Sigma$ is crystallographic, it is shown in \cite[Cor. 3.4]{CPZ} that $x_{\alpha}$ divides the element $u-s_\alpha(u)$ for all $u\in \widetilde{R\llbracket\Lambda\rrbracket}_F$ and $\alpha\in\Sigma$. Therefore, one can define a \textit{formal Demazure operator} $\Delta_\alpha$ on $\widetilde{R\llbracket\Lambda\rrbracket}_F$ by the formula
	\[\Delta_\alpha(u)=\tfrac{u-s_\alpha(u)}{x_\alpha},\quad u\in \widetilde{R\llbracket\Lambda\rrbracket}_F.\]
	However, when $\Sigma$ is noncrystallographic, it is not necessarily true that $x_{\alpha}$ divides the element $u-s_\alpha(u)$ for all $u\in \widetilde{R\llbracket\Lambda\rrbracket}_F$ and $\alpha\in\Sigma$. We provide an example to demonstrate this. Suppose $(R,F_a)$ is the additive formal group law over a commutative unital ring $R$, the reflection group $W=I_2(5)$, and $(\Sigma,\Delta)$ is the geometric realization of $I_2(5)$ given in \cref{ex:I2(5)}. Then, in $\widetilde{R\llbracket\Lambda\rrbracket}_{F_a}$, we have
	\begin{align*}
	x_{\alpha_1}-s_{\alpha_2}(x_{\alpha_1})=x_{\alpha_1}-x_{\alpha_1+\tau\alpha_2}&=x_{\alpha_1}-(x_{\alpha_1}+x_{\tau\alpha_2})=-x_{\tau\alpha_2}.
	\end{align*}
	If $x_{\alpha_2}$ divides $x_{\alpha_1}-s_{\alpha_2}(x_{\alpha_1})=-x_{\tau\alpha_2}$ in $\widetilde{R\llbracket\Lambda\rrbracket}_{F_a}$, then the isomorphism ${\widetilde{R\llbracket\Lambda\rrbracket}_{F_a}}\simeq R\llbracket x_1,\dotsc,x_{4}\rrbracket$ of \cref{equation:iso} implies that $x_2$ divides $-x_4$ in $R\llbracket x_1,\dotsc,x_{4}\rrbracket$, which is a contradiction. 
	
	The focus of the next two sections is to resolve this issue. Let $W$ be any real finite reflection group, and let $R$ be subring of $\mathbb{C}$ satisfying the hypotheses of \cref{assumption-ring}. In \cref{section:formal-group-algebra}, we show that there is a closed ideal $\mathcal{J}_F$ of $R\llbracket x_\Lambda\rrbracket$, such that the quotient $R\llbracket \Lambda\rrbracket_F:=R\llbracket x_\Lambda\rrbracket/\mathcal{J}_F$ is an integral domain, and such that there is a well-defined $W$-action on $R\llbracket \Lambda\rrbracket_F$, and such that $x_{\alpha_j}$ divides $x_{e_i \alpha_j}$ in $R\llbracket \Lambda\rrbracket_F$ for all $e_i\in B$ and $\alpha_j\in\Delta$ (see \cref{lem:same-properties}). Here, we are denoting the image of $x_\lambda$ in $R\llbracket \Lambda\rrbracket_F$ by the same symbol. In \cref{section:localization}, we show that $x_{\alpha}$ divides $u-s_{\alpha}(u)$ in $R\llbracket \Lambda\rrbracket_F$ for all $u\in R\llbracket \Lambda\rrbracket_F$ and $\alpha\in\Sigma$, using the fact that $x_{\alpha_j}$ divides $x_{e_i \alpha_j}$ for all $e_i\in B$ and $\alpha_j\in\Delta$. (see \cref{lem:divisible}). This allows us to define the formal Demazure operator on $R\llbracket \Lambda\rrbracket_F$ (see \cref{dfn:formal-Demazure-operator}). We call $R\llbracket \Lambda\rrbracket_F$ the \textit{formal group ring} with respect to $(R,F)$ and $\Lambda$. When $\Sigma$ is crystallographic, $R\llbracket \Lambda\rrbracket_F$ collapses to classical formal group ring $\widetilde{R\llbracket\Lambda\rrbracket}_F$ with respect to the crystallographic root lattice $\Lambda$.
\end{rem}

\section{Construction of the formal group ring}\label{section:formal-group-algebra}

The purpose of this section is to construct the formal group ring $R\llbracket \Lambda\rrbracket_F$ for all real finite reflection groups. This construction will allow us to define formal Demazure operators on $R\llbracket \Lambda\rrbracket_F$ in \cref{section:localization}. See \cref{rem:explanation} for a detailed motivation for this construction.

From here up to \cref{assumption-ring}, we assume that $\Sigma$ is a noncrystallographic root system. Let $(\mathbb{C},F)$ be a formal group law over $\mathbb{C}$, and let $R$ be an ample ring with respect to $(\mathbb{C},F)$. Furthermore, assume that $R$ contains $\mathcal{R}$. The exponential $\mathrm{exp}_F$ (resp. logarithm $\mathrm{log}_F$) of $(R,F)$ induces a $W$-equivariant continuous ring isomorphism $\mathrm{exp}_F^*:\widetilde{R\llbracket\Lambda\rrbracket}_{F}\to \widetilde{R\llbracket\Lambda\rrbracket}_{F_a}$ (resp. $\mathrm{log}_F^*:\widetilde{R\llbracket\Lambda\rrbracket}_{F_a}\to \widetilde{R\llbracket\Lambda\rrbracket}_{F}$). The isomorphisms $\mathrm{exp}_F^*$ and $\mathrm{log}_F^*$ are inverse to each other. Since $R$ contains $\mathcal{R}$, we let ${\mathcal{J}}^\star_F$ be the closure of the ideal in $\widetilde{R\llbracket \Lambda\rrbracket}_F$ generated by the elements 
\[e_i\mathrm{log}_F(x_{\alpha_j})-\mathrm{log}_F(x_{e_i\alpha_j}),\]
over all $e_i\in B$, and $\alpha_j\in\Delta$. 

The ideal ${\mathcal{J}}^\star_F$ is contained in the kernel of the augmentation map $\widetilde{R\llbracket \Lambda\rrbracket}_F\to R$, and, hence, the map $\widetilde{R\llbracket \Lambda\rrbracket}_F\to R$ factors through the quotient $\widetilde{R\llbracket \Lambda\rrbracket}_F/{\mathcal{J}}^\star_F$. Therefore, \cref{lem:closure} and \cref{thm:quotient-by-closed-ideal} imply that $\widetilde{R\llbracket \Lambda\rrbracket}_F/{\mathcal{J}}^\star_F$ is a complete Hausdorff ring with respect to the ${\mathcal{I}}^\star_F$-adic topology, where ${\mathcal{I}}^\star_F$ is the kernel of the induced augmentation map $\widetilde{R\llbracket \Lambda\rrbracket}_F/{\mathcal{J}}^\star_F\to R$. We will denote the image of $x_\lambda$ in the quotient  $\widetilde{R\llbracket \Lambda\rrbracket}_F/{\mathcal{J}}^\star_F$ by the same symbol.

\begin{lem}\label{lem:W-invariant-ideal}
	The ideal ${\mathcal{J}}^\star_F$ in ${R\llbracket\Lambda\rrbracket}_{F}$ is $W$-invariant.
\end{lem}
\begin{proof}
Under the continuous $W$-equivariant ring isomorphism $\mathrm{exp}_F^*\colon \widetilde{R\llbracket\Lambda\rrbracket}_{F}\to \widetilde{R\llbracket\Lambda\rrbracket}_{F_a}$, we have $\mathrm{exp}_F^*(\widetilde{\mathcal{J}}_F)=\widetilde{\mathcal{J}}_{F_a}$. To see this, we first note that, by continuity, $\mathrm{exp}_F^*(\widetilde{\mathcal{J}}_F)$ is contained in the closure $\widetilde{\mathcal{J}}_{F_a}$. Since $\mathrm{exp}_F^*$ is an isomorphism of topological rings, $\mathrm{exp}_F^*(\widetilde{\mathcal{J}}_F)$ is closed in $\widetilde{R\llbracket\Lambda\rrbracket}_{F_a}$, so it must equal $\widetilde{\mathcal{J}}_{F_a}$. 

Since $\mathrm{exp}_F^*$ is $W$-equivariant, we see that $\widetilde{\mathcal{J}}_{F}$ is $W$-invariant in $\widetilde{R\llbracket\Lambda\rrbracket}_{F}$ if and only if $\widetilde{\mathcal{J}}_{F_a}$ is $W$-invariant in $\widetilde{R\llbracket\Lambda\rrbracket}_{F_a}$. Thus, we will show that $\widetilde{\mathcal{J}}_{F_a}$ is $W$-invariant in $\widetilde{R\llbracket\Lambda\rrbracket}_{F_a}$. Let $\mathcal{J}$ be the ideal in $\widetilde{R\llbracket\Lambda\rrbracket}_{F_a}$ generated by the elements 
$e_ix_{\alpha_j}-x_{e_i\alpha_j},$
over all $e_i\in B$ and $\alpha_j\in\Delta$. The closure of $\mathcal{J}$ in $\widetilde{R\llbracket\Lambda\rrbracket}_{F_a}$ is ${\mathcal{J}}^\star_{F_a}$. If $\mathcal{J}$ is $W$-invariant, then so is the closure ${\mathcal{J}}^\star_{F_a}$ by continuity of the action of $w\in W$ on $\widetilde{R\llbracket\Lambda\rrbracket}_{F_a}$. In other words, suppose $\mathcal{J}$ is $W$-invariant. If $x\in{\mathcal{J}}^\star_{F_a}$ and $(x_i)_{i\geq 0}$ is a sequence of elements in $\mathcal{J}$ converging to $x$, then the sequence $(w(x_i))_{i\geq 0}$ in $\mathcal{J}$ converges to $w(x)$. So $w(x)$ lies in the closure ${\mathcal{J}}^\star_{F_a}$ as well. Thus, it is enough to show that $\mathcal{J}$ is $W$-invariant in $\widetilde{R\llbracket\Lambda\rrbracket}_{F_a}$. To do this, we will show that the image of $w(e_ix_{\alpha_j}-x_{e_i\alpha_j})$ in the quotient $\widetilde{R\llbracket\Lambda\rrbracket}_{F_a}/\mathcal{J}$ is zero for all $e_i\in B$, $\alpha_j\in \Delta$, and $w\in W$. Here, we are denoting the image of $x_\lambda$ in the quotient $\widetilde{R\llbracket\Lambda\rrbracket}_{F_a}/\mathcal{J}$ by the same symbol.

Fix $w\in W$. Since $w(\alpha_j)\in\Lambda$, we write $w(\alpha_j)=\sum_{k=1}^l\sum_{r=1}^nc_{k,r}e_k\alpha_r$ for some $c_{k,r}\in\mathbb{Z}$. Furthermore, we can write $e_ie_k=\sum_{t=1}^ld_t^{(i,k)}e_t$ for some $d_t^{(i,k)}\in \mathbb{Z}$, since $\{e_1,\dotsc,e_l\}$ is a $\mathbb{Z}$-basis of $\mathcal{R}$. Thus, in the quotient $\widetilde{R\llbracket\Lambda\rrbracket}_{F_a}/{\mathcal{J}}$, we have
\[w(x_{e_i\alpha_j})=x_{e_iw(\alpha_j)}=x_{e_i\sum_{k=1}^l\sum_{r=1}^nc_{k,r}e_k\alpha_r}=x_{\sum_{k,t=1}^l\sum_{r=1}^nc_{k,r}d_t^{(i,k)}e_t\alpha_r}.
\]
Since $(R,F_a)$ is the additive formal group law, we have \[x_{\sum_{k,t=1}^l\sum_{r=1}^nc_{k,r}d_t^{(i,k)}e_t\alpha_r}=\sum_{k,t=1}^l\sum_{r=1}^nc_{k,r}d_{t}^{(i,k)}x_{e_t\alpha_r}.\]
Finally, since $e_ix_{\alpha_j}=x_{e_i\alpha_j}$ in $\widetilde{R\llbracket\Lambda\rrbracket}_{F_a}/{\mathcal{J}}$, we have
\begin{align*}
w(x_{e_i\alpha_j})=\sum_{k,t=1}^l\sum_{r=1}^nc_{k,r}d_{t}^{(i,k)}e_tx_{\alpha_r}=e_i\sum_{k=1}^l\sum_{r=1}^nc_{k,r}e_kx_{\alpha_r}&=e_ix_{\sum_{k=1}^l\sum_{r=1}^nc_{k,r}e_k\alpha_r}&=e_ix_{w(\alpha_j)}=w(e_ix_{\alpha_j}).
\end{align*}
Therefore, ${\mathcal{J}}$ is a $W$-invariant ideal in $\widetilde{R\llbracket\Lambda\rrbracket}_{F_a}$.
\end{proof}

\begin{rem}{\label{rem:well-defined}}
	\cref{lem:W-invariant-ideal} implies that there is a well-defined $W$-action on the quotient $\widetilde{R\llbracket\Lambda\rrbracket}_F/{\mathcal{J}}^\star_{F}$, given by 
	\[w(x_\lambda)=x_{w(\lambda)},\quad w\in W,\textcolor{white}{..} \lambda\in\Lambda. \]
\end{rem}

In the proof of \cref{lem:W-invariant-ideal}, we showed that the continuous $W$-equivariant ring isomorphism $\mathrm{exp}_F^*\colon\widetilde{R\llbracket\Lambda\rrbracket}_F\to \widetilde{R\llbracket\Lambda\rrbracket}_{F_a}$ exchanges the $W$-invariant ideals ${\mathcal{J}}^\star_{F}$ and ${\mathcal{J}}^\star_{F_a}$. Thus, $\mathrm{exp}_F^*$ induces a continuous $W$-equivariant ring isomorphism on the quotients,
\[\widetilde{\mathrm{exp}}_F^*:\widetilde{R\llbracket\Lambda\rrbracket}_F/{\mathcal{J}}^\star_{F}\to \widetilde{R\llbracket\Lambda\rrbracket}_{F_a}/{\mathcal{J}}^\star_{F_a},\quad  \widetilde{\mathrm{exp}}_F^*(x_{\lambda})=\mathrm{exp}_F(x_{\lambda}),\quad \lambda\in\Lambda.\]
This is summarized by the following diagram, which commutes for all $w\in W$:
\begin{equation}\label{eqn:W-equivariance-of-quotient}
\begin{tikzcd}
\widetilde{R\llbracket\Lambda\rrbracket}_F/{\mathcal{J}}^\star_{F} \arrow[r,"\widetilde{exp}_{F}^*"] \arrow[d,"w"'] & \widetilde{R\llbracket\Lambda\rrbracket}_{F_a}/{\mathcal{J}}^\star_{F_a} \arrow[d,"w"] \\
\widetilde{R\llbracket\Lambda\rrbracket}_F/{\mathcal{J}}^\star_{F} \arrow[r,"\widetilde{exp}_{F}^*"] & \widetilde{R\llbracket\Lambda\rrbracket}_{F_a}/{\mathcal{J}}^\star_{F_a}.
\end{tikzcd}
\end{equation}
\begin{lem}{\label{lem:integral-isomorphism}}
	There is a continuous ring isomorphism \[\widetilde{R\llbracket\Lambda\rrbracket}_{F}/{\mathcal{J}}^\star_{F}\to  R\llbracket x_1,\dotsc,x_n\rrbracket,\]
	sending $x_{e_i\alpha_j}\mapsto \mathrm{exp}_F(e_ix_{j})$. In particular, $\widetilde{R\llbracket\Lambda\rrbracket}_{F}/{\mathcal{J}}^\star_{F}$ is an integral domain.
\end{lem}
\begin{proof}
There is a continuous ring isomorphism \[\widetilde{\mathrm{exp}}_F^*:\widetilde{R\llbracket\Lambda\rrbracket}_F/{\mathcal{J}}^\star_{F}\to \widetilde{R\llbracket\Lambda\rrbracket}_{F_a}/{\mathcal{J}}^\star_{F_a},\quad \widetilde{\mathrm{exp}}_F^*(x_{e_i\alpha_j})=\mathrm{exp}_F(x_{e_i\alpha_j}).\] Suppose there is a continuous ring isomorphism \[\gamma:\widetilde{R\llbracket\Lambda\rrbracket}_{F_a}/{\mathcal{J}}^\star_{F_a}\to R\llbracket x_1,\dotsc,x_n\rrbracket,\quad \gamma(x_{e_i\alpha_j})=e_ix_j.\]
 Then the composition 
 \[\gamma\circ \widetilde{\mathrm{exp}}_F^*:\widetilde{R\llbracket\Lambda\rrbracket}_{F}/{\mathcal{J}}^\star_{F}\to  R\llbracket x_1,\dotsc,x_n\rrbracket\]
  is a continuous ring isomorphism such that $(\gamma\circ \widetilde{\mathrm{exp}}_F^*)(x_{e_i\alpha_j})=\mathrm{exp}_F(e_ix_j)$, which proves the Lemma. Therefore, we will show the existence of the isomorphism $\gamma$.

Recall that ${\mathcal{J}}^\star_{F_a}$ is the closure of the ideal in $\widetilde{R\llbracket\Lambda\rrbracket}_{F_a}$ generated by the elements $x_{e_i\alpha_j}-e_ix_{\alpha_j}$,
over all $e_i\in B$ and $\alpha_j\in\Delta$.
In addition, recall that there is a continuous ring isomorphism 
\[\varrho\colon \widetilde{R\llbracket\Lambda\rrbracket}_{F_a}\to R\llbracket x_1,\dotsc,x_{nl}\rrbracket,\quad \varrho(x_{me_i\alpha_j})=mx_{(i-1)n+j},\] for all $e_i\in B$, $\alpha_j\in\Delta$, and $m\in\mathbb{Z}$ (see \cref{equation:iso}). The image $\varrho({\mathcal{J}}^\star_{F_a})$ is the closure of the ideal in $R\llbracket x_1,\dotsc x_{nl}\rrbracket$ generated by the elements 
$x_{(i-1)n+j}-e_ix_j$,
over all $i=1,\dotsc,l$, and $j=1,\dotsc,n$. By \cref{lem:closure} and \cref{thm:quotient-by-closed-ideal}, the quotient $R\llbracket x_1,\dotsc,x_{nl}\rrbracket/\varrho({\mathcal{J}}^\star_{F_a})$ is a complete Hausdorff ring with respect to the topology induced by the kernel of the augmentation map $R\llbracket x_1,\dotsc,x_{nl}\rrbracket/\varrho({\mathcal{J}}^\star_{F_a})\to R$. Thus, there is a continuous ring isomorphism on the quotients,
\[\tilde{\varrho}\colon \widetilde{R\llbracket\Lambda\rrbracket}_{F_a}/{\mathcal{J}}^\star_{F_a}\to R\llbracket x_1,\dotsc,x_{nl}\rrbracket/\varrho({\mathcal{J}}^\star_{F_a}),\quad \tilde{\varrho}(x_{me_i\alpha_j})=mx_{(i-1)n+j}.\]
Here we denote the image of $x_r$ in the quotient $ R\llbracket x_1,\dotsc,x_{nl}\rrbracket/\varrho({\mathcal{J}}^\star_{F_a})$ by the same symbol. 

Suppose there is a continuous ring isomorphism 
\[{\kappa}: R\llbracket x_1,\dotsc,x_{nl}\rrbracket/\varrho({\mathcal{J}}^\star_{F_a})\to R\llbracket x_1,\dotsc,x_n\rrbracket,\quad {\kappa}(x_{(i-1)n+j})=e_ix_j.\] Then the composition \[{\kappa}\circ \tilde{\varrho}: \widetilde{R\llbracket\Lambda\rrbracket}_{F_a}/{\mathcal{J}}^\star_{F_a}\to R\llbracket x_1,\dotsc,x_n\rrbracket\] is a continuous ring isomorphism such that $({\kappa}\circ \tilde{\varrho})(x_{e_i\alpha_j})=e_ix_j$. In particular, we can take $\gamma:=\kappa\circ \tilde{\varrho}$. Therefore, all we need to do is show the existence of the isomorphism ${\kappa}$.

	The ring homomorphism 
	\[\bar{\kappa}\colon R[ x_1,\dotsc,x_{nl}]\to  R\llbracket x_1,\dotsc,x_{n}\rrbracket,\quad  \bar{\kappa}(x_{(i-1)n+j})= e_ix_j\] extends to a continuous ring homomorphism
	\[\tilde{\kappa}\colon R\llbracket x_1,\dotsc,x_{nl}\rrbracket\to  R\llbracket x_1,\dotsc,x_{n}\rrbracket,\quad   \tilde{\kappa}(x_{(i-1)n+j})= e_ix_j.\] Since $e_1=1$, we have \[\tilde{\kappa}(x_{(i-1)n+j}-e_ix_j)=\tilde{\kappa}(x_{(i-1)n+j}-e_ix_{(1-1)n+j})=e_ix_j-e_ix_j=0.\]
	Therefore, $\tilde{\kappa}$ induces a continuous ring homomorphism  \[{\kappa}\colon R\llbracket x_1,\dotsc,x_{nl}\rrbracket/\varrho({\mathcal{J}}^\star_{F_a})\to R\llbracket x_1,\dotsc,x_{n}\rrbracket,\quad {\kappa}(x_{(i-1)n+j})= e_ix_j.\] 
	
	In the other direction, the ring homomorphism 
	\[\tilde{\iota}\colon R[x_1,\dotsc,x_n]\to R\llbracket x_1,\dotsc,x_{nl}\rrbracket/\varrho({\mathcal{J}}^\star_{F_a}),\quad \tilde{\iota}(x_j)= x_j\] extends to a continuous ring homomorphism
	\[\iota\colon R\llbracket x_1,\dotsc,x_{n}\rrbracket\to R\llbracket x_1,\dotsc,x_{nl}\rrbracket/\varrho({\mathcal{J}}^\star_{F_a}),\quad {\iota}(x_j)= x_j.\] We see that $({\kappa}\circ \iota)(x_j)={\kappa}(x_j)=x_j$ and that $(\iota\circ {\kappa})(x_{(i-1)n+j})=\iota(e_ix_j)=e_ix_j=x_{(i-1)n+j}$. Since the $x_j$, $j=1,\dotsc,n$, generate $R\llbracket x_1,\dotsc,x_n\rrbracket$ as a ring, and the $x_k$, $k=1,\dotsc,nl$, generate $R\llbracket x_1,\dotsc,x_{nl}\rrbracket/\varrho({\mathcal{J}}^\star_{F_a})$ as a ring, it follows from the continuity of $\iota$ and ${\kappa}$ that $\iota$ and ${\kappa}$ are continuous ring isomorphisms that are inverse to each other. This proves the Lemma.
\end{proof}

\begin{lem}\label{lem:mixed-divisible}
	The element $x_{\alpha_j}$ divides $x_{e_i\alpha_j}$ in $\widetilde{R\llbracket\Lambda\rrbracket}_{F}/{\mathcal{J}}^\star_{F}$ for any $\alpha_j\in \Delta$ and $e_i\in B$. 
\end{lem}
\begin{proof}
	Recall that $\mathrm{log}_{F}$ satisfies
	\[\mathrm{log}_{F}(u)=u+\sum_{k\geq 2}b_ku^k =u\left(1+\sum_{k\geq2}b_ku^{k-1}\right)\]
	for some $b_k\in R$. The series $g(u):=(1+\sum_{k\geq2}b_ku^{k-1})$ is invertible in $R\llbracket u\rrbracket$ since its constant term is 1. By definition, the relation 
	\[e_i\mathrm{log}_{F}(x_{\alpha_j})=\mathrm{log}_{F}(x_{e_i\alpha_j})\]
	holds in $\widetilde{R\llbracket\Lambda\rrbracket}_{F}/{\mathcal{J}}^\star_{F}$. Therefore, 
	\begin{align*}
	&e_i x_{\alpha_j}+e_i\sum_{k\geq 2}b_kx_{\alpha_j}^k=x_{e_i\alpha_j}+\sum_{k\geq 2}b_kx_{e_i\alpha_j}^k
	\\&\implies (e_i x_{\alpha_j})g( x_{\alpha_j})=x_{e_i\alpha_j}g( x_{e_i\alpha_j})
	\implies x_{\alpha_j} (e_i g( x_{\alpha_j}) g(x_{e_i\alpha_j})^{-1})=x_{e_i\alpha_j}.
	\end{align*}
	Thus, $x_{\alpha_j}$ divides $x_{e_i\alpha_j}$ in $\widetilde{R\llbracket\Lambda\rrbracket}_{F}/{\mathcal{J}}^\star_{F}$.
\end{proof}

Let $\mathcal{J}_F^\circ$ be the closure of the ideal in $R\llbracket x_\Lambda\rrbracket$ generated by the elements 
\[x_0 \quad \text{and}\quad x_{\lambda_1+\lambda_2}-(x_{\lambda_1}+_Fx_{\lambda_2}) \quad \text{and}\quad e_i\mathrm{log}_F(x_{\alpha_j})-\mathrm{log}_F(x_{e_i\alpha_j}),\]
over all $e_i\in B$, $\alpha_j\in\Delta$, and $\lambda_1,\lambda_2\in\Lambda$. By similar reasoning as earlier, the quotient $R\llbracket x_\Lambda\rrbracket/\mathcal{J}_F^\circ$ is a complete Hausdorff ring with respect to the $\mathcal{I}_{F}^\circ$-adic topology, where $\mathcal{I}_{F}^\circ$ is the kernel of the induced augmentation map $R\llbracket x_\Lambda\rrbracket/\mathcal{J}_F^\circ\to R$.

\begin{lem}{\label{lem:second-perspective}}
	There is a canonical $W$-equivariant continuous ring isomorphism 
	\[R\llbracket x_\Lambda\rrbracket/\mathcal{J}_F^\circ\to  \left(R\llbracket x_\Lambda\rrbracket/\widetilde{{\mathcal{J}}}_F\right)/{\mathcal{J}}_F^\star=\widetilde{R\llbracket \Lambda\rrbracket}_{F}/{\mathcal{J}}_F^\star,\]
	sending $x_\lambda+\mathcal{J}_F^\circ\mapsto \left(x_\lambda+\widetilde{{\mathcal{J}}}_F\right)+{\mathcal{J}}_F^\star$ for all $\lambda\in\Lambda$. 
\end{lem}
\begin{proof}
	The ring homomorphism 
	\[\tilde{\phi}: R[x_\Lambda]\to R\llbracket x_\Lambda\rrbracket/{{\mathcal{J}}}_F^\circ,\quad   \tilde{\phi}(x_\lambda)=x_\lambda+{{\mathcal{J}}}_F^\circ,\]
	extends to a continuous ring homomorphism \[\hat{\phi}:R\llbracket x_\Lambda\rrbracket \to R\llbracket x_\Lambda\rrbracket/{{\mathcal{J}}}_F^\circ,\quad \hat{\phi}(x_\lambda)=x_\lambda+{{\mathcal{J}}}_F^\circ.\] By continuity of $\hat{\phi}$ and Hausdorff completeness of $R\llbracket x_\Lambda\rrbracket$ and $ R\llbracket x_\Lambda\rrbracket/{{\mathcal{J}}}_F^\circ$, we see that 
	\begin{align*}
	\hat{\phi}(x_{\lambda}+_Fx_\mu)=\hat{\phi}(x_\lambda)+_F\hat{\phi}(x_\mu)&=\left(x_\lambda+{{\mathcal{J}}}_F^\circ\right)+_F\left(x_\mu+{{\mathcal{J}}}_F^\circ\right)\\&=(x_{\lambda}+_Fx_{\mu})+{{\mathcal{J}}}_F^\circ=x_{\lambda+\mu}+{{\mathcal{J}}}_F^\circ=\hat{\phi}(x_{\lambda+\mu}).
	\end{align*}
	Thus, $\hat{\phi}$ induces a continuous ring homomorphism \[\bar{\phi}:R\llbracket x_\Lambda\rrbracket/\widetilde{{\mathcal{J}}}_F \to R\llbracket x_\Lambda\rrbracket/{{\mathcal{J}}}_F^\circ,\quad \bar{\phi}\left(x_\lambda+\widetilde{{\mathcal{J}}}_F\right)=x_\lambda+{{\mathcal{J}}}_F^\circ.\] 
	By continuity of $\bar{\phi}$ and Hausdorff completeness of $R\llbracket x_\Lambda\rrbracket/\widetilde{{\mathcal{J}}}_F$ and $R\llbracket x_\Lambda\rrbracket/{{\mathcal{J}}}_F^\circ$, we see that 
	\begin{align*}
	\bar{\phi}&\left(e_i\mathrm{log}_F(x_{\alpha_j})+\widetilde{{\mathcal{J}}}_F\right)=\bar{\phi}\left(e_i\mathrm{log}_F\left(x_{\alpha_j}+\widetilde{{\mathcal{J}}}_F\right)\right)=e_i\mathrm{log}_F\left(\bar{\phi}\left( x_{\alpha_j}+\widetilde{{\mathcal{J}}}_F\right)\right)\\&=e_i\mathrm{log}_F\left( x_{\alpha_j}+{\mathcal{J}}^\circ_F\right)=e_i\mathrm{log}_F(x_{\alpha_j})+{{\mathcal{J}}}_F^\circ=\mathrm{log}_F(x_{e_i\alpha_j})+{{\mathcal{J}}}_F^\circ=\mathrm{log}_F\left(x_{e_i\alpha_j}+{{\mathcal{J}}}_F^\circ\right)\\&=\mathrm{log}_F\left(\bar{\phi}\left(x_{e_i\alpha_j}+\widetilde{{\mathcal{J}}}_F\right)\right)=\bar{\phi}\left(\mathrm{log}_F\left(x_{e_i\alpha_j}+\widetilde{{\mathcal{J}}}_F\right)\right)=\bar{\phi}\left(\mathrm{log}_F(x_{e_i\alpha_j})+\widetilde{{\mathcal{J}}}_F\right).
	\end{align*}
	Thus, $\bar{\phi}$ induces a continuous ring homomorphism \[{\phi}\colon \left(R\llbracket x_\Lambda\rrbracket/\widetilde{{\mathcal{J}}}_F\right)/\mathcal{J}_F^\star \to R\llbracket x_\Lambda\rrbracket/{{\mathcal{J}}}_F^\circ,\quad \phi\left(\left(x_\lambda+\widetilde{{\mathcal{J}}}_F\right) +{\mathcal{J}}_F^\star\right)=x_\lambda+{{\mathcal{J}}}_F^\circ.\]
	
In the other direction, the ring homomorphism 
\[\tilde{\psi}: R[x_\Lambda]\to \left(R\llbracket x_\Lambda\rrbracket/\widetilde{{\mathcal{J}}}_F\right)/{\mathcal{J}}_F^\star,\quad   \tilde{\psi}(x_\lambda)=\left(x_\lambda+\widetilde{{\mathcal{J}}}_F\right)+{\mathcal{J}}_F^\star,\] extends to a continuous ring homomorphism \[\hat{\psi}: R\llbracket x_\Lambda\rrbracket\to \left(R\llbracket x_\Lambda\rrbracket/\widetilde{{\mathcal{J}}}_F\right)/{\mathcal{J}}_F^\star,\quad \hat{\psi}(x_\lambda)=\left(x_\lambda+\widetilde{{\mathcal{J}}}_F\right)+{\mathcal{J}}_F^\star\] By continuity of $\hat{\psi}$ and Hausdorff completeness of $R\llbracket x_\Lambda\rrbracket/\widetilde{{\mathcal{J}}}_F$ and $\left(R\llbracket x_\Lambda\rrbracket/\widetilde{{\mathcal{J}}}_F\right)/{\mathcal{J}}_F^\star$ and $R\llbracket x_\Lambda\rrbracket$, we see that\begin{align*}
\hat{\psi}(x_{\lambda}+_Fx_{\mu})&=\hat{\psi}(x_{\lambda})+_F\hat{\psi}(x_{\mu})=\left(\left(x_\lambda+\widetilde{{\mathcal{J}}}_F\right)+{\mathcal{J}}_F^\star\right)+_F\left(\left(x_\mu+\widetilde{{\mathcal{J}}}_F\right)+{\mathcal{J}}_F^\star\right)\\&=\left((x_\lambda+_F x_\mu)+\widetilde{{\mathcal{J}}}_F\right)+{\mathcal{J}}_F^\star=\left(x_{\lambda+\mu}+\widetilde{{\mathcal{J}}}_F\right)+{\mathcal{J}}_F^\star=\hat{\psi}(x_{\lambda+\mu}).\end{align*}
In addition, we have
\begin{align*}
\hat{\psi}&\left(e_i\mathrm{log}_F(x_{\alpha_j})\right)=e_i\mathrm{log}_F\left(\hat{\psi}\left(x_{\alpha_j}\right)\right)=e_i\mathrm{log}_F\left(\left(x_{\alpha_j}+\widetilde{{\mathcal{J}}}_F\right)+{\mathcal{J}}_F^\star\right)=\left(e_i\mathrm{log}_F\left( x_{\alpha_j}\right)+\widetilde{{\mathcal{J}}}_F\right)+{\mathcal{J}}_F^\star\\&=\left(\mathrm{log}_F\left( x_{e_i\alpha_j}\right)+\widetilde{{\mathcal{J}}}_F\right)+{\mathcal{J}}_F^\star=\mathrm{log}_F\left(\left(x_{e_i\alpha_j}+\widetilde{{\mathcal{J}}}_F\right)+{\mathcal{J}}_F^\star\right)=\mathrm{log}_F\left(\hat{\psi}\left(x_{e_i\alpha_j}\right)\right)=\hat{\psi}\left(\mathrm{log}_F\left(x_{e_i\alpha_j}\right)\right).
\end{align*}
Thus, $\hat{\psi}$ induces a continuous ring homomorphism \[{\psi}\colon R\llbracket x_\lambda\rrbracket/\mathcal{J}_F^\circ\to \left(R\llbracket x_\Lambda\rrbracket/\widetilde{{\mathcal{J}}}_F\right)/{\mathcal{J}}_F^\star,\quad {\psi}\left(x_\lambda+{\mathcal{J}}_F^\circ\right)=\left(x_\lambda+\widetilde{{\mathcal{J}}}_F\right)+{\mathcal{J}}_F^\star.\]

The continuous ring homomorphisms ${\phi}$ and ${\psi}$ are $W$-equivariant and inverse to each other. Therefore, we have proven the Lemma.
\end{proof}

Suppose $R$ is a subring of $\mathbb{C}$, and $(R,F)$ is a one-dimensional commutative formal group law over $R$. Then we may view $(R,F)$ as a formal group law $(\mathbb{C},F)$ over $\mathbb{C}$. We now allow $\Sigma$ to be crystallographic or noncrystallographic. We work under \cref{assumption-ring} for the remainder of this paper.

\begin{assumption}\label{assumption-ring}
$R$ is a subring of $\mathbb{C}$, and $(R,F)$ is a one-dimensional commutative formal group law over $R$. In addition, if $\Sigma$ is \textit{noncrystallographic}, then $R$ is an ample ring with respect to $(\mathbb{C},F)$, such that $R$ contains $\mathcal{R}$.
\end{assumption}

We define elements $f_{i,j}\in R\llbracket x_\Lambda\rrbracket$, $i=1,\dotsc,l$, and $j=1,\dotsc,n$, as follows:
\[f_{i,j}=\begin{cases}
e_i\mathrm{log}_F(x_{\alpha_j})-\mathrm{log}_F(x_{e_i\alpha_j}),\quad \text{$\Sigma$ noncrystallographic};\\
0,\quad\quad\quad\quad\quad\quad\quad\quad\quad\quad\quad\textcolor{white}{-} \text{$\Sigma$ crystallographic}.
\end{cases}\]
Let $\mathcal{J}_F$ be the closure of the ideal in $R\llbracket x_\Lambda\rrbracket$ generated by the elements 
\[x_0 \quad \text{and}\quad x_{\lambda_1+\lambda_2}-(x_{\lambda_1}+_Fx_{\lambda_2}) \quad \text{and}\quad f_{i,j},\]
over all $i=1,\dotsc,l$, and $j=1,\dotsc,n$, and $\lambda_1,\lambda_2\in\Lambda$. 
\begin{dfn}
	The quotient 
	\[{R\llbracket\Lambda\rrbracket}_{F}:=R\llbracket x_\Lambda\rrbracket_F/\mathcal{J}_F\]
	is the \textit{formal group ring} with respect to $(R,F)$ and $\Lambda$. 
\end{dfn}
 
We denote the image of $x_\lambda$ in ${R\llbracket\Lambda\rrbracket}_{F}$ by the same symbol. By similar reasoning as earlier, ${R\llbracket\Lambda\rrbracket}_{F}$ is a complete Hausdorff ring with respect to the $\mathcal{I}_{F}$-adic topology, where $\mathcal{I}_{F}$ is the kernel of the augmentation map ${R\llbracket\Lambda\rrbracket}_{F}\to R$ sending $x_\lambda\mapsto 0$ for all $\lambda\in\Lambda$.

\begin{rem}
	If $\Sigma$ is \textit{crystallographic}, then $\mathcal{J}_F$ is the closure of the ideal in $R\llbracket x_\Lambda\rrbracket$ generated by the elements 
	\[ x_0\quad \text{and}\quad x_{\lambda+\mu}-(x_\lambda+_Fx_\mu),\]
	over all $\lambda,\mu\in\Lambda$. In this case, ${R\llbracket\Lambda\rrbracket}_{F}$
	is the classical formal group ring with respect to the crystallographic root lattice $\Lambda$.
\end{rem}

\begin{rem}
	In \cite{CPZ}, the formal group ring is defined over any commutative unital ring $S$. However, if $\Sigma$ is noncrystallographic, then the definition of $\mathcal{J}_F$ requires that $S$ contains $\mathcal{R}$. In order to provide a simple unified exposition of the formal group ring for all finite root systems, we restrict $S$ to a subring of $\mathbb{C}$ in this paper (subject to \cref{assumption-ring}).
\end{rem}

\begin{lem}\label{lem:same-properties}
	The ring ${R\llbracket\Lambda\rrbracket}_{F}$ satisfies the following properties:
	\begin{enumerate}
		\item\label{itm:same-one} There is a well-defined $W$-action on ${R\llbracket\Lambda\rrbracket}_{F}$ given by 
		\[w(x_{\lambda})=x_{w(\lambda)},\quad w\in W,\text{ }\lambda\in\Lambda.\]
		\item\label{itm:same-two} There is a continuous ring isomorphism 
		\[{R\llbracket\Lambda\rrbracket}_{F}\to R\llbracket x_1,\dotsc,x_n\rrbracket,\quad x_{e_i\alpha_j}\mapsto \mathrm{exp}_{F}(e_ix_j).\]
		In particular, ${R\llbracket\Lambda\rrbracket}_{F}$ is an integral domain.
		\item \label{itm:same-three} $x_{\alpha_j}$ divides $x_{e_i\alpha_j}$ in ${R\llbracket\Lambda\rrbracket}_{F}$ for all $e_i\in B$ and $\alpha_j\in\Delta$.
	\end{enumerate}
\end{lem}
\begin{proof}
	First assume $\Sigma$ is noncrystallographic. Property (\ref{itm:same-one}) follows from \cref{rem:well-defined} and \cref{lem:second-perspective}. Property (\ref{itm:same-two}) follows from \cref{lem:integral-isomorphism} and \cref{lem:second-perspective}. Property (\ref{itm:same-three}) follows from \cref{lem:mixed-divisible} and \cref{lem:second-perspective}. 
	
	Now assume $\Sigma$ is crystallographic. Then $B=\{e_1=1\}$. Property (\ref{itm:same-one}) follows from \cref{eqn:w-action}. Property (\ref{itm:same-two}) follows from \cref{equation:iso}. Property (\ref{itm:same-three}) is trivial.
\end{proof}

\begin{rem}
	Since $e_1=1$, the isomorphism of \cref{lem:same-properties} (\ref{itm:same-two}) sends $x_{\alpha_j}\mapsto \mathrm{exp}_F(x_{j})$ for each $\alpha_j\in \Delta$.
\end{rem}

For a finite sequence of simple roots $I=(\alpha_{i_1},\dotsc,\alpha_{i_r})$, we set $x_{I}:=x_{\alpha_{i_1}}\dotsb x_{\alpha_{i_r}}$. We define the length of $I$ to be $l(I)=r$. We say a sequence $I=(\alpha_{i_1},\dotsc,\alpha_{i_r})$ of simple roots is \textit{ordered} if $i_1\leq i_2\leq\dotsb \leq i_r$. Let $\Upsilon$ be the set of all sequences $I$ of simple roots such that $l(I)\geq 1$, and let $\Upsilon_r$ be the subset of $\Upsilon$ consisting of sequences of simple roots of length $r$.

\begin{rem}{\label{rem:unique-decomposition}}
		Let $\lambda\in\Lambda$. Since $\{e_i\alpha_j\}$ is a $\mathbb{Z}$-basis for $\Lambda$, we can write $\lambda=\sum_{j=1}^n\sum_{i=1}^lc_{i,j}e_i\alpha_j$ for some $c_{i,j}\in\mathbb{Z}$. The relations in $R\llbracket\Lambda\rrbracket_{F_a}$ allow us to write $x_\lambda\in R\llbracket\Lambda\rrbracket_{F_a}$ in the form 
		$x_\lambda=\sum_{j=1}^n\left(\sum_{i=1}^lc_{i,j}e_i\right)x_{\alpha_j}$, where $\sum_{i=1}^lc_{i,j}e_i\in\mathcal{R}$ for each fixed $j$. In particular, $x_\lambda$ can be written as an $\mathcal{R}$-linear combination of the elements $x_{\alpha_j}$, where $\alpha_j\in\Delta$. Since $e_1=1$, the isomorphism of \cref{lem:same-properties} (\ref{itm:same-two}) implies that $x_\lambda$ can be written \textit{uniquely} as an $\mathcal{R}$-linear combination of the elements $x_{\alpha_j}$, where $\alpha_j\in\Delta$. Thus, any product of $r$ $x_{\lambda}$'s, where $r\geq 1$, can be written uniquely as an $\mathcal{R}$-linear combination of elements of the form $x_I$, where $I\in\Upsilon_r$. 
\end{rem}

\begin{rem}{\label{rem:R-unique-decomposition}}
	Since $e_1=1$, the isomorphism of \cref{lem:same-properties} (\ref{itm:same-two}) implies that any element in $R\llbracket\Lambda\rrbracket_{F_a}$ can be written \textit{uniquely} as an ${R}$-linear combination of $1$ and the elements $x_I$, where $I\in\Upsilon$. 
\end{rem}

\begin{ex}{\label{ex:additiveFGL}}{\textup{(cf.} \cite[Ex. 2.19]{CPZ}\textup{)}}
	Let $S_R^i(\Lambda)$ be the $i$-th symmetric power of the $R$-module $R\otimes_{\mathcal{R}}\Lambda$, where we view $\Lambda$ as a free finitely-generated $\mathcal{R}$-module with basis $\Delta$. For $I=(\alpha_{i_1},\dotsc,\alpha_{i_r})\in\Upsilon$, set $\alpha_I:=\alpha_{i_1}\dotsb\alpha_{i_r}\in S_R^r(\Lambda)$. By definition, every element in $S_R^r(\Lambda)$ can be written \textit{uniquely} as an $R$-linear combination of $1$ and the elements $\alpha_I$, where $I\in \Upsilon_r$. The $R$-algebra $(S_R^*(\Lambda))^\wedge:=\prod\limits_{i=0}^{\infty}S_R^i(\Lambda)$ is the completion of the symmetric algebra $S_R^*(\Lambda):=\bigoplus\limits_{i=0}^{\infty}S_R^i(\Lambda)$ at the ideal generated by the kernel of the augmentation map $\alpha_I\mapsto 0$, $I\in \Upsilon$. By \cref{rem:R-unique-decomposition}, every element in $R\llbracket\Lambda\rrbracket_{F_a}$ can be written \textit{uniquely} as an ${R}$-linear combination of $1$ and the elements $x_I$, where $I\in\Upsilon$. Thus, there is a continuous $R$-linear ring isomorphism
	\[\phi:	\begin{tikzcd}
	R\llbracket\Lambda\rrbracket_{F_a} \arrow[r,] & (S_R^*(\Lambda))^\wedge,
	\end{tikzcd}\]
	sending $x_I\mapsto \alpha_I$ for all $I\in\Upsilon$, and extended by $R$-linearity. 
\end{ex}

\begin{ex}{\label{ex:multiplicaiveFGL}}{\textup{(cf.} \cite[Ex. 2.20]{CPZ}\textup{)}}
	Consider the group ring 
	\[R[\Lambda]:=\left\{\sum_{j}r_je^{\lambda_j}\mid r_j\in R,\quad \lambda_j\in\Lambda\right\}.\]
	Let $\mathrm{tr}:R[\Lambda]\to R$ be the $R$-linear \textit{trace} map that sends $e^\lambda\mapsto 1$ for all $\lambda\in\Lambda$. Let $R[\Lambda]^\wedge$ be the $\mathrm{ker}(\mathrm{tr})$-adic completion of $R[\Lambda]$. There is a continuous ring isomorphism
	\[\hat{h}:\widetilde{R\llbracket \Lambda\rrbracket}_{F_m}\to  R[\Lambda]^\wedge,\]
	where $\hat{h}(x_\lambda)= e^\lambda-1$ and $\hat{h}^{-1}(e^\lambda)= 1+x_\lambda$.
	
	Suppose that $\Sigma$ is noncrystallographic. Let $\mathcal{E}$ be the closure of the ideal in $R[\Lambda]^\wedge$ generated by the elements \[e_i\mathrm{log}_{F_m}(1-e^{\alpha_j})-\mathrm{log}_{F_m}(1-e^{e_i\alpha_j}),\] over all $e_i\in B$ and $\alpha_j\in\Delta$. We have $\hat{h}(\widetilde{\mathcal{J}}_{F_m})=\mathcal{E}$. To see this, note that, by continuity, $\hat{h}(\widetilde{\mathcal{J}}_{F_m})\subseteq \mathcal{E}$. Since $\hat{h}$ is an isomorphism of topological rings, $\hat{h}(\widetilde{\mathcal{J}}_{F_m})$ is closed, so we get equality. Through the continuous ring isomorphism $R\llbracket \Lambda\rrbracket_{F_m}\to  \widetilde{R\llbracket \Lambda\rrbracket}_{F_m}/\mathcal{J}_{F_m}$ of  \cref{lem:second-perspective}, we see that $\hat{h}$ induces a continuous ring isomorphism
	\[{h}\colon R\llbracket \Lambda\rrbracket_{F_m}\to  \left(R[\Lambda]^\wedge\right)/\mathcal{E},\]
	such that ${h}(x_\lambda)= e^\lambda-1$ and ${h}^{-1}(e^\lambda)= 1+x_\lambda$.
\end{ex}

\section{Formal Demazure operators}\label{section:localization}

In this section, we show that the element $u-s_\alpha(u)$ is divisible by $x_\alpha$ in $R\llbracket\Lambda\rrbracket_F$ for all $u\in R\llbracket\Lambda\rrbracket_F$ and $\alpha\in\Sigma$. Using this fact, we define the \textit{formal Demazure operator} $\Delta_\alpha^{(R,F)}$ in this section. When $(R,F)=(R,F_a)$ is the additive formal group law over $R$, the isomorphism $\phi$ of \cref{ex:additiveFGL} exchanges the formal Demazure operator $\Delta_\alpha^{(R,F_a)}$ on $R\llbracket \Lambda\rrbracket_{F_a}$ with an operator $(\Delta_\alpha^R)^\wedge$ on $(S_R^*(\Lambda))^\wedge$. We conclude this section by studying the restriction of $(\Delta_\alpha^R)^\wedge$ to the symmetric algebra $S_R^*(\Lambda)$.

The following result will allow us to define the formal Demazure operator on $R\llbracket\Lambda\rrbracket_F$.

\begin{lem}{\textup{(cf.} \cite[Cor. 3.4]{CPZ}\textup{)}}{\label{lem:divisible}}
	For any $u\in R\llbracket\Lambda\rrbracket_F$ and root $\alpha\in\Sigma$, the element $u-s_\alpha(u)$ is divisible by $x_\alpha$ in $R\llbracket\Lambda\rrbracket_F$.
\end{lem}
\begin{proof}\details{\textcolor{red}{This lemma and proof are different.}}
	First we assume that $\alpha=\alpha_j$ is a \textit{simple} root. Since $\alpha_j^\vee(\lambda)\in \mathcal{R}$ for any $\lambda\in\Lambda$, it follows that $\alpha_j^\vee(\lambda)=\sum_{i=1}^l c_ie_i$ for some $c_i\in\mathbb{Z}$. We have
	\begin{align*}
	s_j(x_\lambda)=x_{\lambda-\alpha_j^\vee(\lambda)\alpha_j}&=x_{\lambda-\sum_{i=1}^l c_i(e_i\alpha_j)}=x_\lambda+_F (((-c_1)\cdot_F x_{e_1\alpha_j})+_F\dotsb+_F ((-c_l)\cdot_F x_{e_l\alpha_j})).
	\end{align*}
	Set $r=((-c_1)\cdot_F x_{e_1\alpha_j})+_F\dotsb+_F ((-c_l)\cdot_F x_{e_l\alpha_j})$. By (\ref{itm:same-three}) of \cref{lem:same-properties}, $x_{\alpha_j}$ divides each $x_{e_i\alpha_j}$. Hence, $x_{\alpha_j}$ divides each $(-c_i)\cdot_F x_{e_i\alpha_j}$. Thus, $x_{\alpha_j}$ divides $r$. Write $u+_Fv=u+vg(u,v)$, where $g(u,v)\in R\llbracket u,v\rrbracket$. Then
	\[x_{\lambda}-s_\alpha(x_\lambda)=x_{\lambda}-(x_\lambda+_Fr)=x_{\lambda}-(x_\lambda+rg(x_\lambda,r))=-rg(x_\lambda,r).\] 	
	So $x_{\alpha_j}$ divides $x_{\lambda}-s_j(x_\lambda)$. Then, by the formula
	\[xy-s_j(xy)=(x-s_j(x))y+x(y-s_j(y))-(x-s_j(x))(y-s_j(y)),\]
	the element $x_{\alpha_j}$ divides $u-s_j(u)$ for any monominal $u\in R[x_\Lambda].$ Finally, $x_{\alpha_j}$ divides $u-s_j(u)$ for any $u\in R\llbracket\Lambda\rrbracket_F$ by density of $R\llbracket\Lambda\rrbracket_F$.
	
	Now let $\alpha\in\Sigma$ be \textit{any} root. By \cref{lem:root-conjugation}, the root $\alpha$ can be written $\alpha=w(\alpha_j)$ for some $\alpha_j\in\Delta$ and $w\in W$. Furthermore, \cref{prop:orthogonal-property} says that $ws_{\beta}w^{-1}=s_{w(\beta)}$ for any $w\in W$ and $\beta\in\Sigma$. Thus, given $u\in R\llbracket\Lambda\rrbracket_F$, we see that
	\begin{align*}
	\tfrac{u-s_\alpha(u)}{x_\alpha}&=\tfrac{u-s_{w(\alpha_j)}(u)}{x_{w(\alpha_j)}}=\tfrac{(ww^{-1})(u)-(ws_j w^{-1})(u)}{(ww^{-1})(x_{w(\alpha_j)})}=w\left(\tfrac{w^{-1}(u)-s_j(w^{-1}(u))}{x_{\alpha_j}}\right)\in R\llbracket\Lambda\rrbracket_F.
	\end{align*}\qedhere
\end{proof}

\begin{dfn}{\label{dfn:formal-Demazure-operator}}
	Following \cite[Def.~4.1]{HMSZ}, for each root $\alpha\in\Sigma$, we define an $R$-linear operator $\Delta_\alpha^{(R,F)}$ on $R\llbracket\Lambda\rrbracket_{F}$ by the formula
	\[\Delta_\alpha^{(R,F)}(u)=\tfrac{u-s_\alpha(u)}{x_\alpha},\quad u\in R\llbracket\Lambda\rrbracket_{F}.\]
	We call the operator $\Delta_\alpha^{(R,F)}$ a \textit{formal Demazure operator}. We will usually write $\Delta_\alpha$ instead of $\Delta_\alpha^{(R,F)}$, omitting the superscript.
\end{dfn}

\details{
\begin{ex}
	Let $W=H_2$, and let $(R,F)$ be the additive formal group law over $R$. Choose a basis $B(1,\tau)$ of $\mathcal{R}$ over $\mathbb{Z}$, where $\tau=\tfrac{1+\sqrt{5}}{2}$ is the golden ratio. In this case, we can choose our simple roots $\alpha_1=(1,0)$ and $\alpha_2=(-\tfrac{\tau}{2},\tfrac{\sqrt{3-\tau}}{2})$, with coordinates in $\mathbb{R}^2$. Note that $\tau$ satisfies the relation $\tau^2=\tau+1$. 
	
	Consider the root $\alpha=-(\phi\alpha_1+\alpha_2)$. One can show that $(\alpha,\alpha)=1$ and $(\alpha_2,\alpha)=\tfrac{\phi-1}{2}$. We compute
	\begin{align*}
	x_{\alpha_2}-s_{\alpha}(x_{\alpha_2})=x_{\alpha_2}-x_{\alpha_2-2\tfrac{(\alpha_2,\alpha)}{(\alpha,\alpha)}\alpha}
	&=x_{\alpha_2}-x_{\alpha_2+(\phi-1)(\phi\alpha_1+\alpha_2)}\\
	&=x_{\alpha_2}-x_{\alpha_1+\phi x_{\alpha_2}}\\
	&=x_{\alpha_2}-x_{\alpha_1}-\phi x_{\alpha_2}.
	\end{align*}
	Thus, we see that $x_{\alpha_2}-s_{\alpha}(x_{\alpha_2})$ is not divisible by $x_{\alpha}$
\end{ex}
}

\details{\begin{rem}
	Suppose $\Sigma$ is a crystallographic root system. If $(F,R)$ is the additive or multiplicative formal group law over $R,$ then $\Delta_{I_w}$ does not depend on the choice of reduced sequence of $w.$ In this case, it corresponds to the classical Demazure operator $\Delta_w$ of \cite[\S 3 and \S 9]{D73}. For other formal group laws, $\Delta_{I_w}$ depends on the choice of $I_w$ (see \cite[Thm. 3.9]{CPZ}).	
\end{rem}}

\begin{prop}{\label{prop:various-relations}}{\textup{(see} \cite[Prop. 3.8]{CPZ} \textup{and} \cite[\S 3]{D73}\textup{)}} The following formulas hold for any $u,v\in R\llbracket\Lambda\rrbracket_{F}$, $\alpha\in\Sigma$, and $w\in W.$
	\begin{enumerate}
		\item\label{uno} $\Delta_\alpha(1)=0$, \quad  $\Delta_\alpha(u)x_\alpha=u-s_\alpha(u)$;
		\item $\Delta_\alpha^2(u)x_\alpha=\Delta_\alpha(u) +\Delta_{-\alpha}(u)$,  \quad $\Delta_\alpha(u)x_\alpha=\Delta_{-\alpha}(u)x_{-\alpha}$;
		\item $s_\alpha\Delta_\alpha(u)=-\Delta_{-\alpha}(u),$ \quad $\Delta_\alpha s_\alpha(u)=-\Delta_\alpha(u)$;
		\item\label{fouro} $\Delta_\alpha(uv)=\Delta_\alpha(u)v+u\Delta_\alpha(v)-\Delta_\alpha(u)\Delta_\alpha(v)x_\alpha=\Delta_\alpha(u)v+s_\alpha(u)\Delta_\alpha(v)$;
		\item\label{fiveo} $w\Delta_\alpha w^{-1}(u)=\Delta_{w(\alpha)}(u)$.
	\end{enumerate}
\end{prop}
\begin{proof}The formulas (\ref{uno})-(\ref{fouro}) are straightforward computations using the definition of the formal Demazure operator. Relation (\ref{fiveo}) follows from \cref{prop:orthogonal-property}, which says that $ws_{\alpha}w^{-1}=s_{w(\alpha)}$. 
\end{proof}

Let $(\Delta_{\alpha}^{R})^\wedge\colon (S_R^*(\Lambda))^\wedge\to (S_R^*(\Lambda))^\wedge$ be the $R$-linear operator corresponding to the operator $\Delta_{\alpha}:R\llbracket\Lambda\rrbracket_{F_a}\to R\llbracket\Lambda\rrbracket_{F_a}$ with respect to the isomorphism $\phi:R\llbracket\Lambda\rrbracket_{F_a}\to (S_R^*(\Lambda))^\wedge$ of \cref{ex:additiveFGL}. The operator $(\Delta_{\alpha}^{R})^\wedge$ restricts to an $R$-linear operator $\Delta_{\alpha}^{R}\colon S_R^*(\Lambda)\to S_R^*(\Lambda)$.
If $\Sigma$ is crystallographic and $R=\mathbb{Z}$, then $\Delta_i^{\mathbb{Z}}$ is the classical Demazure operator of \cite{D73}, which acts on the symmetric algebra $S_\mathbb{Z}(\Lambda):=\bigoplus\limits_{i=1}^\infty S_\mathbb{Z}^i(\Lambda)$. If $R=\mathbb{C}$, then $\Delta_i^{\mathbb{C}}$ is the Demazure operator of \cite[Ch. IV, pp. 135]{H82}, which acts on the symmetric algebra $S(V)=S_\mathbb{C}(\Lambda):=\bigoplus\limits_{i=1}^\infty S_\mathbb{C}^i(\Lambda)$, where $V$ is the complex vector space generated by the roots $\beta\in\Sigma$.

For $\alpha_i\in \Delta$, we will use the notation $\Delta_i:=\Delta_{\alpha_i}$ and $\Delta_i^R:=\Delta_{\alpha_i}^R$. For a finite sequence of simple roots $I=(\alpha_{i_1},\dotsc,\alpha_{i_r})$, we set:
\[s_I:=s_{i_1}\dotsb s_{i_r};\quad \quad \Delta_I:=\Delta_{i_1}\circ\dotsb\circ\Delta_{i_r};\quad \quad \Delta_I^R:=\Delta_{i_1}^R\circ\dotsb\circ\Delta_{i_r}^R.\] 
We also set $\alpha_I:=\alpha_{i_1}\dotsb\alpha_{i_r}\in S_R^r(\Lambda)$. We say that $I$ is \textit{reduced} if $s_I$ is reduced in $W$. Fix a reduced decomposition of a reflection $w=s_{i_1}\dotsb s_{i_r}$, where $i_j\in\{ 1,\dotsc,n\}$. We call $I_w:=(\alpha_{i_1},\dotsc,\alpha_{i_r})$ a \textit{reduced sequence of $w$}.

\begin{lem}{\label{lem:symmetric-sum-independent-of-R}}
	Let $I=(\alpha_{i_1},\dotsc,\alpha_{i_k})\in\Upsilon_k$.
	We can write \[\Delta_i^{R}(\alpha_{i_1}\dotsb \alpha_{i_k})=\sum\limits_{I'\in\Upsilon_{k-1}} c_{I'}^R \alpha_{I'},\]
	where the $c_{I'}^R\in\mathcal{R}$ are independent of $R$.
\end{lem}
\begin{proof}
	First note that, for any $\lambda\in\Lambda$, we have $\Delta_i^{R}(\lambda)=\alpha_i^\vee(\lambda)\in\mathcal{R}$. Thus, since $\mathcal{R}$ is a ring, the result follows by induction on the degree of monomials using (\ref{fouro}) of \cref{prop:various-relations}. \qedhere
\end{proof}

\begin{prop}{\textup{(see} \cite[\S 4, Thm. 1]{D73} and \cite[Ch. IV, Prop. 1.7]{H82}\textup{)}}{\label{prop:product-trivial}}
		Let $I_w$ and $I_w'$ be reduced sequences for the same element $w\in W$. Then \[\Delta_{I_w}^R=\Delta_{I_w'}^R.\]		
\end{prop}
\begin{proof}
	By \cite[Ch. IV, Prop.~1.7]{H82}, this result holds when $R=\mathbb{C}$. Now the Proposition follows from \cref{lem:symmetric-sum-independent-of-R}, and from the fact that $\{1\}\cup \{\alpha_I\}_{I\in\Upsilon}$ is an $R$-basis of $S_R^*(\Lambda)$.
\end{proof}

\begin{rem}
	\cref{prop:product-trivial} implies that $\Delta_{I_w}^R$ is independent of the reduced sequence $I_w$ of $w\in W$. By the density of $(S_R^*(\Lambda))^\wedge$, the operator $(\Delta_{I_w}^R)^\wedge$ is independent of the reduced sequence $I_w$ of $w\in W$ as well. Thus, the operator $\Delta_{I_w}^{(R,F_a)}$ is independent of the reduced sequence $I_w$ of $w\in W$. In general, $\Delta_{I_w}^{(R,F)}$ depends on the reduced sequence $I_w$ of $w\in W$.
\end{rem}

From now on, we will simply write $\Delta_w^R$ to denote the operator $\Delta_{I_w}^R$, where $I_w$ is any reduced sequence of $w\in W$. 

\begin{prop}{\textup{(see} \cite[\S 4, Prop. 3(a)]{D73} \textup{and} \cite[Ch. IV, Lem. 2.2]{H82}\textup{)}}{\label{prop:product-trivial2}}
	Let $w,w'\in W$. We have
		\[\Delta_{w}^R\Delta_{w'}^R=\begin{cases}\Delta_{{ww'}}^R,\quad\textup{if $l(ww')=l(w)+l(w')$},\\
		0,\quad\quad\quad \textup{otherwise.}
		\end{cases}\]
		\end{prop}
\begin{proof}
	By \cite[Ch. IV, ~Lem. 2.2]{H82}, this results holds when $R=\mathbb{C}$. Now the Proposition follows from \cref{lem:symmetric-sum-independent-of-R}, and from the fact that $\{1\}\cup \{\alpha_I\}_{I\in\Upsilon}$ is an $R$-basis of $S_R^*(\Lambda)$.
\end{proof}

Since $S_R^*(\Lambda)$ injects into the integral domain $(S_R^*(\Lambda))^\wedge\simeq R\llbracket\Lambda\rrbracket_F$, the symmetric algebra $S_R^*(\Lambda)$ is an integral domain. We let $\overline{S_R^*(\Lambda)}$ be its field of fractions.  

\begin{lem}{\textup{(cf.} \cite[Lem. 3]{D73}\textup{)}}{\label{lem:operator-independent}}
	Set $\Sigma_{w}:=\Sigma^+\cap w(\Sigma^-)$ and $q_{w}= \left(\prod\limits_{\alpha\in\Sigma_{w} }\alpha\right)$. We have
	\[q_{w}\Delta_{w}^R=\mathrm{det}(w)w+\sum\limits_{w'<w}a(I_{w'})w',\]
	where the $a(I_{w'})\in \overline{S_R^*(\Lambda)}$, and the ordering $w'<w$ is with respect to the Bruhat order on $W$. 
\end{lem}
\begin{proof}
	This proof is the same as the proof of \cite[Lem. 3]{D73}; 	
	however, the proof of \cite[Lem. 3]{D73} uses the result \cite[Ch. VI, \S 1, No 6, Cor. 2]{B68}, which is a property of the Weyl group. Since we are working with real finite reflection groups, we provide an updated reference. The result \cite[Ch. VI, \S 1, No 6, Cor. 2]{B68} can be replaced by \cref{prop:positive-roots}.
\end{proof}

\begin{dfn}
Let $\mathcal{D}_R(\Lambda)$ be the $R$-algebra of endomorphisms of $S_R^*(\Lambda)$ generated by the Demazure operators $\Delta_i^R$ and by multiplication by elements in $S_R^*(\Lambda)$.
\end{dfn} 

\begin{cor}{\textup{(cf.} \cite[Cor. 1]{D73}\textup{)}}{\label{cor:basis-of-additive-Demazure-operators}}
The algebra $\mathcal{D}_R(\Lambda)$ is free as a left $S_R^*(\Lambda)$-module with basis $\{\Delta_{w}^R\}_{w\in W}$.
\end{cor}
\begin{proof}
	Since \cref{prop:various-relations} and \cref{lem:operator-independent} hold, this proof is the same as the proof of \cite[Cor. 1]{D73}.
\end{proof}

\details{
\begin{prop}{\textup{(see} \cite[Ch. IV, Prop. 1.8]{H82}\textup{)}}{\label{prop:basis-of-additive-Demazure-operators}}
	Fix a reduced sequence $I_w$ for each $w\in W$. Then $\mathcal{D}_R(\Lambda)$ is free as a $S_R^*(\Lambda)$-module with basis $\{\Delta_{I_w}^R\}_{w\in W}$.
\end{prop}
\begin{proof}
	By \cite[Ch. IV, Prop.~1.7]{H82}, $\mathcal{D}_\mathbb{C}(\Lambda)$ is free as a $S_{\mathbb{C}}^*(\Lambda)$-module with basis $\{\Delta_{I_w}^\mathbb{C}\}_{w\in W}$. Write $T^R=\sum_{w\in W}q_w^R\Delta_{I_w}^R$, with $q_w^R\in S_R^*(\Lambda)$. Suppose $T^R=0$, but $q_{w'}^R\neq0$ for some $w'\in W$. Then $q_{w'}^{\mathbb{C}}\neq 0$ by \cref{rem:zero-if-and-only-if}. Since the $\Delta_{I_w}^\mathbb{C}$ are $S_\mathbb{C}^*(\Lambda)$-linearly independent,  we must have $T^{\mathbb{C}}\neq 0$. Thus, $T^{\mathbb{C}}(\alpha_I)\neq 0$ for some ordered sequence $I$ of simple roots. Now it follows from \cref{rem:zero-if-and-only-if}  and \cref{lem:symmetric-sum-independent-of-R} that $T^R(\alpha_I)\neq 0$ as well. This contradiction shows that $\{\Delta_{I_w}^R\}_{w\in W}$ is $S_{R}^*(\Lambda)$-linearly independent in $\mathcal{D}_R(\Lambda)$.
	The fact that the $\Delta_{I_w}^R$ generate $\mathcal{D}_R(\Lambda)$ follows from (\ref{fouro}) of \cref{prop:various-relations} and \cref{rem:analogous}.
\end{proof}
}

\section{Endomorphisms of the formal group ring}\label{section:endomorphisms}

In this section, we define the associated graded ring $\mathcal{G}r_{(R,F)}^*(\Lambda)$, and we discuss the subalgebra $\mathcal{D}_{(R,F)}(\Lambda)$ of the endomorphism algebra of $R\llbracket\Lambda\rrbracket_{F}$ generated by the formal Demazure operators and by multiplication by elements in $R\llbracket\Lambda\rrbracket_{F}$. This section closely follows \cite[\S 4]{CPZ}.

If $I=(\alpha_{i_1},\dotsc,\alpha_{i_r})$ is a sequence of simple roots, we set $x_{I}:=x_{\alpha_{i_1}}\dotsb x_{\alpha_{i_r}}$. Denote by $\mathcal{I}$ the kernel of the augmentation map $R\llbracket x_\Lambda\rrbracket\to R$. Recall the kernel $\mathcal{I}_{F}$ of the augmentation map $R\llbracket\Lambda\rrbracket_{F}\to R$. By convention, we set $\mathcal{I}_{F}^i=R\llbracket\Lambda\rrbracket_{F}$ for $i\leq 0$. Recall that $R\llbracket\Lambda\rrbracket_{F}=R\llbracket x_\Lambda\rrbracket/\mathcal{J}_F$, where $\mathcal{J}_F$ is the closure of the ideal in $R\llbracket x_\Lambda\rrbracket$ generated by the elements 
\[x_0 \quad \text{and}\quad x_{\lambda_1+\lambda_2}-(x_{\lambda_1}+_Fx_{\lambda_2}) \quad \text{and}\quad f_{i,j},\]
over all $e_i\in B$, $\alpha_j\in\Delta$, and $\lambda_1,\lambda_2\in\Lambda$, where
\[f_{i,j}=\begin{cases}
e_i\mathrm{log}_F(x_{\alpha_j})-\mathrm{log}_F(x_{e_i\alpha_j}),\quad \text{$\Sigma$ noncrystallographic};\\
0,\quad\quad\quad\quad\quad\quad\quad\quad\quad\quad\quad\textcolor{white}{-} \text{$\Sigma$ crystallographic}.
\end{cases}\]
When $\Sigma$ is noncrystallographic, observe that
\[f_{i,j}=e_i\mathrm{log}_F(x_{\alpha_j})-\mathrm{log}_F(x_{e_i\alpha_j})=e_ix_{\alpha_j}- x_{e_i\alpha_j}+g_{i,j},\]
where $g_{i,j}=\sum_{k\geq 2}b_k(e_ix_{\alpha_j}^k-x_{e_i\alpha_j}^k)\in{\mathcal{I}}_F^2$. Here the $b_{k}\in R$ are the coefficients that appear in the series $\mathrm{log}_F(u)=u+\sum_{k\geq 2}b_ku^k$.

We define the associated graded ring,
\[\mathcal{G}r_{(R,F)}^*(\Lambda)=\bigoplus_{i=0}^{\infty}\mathcal{I}_{F}^i/\mathcal{I}_{F}^{i+1}.\]
We have 
\[\mathcal{I}_{F}={\mathcal{I}}/\mathcal{J}_{F}\quad\text{and}\quad \mathcal{I}_{F}^{k}/\mathcal{I}_{F}^{k+1}={\mathcal{I}}^k/(\mathcal{J}_{F}\cap {\mathcal{I}}^k+{\mathcal{I}}^{k+1}).\]
Set $\mathcal{T}_k:=\mathcal{J}_{F}\cap {\mathcal{I}}^k+{\mathcal{I}}^{k+1}$. When $\Sigma$ is crystallographic, $\mathcal{T}_k/{\mathcal{I}}^{k+1}$ is generated by elements of the form
\[\rho\cdot x_0\quad \text{and} \quad \rho \cdot (x_\lambda+x_\mu-x_{\lambda+\mu}),\] 
where $\alpha_j\in\Delta$, $\lambda,\mu\in\Lambda$, and $\rho$ is a monomial of degree $k-1$. 
When $\Sigma$ is noncrystallographic, $\mathcal{T}_k/{\mathcal{I}}^{k+1}$ is generated by elements of the form 
\[\rho\cdot x_0\quad \text{and} \quad \rho \cdot (x_\lambda+x_\mu-x_{\lambda+\mu})\quad \text{and}\quad\rho\cdot (e_ix_{\alpha_j}-x_{e_i\alpha_j}),\] 
where $e_i\in B$, $\alpha_j\in\Delta$, $\lambda,\mu\in\Lambda$, and $\rho$ is a monomial of degree $k-1$. In either case, the generators of $\mathcal{T}_k/{\mathcal{I}}^{k+1}$ allow us to write any element in \[\mathcal{I}_{F}^k/\mathcal{I}_{F}^{k+1}={\mathcal{I}}^{k}/\mathcal{T}_k\simeq ({\mathcal{I}}^k/{\mathcal{I}}^{k+1})/(\mathcal{T}_k/{\mathcal{I}}^{k+1})\] as an $R$-linear combination of elements of the form $x_{I}+\mathcal{I}_{F}^{k+1}$, where $I\in\Upsilon_k$.

\begin{lem}{\label{lem:additive-iso}}{\textup{(cf.} \cite[Lem. 4.2]{CPZ}\textup{)}}
	The morphism of graded $R$-algebras
	\[\psi:S_R^*(\Lambda)\to \mathcal{G}r_{(R,F)}^*(\Lambda)\]
	defined by sending $\alpha_I$ to $x_{I}+\mathcal{I}_{F}^{k+1}$, $I\in\Upsilon_k$, and extended by $R$-linearity, is an isomorphism.
\end{lem}
\begin{proof} 
	Every element in $S_R^k(\Lambda)$, $k>0$, can be written uniquely as an $R$-linear combination of monomials of the form $\alpha_I$, where $I\in\Upsilon_k$. The map $\psi$ sends the monomial $\alpha_I$ to the element $x_{I}+\mathcal{I}_{F}^{k+1}\in  \mathcal{G}r_{(R,F)}^*(\Lambda)$, and, therefore, $\psi$ is well-defined. Since $\mathcal{G}r_{(R,F)}^*(\Lambda)$ is generated as a graded unital $R$-algebra by the elements $x_{\alpha_i}+\mathcal{I}_{F}^2$, where $\alpha_i\in\Delta$, the map $\psi$ is surjective. We define a map in the other direction.
	
	Define the map of $R$-modules
	\[\tilde{\theta}_k:{\mathcal{I}}^k/{\mathcal{I}}^{k+1}\to S_R^k(\Lambda),\]
	by sending a monomial of degree $k$ in some $x_\lambda$'s to the product of $\lambda$'s involved. 
	The map $\tilde{\theta}_k$ factors through the quotient $({\mathcal{I}}^k/{\mathcal{I}}^{k+1})/(\mathcal{T}_k/{\mathcal{I}}^{k+1})$, hence giving a well-defined map ${\theta}_k:{\mathcal{I}}^{k}/\mathcal{T}_k\to S_R^k(\Lambda)$. Equivalently, $\tilde{\theta}_k$ gives a well-defined map ${\theta}_k:\mathcal{I}_{F}^k/\mathcal{I}_{F}^{k+1}\to S_R^k(\Lambda)$, since ${\mathcal{I}}^{k}/\mathcal{T}_k=\mathcal{I}_{F}^k/\mathcal{I}_{F}^{k+1}$. The sum ${\theta}:=\oplus {\theta}_k$ is also well-defined. Since every element in $\mathcal{I}_{F}^k/\mathcal{I}_{F}^{k+1}$ is an $R$-linear combination of elements of the form $x_{I}+\mathcal{I}_{F}^{k+1}$, where $I\in\Upsilon_k$, we see that $\theta$ is the desired inverse map.  \qedhere
\end{proof}
\begin{rem}\label{rem:uniquely-in-graded-ring}
	Since every element in $S_R^k(\Lambda)$, $k>0$, can be written uniquely as an $R$-linear combination of monomials of the form $\alpha_{I}$, where the $I\in\Upsilon_k$, it follows from the isomorphism $\psi$ of \cref{lem:additive-iso} that every element in $\mathcal{I}_{F}^{k}/\mathcal{I}_{F}^{k+1}$ can be written \textit{uniquely} as an $R$-linear combination of elements of the form $x_{I}+\mathcal{I}_{F}^{k+1}$, where $I\in\Upsilon_k$.
\end{rem}

\begin{lem}\label{lem:ideal-grading-reduction}{\textup{(cf.} \cite[Prop. 4.6 (1)]{CPZ}\textup{)}}
	For any root $\alpha\in\Sigma$, the operator $\Delta_\alpha$ sends $\mathcal{I}_{F}^{k}$ to $\mathcal{I}_{F}^{k-1}$.
\end{lem}
\begin{proof}
	This follows directly from (\ref{fouro}) of \cref{prop:various-relations}.
\end{proof}
By \cref{lem:ideal-grading-reduction}, the operator $\Delta_\alpha$ induces an $R$-linear operator of degree $-1$ on the graded ring $\mathcal{G}r_{F}^*(\Lambda),$ denoted $\mathcal{G}r\Delta_\alpha$, for all $\alpha\in\Sigma$.

\begin{prop}{\label{prop:additive-reduction}}{\textup{(see} \cite[Prop. 4.4]{CPZ}\textup{)}}
	For each root $\alpha\in\Sigma$, the isomorphism of \cref{lem:additive-iso} exchanges the operator $\mathcal{G}r\Delta_{\alpha}$ on $\mathcal{G}r_{(R,F)}^*(\Lambda)$ with $\Delta_{\alpha}^{R}$ on the symmetric algebra $S_R^*(\Lambda)$.
\end{prop}
\begin{proof} By \cref{rem:uniquely-in-graded-ring}, every element in $\mathcal{I}_{F}^{k}/\mathcal{I}_{F}^{k+1}$, $k>0$, can be written uniquely as an $R$-linear combination of elements of the form $x_{{I}}+\mathcal{I}_{F}^{k+1}$, where $I\in\Upsilon_k$. Now the result follows by induction on the degree of monomials using (\ref{fouro}) of \cref{prop:various-relations}.\qedhere
\end{proof}

We recall the following fact concerning topological modules.

\begin{rem}{\textup{(cf.} \cite[\S 10]{AM69}\textup{)}}
	Let $S$ be a topological ring in the $I$-adic topology for some ideal $I$, and suppose $M$ is an $S$-module with filtration
	\[M\supseteq \dotsb \supseteq M^{(i)}\supseteq M^{(i+1)}\supseteq \dotsb \supseteq 0,\]
	such that $IM^{(i)}\subseteq M^{(i+1)}$ for all $i\in\mathbb{Z}$, and $M=\cup_iM^{(i)}$. There is a natural topology on $M$ induced by this filtration, turning $M$ into a topological $S$-module. This topology is generated by elements of the form $x+M^{(i)}$, where $x\in M$ and $i\in \mathbb{Z}$. The $S$-module $M$ is complete with respect to this topology if and only if the canonical $S$-module homomorphism $M\to  \lim\limits_{\stackrel{\longleftarrow}{i}}(M/M^{(i)})$ is an $S$-module isomorphism, and $M$ is Hausdorff with respect to this topology if and only if $\cap_{i}M^{(i)}=(0)$. 
\end{rem}

\begin{dfn}{\textup{(see} \cite[Def. 4.5]{CPZ}\textup{)}}
	Let $\mathcal{D}_{(R,F)}(\Lambda)$ be the algebra of $R$-linear endomorphisms of $R\llbracket\Lambda\rrbracket_F$ generated by the formal Demazure operators $\Delta_\alpha$ for all roots $\alpha$, and by multiplication by elements in $R\llbracket\Lambda\rrbracket_F$.
\end{dfn}

Let $\mathcal{D}_{(R,F)}(\Lambda)^{(i)}$ be the left $R\llbracket\Lambda\rrbracket_F$-submodule of $\mathcal{D}_{(R,F)}(\Lambda)$ generated by elements of the form $u\Delta_{\alpha_{i_1}}\dotsb \Delta_{\alpha_{i_k}}$, where $u\in\mathcal{I}_{F}^j$, $j-k\geq i$, and the $\alpha_{i_r}\in\Sigma$. This defines a filtration on $\mathcal{D}_{(R,F)}(\Lambda)$, 
\[\mathcal{D}_{(R,F)}(\Lambda)\supseteq\dotsb\supseteq  \mathcal{D}_{(R,F)}(\Lambda)^{(i)}\supseteq \mathcal{D}_{(R,F)}(\Lambda)^{(i+1)}\supseteq \dotsb\supseteq 0,\]
with the property $\mathcal{D}_{(R,F)}(\Lambda)=\cup_i \mathcal{D}_{(R,F)}(\Lambda)^{(i)}$. We define the associated graded $R\llbracket\Lambda\rrbracket_F$-module,
\[\mathcal{G}r^*\mathcal{D}_{(R,F)}(\Lambda)=\bigoplus_{i=-\infty}^\infty\mathcal{D}_{(R,F)}(\Lambda)^{(i)}/\mathcal{D}_{(R,F)}(\Lambda)^{(i+1)}.\]

\begin{prop}{\textup{(see} \cite[Prop. 4.6]{CPZ}\textup{)}}{\label{prop:filtration-properties}}
	The filtration on $\mathcal{D}_{(R,F)}(\Lambda)$ above has the following properties:
	\begin{enumerate}
		\item\label{itm:filt-one} For any root $\alpha$, we have 
		\[\Delta_\alpha\mathcal{D}_{(R,F)}(\Lambda)^{(i)}\subseteq \mathcal{D}_{(R,F)}(\Lambda)^{(i-1)};\quad\mathcal{D}_{(R,F)}(\Lambda)^{(i)}\Delta_\alpha\subseteq \mathcal{D}_{(R,F)}(\Lambda)^{(i-1)}.\]
		\item\label{itm:filt-two} For any integer $j$, we have $\mathcal{I}_{F}^j\mathcal{D}_{(R,F)}(\Lambda)^{(i)}\subseteq \mathcal{D}_{(R,F)}(\Lambda)^{(i+j)}$.
		\item\label{itm:filt-three} For any operator $D\in \mathcal{D}_{(R,F)}(\Lambda)^{(i)}$, we have $D(\mathcal{I}_{F}^j)\subseteq \mathcal{I}_{F}^{(i+j)}$.
		\item\label{itm:filt-four} $\cap_i\mathcal{D}_{(R,F)}(\Lambda)^{(i)}={0}$.
	\end{enumerate}
\end{prop}
\begin{proof}
	Since (\ref{fouro}) of \cref{prop:various-relations} and \cref{lem:ideal-grading-reduction} hold, and $R\llbracket\Lambda\rrbracket_F$ is Hausdorff, the proof of this result is the same as the proof of \cite[Prop. 4.6]{CPZ}. 
\end{proof}

Point (\ref{itm:filt-three}) of \cref{prop:filtration-properties} implies that the graded module $\mathcal{G}r^*\mathcal{D}_{(R,F)}(\Lambda)$ acts by graded endomorphisms on the graded ring $\mathcal{G}r^*_{(R,F)}(\Lambda)$, which is isomorphic to $S_R^*(\Lambda)$ by \cref{lem:additive-iso}. Recall the algebra of Demazure operators $\mathcal{D}_R(\Lambda)$ on $S_R^*(\Lambda)$ considered in \cref{section:localization}. By \cref{prop:additive-reduction}, the class $\left[\Delta_\alpha\right]$ of $\Delta_\alpha$ in the quotient $\mathcal{D}_{(R,F)}(\Lambda)^{(-1)}/\mathcal{D}_{(R,F)}(\Lambda)^{(0)}$ acts by $\Delta_\alpha^R$ on $S_R^*(\Lambda)$, and the class $[x_\alpha]$ of multiplication by $x_\alpha$ in $\mathcal{D}_{(R,F)}(\Lambda)^{(1)}/\mathcal{D}_{(R,F)}(\Lambda)^{(2)}$ acts by multiplication by $\alpha$ on $S_R^*(\Lambda)$. Moreover, $\mathcal{G}r^*\mathcal{D}_{(R,F)}(\Lambda)$ is generated as a graded $\mathcal{G}r_{(R,F)}^*(\Lambda)$-module by the $[x_\alpha]$ and the $\left[\Delta_\alpha\right]$ over all $\alpha\in\Sigma$. We have justified the following result:

\begin{prop}{\textup{(see} \cite[Prop. 4.8]{CPZ}\textup{)}}{\label{prop:graded-operator-basis}}
	The graded module $\mathcal{G}r^*\mathcal{D}_{(R,F)}(\Lambda)$ over the graded ring $\mathcal{G}r_{(R,F)}^*(\Lambda)\simeq S_R^*(\Lambda)$ is isomorphic to $\mathcal{D}_R(\Lambda)$.
\end{prop}

For each element $w\in W$, fix a reduced sequence of simple roots $I_w$.
\begin{thm}{\textup{(see} \cite[Lem. 4.11 (3)]{CPZ}\textup{)}}{\label{thm:operator-basis}}
	The operators $\Delta_{I_w}$ form a basis of  $\mathcal{D}_{R,F}(\Lambda)$ as a left $R\llbracket\Lambda\rrbracket_F$-module.
\end{thm}
\begin{proof}
By \cref{prop:product-trivial} and \cref{cor:basis-of-additive-Demazure-operators}, the operators $\Delta_{I_w}^R$ form a basis of $\mathcal{D}_R(\Lambda)$ over $S_R^*(\Lambda)$. Therefore, by \cref{prop:graded-operator-basis}, the classes $\left[\Delta_{I_w}\right]$ form a basis  of $\mathcal{G}r^*\mathcal{D}_{(R,F)}(\Lambda)$ over $\mathcal{G}r_{(R,F)}^*(\Lambda)$. By (\ref{itm:filt-four}) of \cref{prop:filtration-properties}, $\mathcal{D}_{(R,F)}(\Lambda)$ is Hausdorff. Moreover, $R\llbracket\Lambda\rrbracket_F$ is complete. Therefore, the rest of this proof is the same as the proof of \cite[Lem. 4.11]{CPZ}.
\end{proof}

\begin{rem}
	We can view $\mathcal{D}_{R}(\Lambda)$ as the subalgebra of the endomorphism algebra of $(S_R^*(\Lambda))^\wedge$ generated by the Demazure operators $\Delta_\alpha^R$ and by multiplication by elements in $S_R^*(\Lambda)$.
	If $(R,F)=(R,F_a)$ is the additive formal group law over $R$, then, through the isomorphism $\phi:R\llbracket\Lambda\rrbracket_{F_a}\to (S_R^*(\Lambda))^\wedge$ of \cref{ex:additiveFGL}, we can view $\mathcal{D}_{(R,F_a)}(\Lambda)$ as the completion of $\mathcal{D}_R(\Lambda)$ at the kernel of the map $\mathcal{D}_R(\Lambda)\to R$, sending $\alpha^*\mapsto 0$ for all $\alpha\in\Sigma$. Here, we use $\alpha^*$ to denote multiplication by $\alpha$.
\end{rem}

\details{Let $A$ be a ring, and let $\mathcal{I}$ be an ideal of $A$ such that $A$ is complete and Hausdorff for the $\mathcal{I}$-adic topology. Let $M$ be an $A$-module together with an $\mathcal{I}$-filtration $(M^{(i)})_{i\in\mathbb{Z}}$. The filtration is called \textit{exhaustive} if $M=\cup_iM^{(i)}$. Consider the associated graded ring $\mathcal{G}r^*A$, and the associated graded $\mathcal{G}r^*A$-module $\mathcal{G}r^*M$. For any exhaustive filtration, let $\nu(\lambda)$ denote the largest integer such that $\lambda\in M^{\nu(\lambda)}$, and let $\overline{\lambda}$ denote the class of $\lambda$ in $M^{(\nu(\lambda))}/M^{(\nu(\lambda)+1)}$ if $\nu(\lambda)$ exists and $0$ if $\nu(\lambda)$ does not exist.
	
	\begin{lem}{\textup{(see} \cite[Lem. 4.10]{CPZ}\textup{)}}
		Assume $A$ be a Hausdorff complete ring, let $M$ is a Hausdordff $A$-module with an exhaustive $\mathcal{I}$-filtration, and let $\lambda_1,\dotsc,\lambda_t$ be elements of $M$. If $\mathcal{G}r^*M$ is a free finitely-generated $\mathcal{G}r^*A$-module with basis $\{\overline{\lambda}_i\}_{i=1}^t$, then $M$ is a free finitely-generated $A$-module with basis $\{\lambda_i\}_{i=1}^t$.
	\end{lem}\details}

\details{Let $D(\Lambda)$ be the subalgebra of $R$-linear of $R$-linear endomorphisms of $\mathcal{S}$ generated by the $\Delta_\alpha$ for all roots $\alpha\in\Sigma$ and by multiplication by elements of $\mathcal{S}.$ Note that by (\ref{uno}) of \cref{prop:various-relations}, $D(\Lambda)$ contains $s_\alpha.$ Let $D(\Lambda)^{(i)}$ be the $\mathcal{S}$-module of $D(\Lambda)$ generated by the $u\Delta_{\alpha_1}\dotsb\Delta_{\alpha_n},$ where $u\in \mathcal{I}_F^{m}$, such that $m-n\geq i.$ This defines a filtration on $D(\Lambda)$,
	\[D(\Lambda)\supseteq\dotsb\supseteq D(\Lambda)^{(i)}\supseteq D(\Lambda)^{(i+1)} \supseteq \dotsb \supseteq 0. \]

	\begin{prop}{\label{prop:filtration}}{(see \cite[Prop. 4.6, (1)]{CPZ})}
		The filtration on $\mathcal{D}$ has the following property. For any any root $\alpha,$ we have 
		\[\Delta_\alpha D(\Lambda)^{(i)}\subseteq D(\Lambda)^{(i-1)},\qquad D(\Lambda)^{(i)}\Delta_\alpha\subseteq D(\Lambda)^{(i-1)}. \]
	\end{prop}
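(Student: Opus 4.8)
The plan is to reduce both containments to a check on the $\mathcal{S}$-module generators $g=u\,\Delta_{\alpha_1}\dotsb\Delta_{\alpha_n}$ of $D(\Lambda)^{(i)}$, where $u\in\mathcal{I}_F^m$ with $m-n\geq i$, and then combine two elementary degree facts with an operator form of the Leibniz rule. The two degree facts are: first, every $\Delta_\beta$ sends $\mathcal{I}_F^m$ into $\mathcal{I}_F^{m-1}$ (recorded just before the statement); second, each reflection $s_\beta$ is a ring automorphism of $\mathcal{S}$ fixing the augmentation, hence $s_\beta(\mathcal{I}_F^m)=\mathcal{I}_F^m$. Since left multiplication by $c\in\mathcal{S}$ carries a generator $u\,\Delta_{\alpha_1}\dotsb\Delta_{\alpha_n}$ to $(cu)\,\Delta_{\alpha_1}\dotsb\Delta_{\alpha_n}$ with $cu\in\mathcal{I}_F^m$ again, every element of $D(\Lambda)^{(i)}$ is a finite sum of such generators; by $R$-linearity of $\Delta_\alpha$ and of right composition it then suffices to treat a single generator $g$.

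For the right-hand containment this is immediate: $g\,\Delta_\alpha=u\,\Delta_{\alpha_1}\dotsb\Delta_{\alpha_n}\Delta_\alpha$ is again a generator, now with $n+1$ Demazure factors and the same $u\in\mathcal{I}_F^m$, and $m-(n+1)=(m-n)-1\geq i-1$, so $g\,\Delta_\alpha\in D(\Lambda)^{(i-1)}$.

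For the left-hand containment I would first record the operator identity read off from the second formula in (\ref{fouro}) of \cref{prop:various-relations},
\[\Delta_\alpha\circ M_u = M_{\Delta_\alpha(u)} + M_{s_\alpha(u)}\circ\Delta_\alpha,\]
where $M_u$ denotes multiplication by $u\in\mathcal{S}$; applying both sides to $v$ reproduces $\Delta_\alpha(uv)=\Delta_\alpha(u)v+s_\alpha(u)\Delta_\alpha(v)$. Composing on the right with $\Delta_{\alpha_1}\dotsb\Delta_{\alpha_n}$ yields
\[\Delta_\alpha\, g = \Delta_\alpha(u)\,\Delta_{\alpha_1}\dotsb\Delta_{\alpha_n} + s_\alpha(u)\,\Delta_\alpha\Delta_{\alpha_1}\dotsb\Delta_{\alpha_n}.\]
In the first summand $\Delta_\alpha(u)\in\mathcal{I}_F^{m-1}$ stands in front of $n$ Demazure factors, with $(m-1)-n=(m-n)-1\geq i-1$; in the second, $s_\alpha(u)\in\mathcal{I}_F^m$ stands in front of $n+1$ factors, with $m-(n+1)=(m-n)-1\geq i-1$. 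Hence both summands lie in $D(\Lambda)^{(i-1)}$, and therefore so does $\Delta_\alpha\,g$.

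I do not expect a genuine obstacle here; the only points requiring care are the bookkeeping that justifies the reduction to generators—namely that a left $\mathcal{S}$-multiple of a generator is again a generator of the same filtration level, so the $R$-linear operator $\Delta_\alpha$ may be applied termwise—and the verification that the convention $\mathcal{I}_F^i=\mathcal{S}$ for $i\leq 0$ keeps all the displayed inequalities valid in the edge case $m-1\leq 0$.
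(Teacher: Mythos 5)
Your proposal is correct and takes essentially the same route as the paper: the paper's entire proof is the one-line remark that the claim follows from part (\ref{fouro}) of \cref{prop:various-relations}, and your argument is exactly the careful expansion of that remark---reduction to the $\mathcal{S}$-module generators, the immediate right-composition case, and the operator identity $\Delta_\alpha\circ M_u=M_{\Delta_\alpha(u)}+M_{s_\alpha(u)}\circ\Delta_\alpha$ combined with the degree facts $\Delta_\beta(\mathcal{I}_F^m)\subseteq\mathcal{I}_F^{m-1}$ and $s_\beta(\mathcal{I}_F^m)=\mathcal{I}_F^m$ for the left case.
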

	\begin{proof}
		\details{\textcolor{Orange}{The proof is unchanged.}}This follows from (\ref{fouro}) of \cref{prop:various-relations}.
	\end{proof}
}

\details{
	\begin{lem}\label{lem:independence}
		Let $u\in R\llbracket\Lambda\rrbracket_{F_a}$, such that $\phi(u)\in S_\mathcal{R}^m(\Lambda)$ is homogeneous of degree $m>0$. Then, for all sequences of simple roots $I$ of length $m$, the element $\Delta_I^{(R,F_a)}(u)\in \mathcal{R}$, and it is independent of $R$.
	\end{lem}
	\begin{proof}
		First note that, for $\alpha\in\Sigma$ and $\lambda\in\Lambda$, the element $\Delta_{\alpha}^{(R,F_a)}(x_\lambda)=\alpha^\vee(\lambda)\in\mathcal{R}$ is independent of $R$. Note also that, for any root $\alpha$, either $\Delta_\alpha^{(R,F_a)}(u)=0$ or the element $\phi(\Delta_\alpha^{(R,F_a)}(u))\in S_\mathcal{R}^{m-1}(\Lambda)$. Now the result follows by induction on the degree of monomials using (\ref{fouro}) of \cref{prop:various-relations}.
	\end{proof}
}

\section{The formal affine Demazure algebra}\label{section:Demazure-elements}
 In this section, we define and discuss the formal affine Demazure algebra $\mathbf{D}_{F}$. All results discussed in this section are extensions of results in \cite{CZZ1} and \cite{HMSZ} to all real finite reflection groups.
 
 By (\ref{itm:same-two}) of \cref{lem:same-properties}, the formal group ring $R\llbracket\Lambda\rrbracket_F$ is an integral domain. Thus, we let $\mathcal{Q}^{(R,F)}_{\Lambda}$ be the localization of $R\llbracket\Lambda\rrbracket_F$ at the multiplicative subset generated by the elements $x_\alpha$ for all $\alpha\in\Sigma$. The ring $\mathcal{Q}^{(R,F)}_{\Lambda}$ is an integral domain since $R\llbracket\Lambda\rrbracket_F$ is an integral domain, and we call $\mathcal{Q}^{(R,F)}_{\Lambda}$ the \textit{localized formal group ring}. As $W$ acts on $R\llbracket\Lambda\rrbracket_F$ by permuting the roots $\alpha\in\Sigma,$ it also acts on $\mathcal{Q}^{(R,F)}_{\Lambda}$. 
 
 \begin{lem}{\label{lem:injective}}{\textup{(see} \cite[Lem. 3.2]{CZZ1}\textup{)}}
 	The localization map $R\llbracket\Lambda\rrbracket_F\to \mathcal{Q}^{(R,F)}_{\Lambda}$ is injective.
 \end{lem}
 \begin{proof}\details{\textcolor{Orange}{The proof is unchanged.}}
 	This follows from the fact that $R\llbracket\Lambda\rrbracket_F$ is an integral domain.
 \end{proof}

 Following \cite[Def. 6.1]{HMSZ} and \cite[\S 4.1]{KK86}, we define the \textit{twisted formal group ring} to be the $R$-module $\mathcal{Q}^{(R,F)}_{\Lambda,W}:=\mathcal{Q}^{(R,F)}_{\Lambda}\otimes_R R[W]$ with multiplication given by
\[(q\delta_w)(q'\delta_{w'})=qw(q')\delta_{ww'},\quad w,w'\in W,\text{ } q,q'\in \mathcal{Q}^{(R,F)}_{\Lambda},\]
and extended by $R$-linearity. Here, $\delta_w$ is the element in $R[W]$ corresponding to $w\in W$ (so that $\delta_w\delta_{w'}=\delta_{ww'}$ for all $w,w'\in W$). We will denote the identity $\mathbf{1}=\delta_1$. The set $\{\delta_w\}_{w\in W}$ is the canonical basis of the group ring $R[W]$, and, hence, it is a basis of $\mathcal{Q}^{(R,F)}_{\Lambda,W}$ as a left $\mathcal{Q}^{(R,F)}_{\Lambda}$-module.

Consider the $R$-subalgebra, $D_{R\llbracket\Lambda\rrbracket_F}$, of $\mathcal{Q}^{(R,F)}_{\Lambda,W}$ generated by the elements
\[X_\alpha^{(R,F)}=\tfrac{1}{x_\alpha}(\mathbf{1}-\delta_{\alpha})\text{ for all roots }\alpha\in\Sigma,\]
where $\delta_\alpha=\delta_{s_\alpha}$ for all $\alpha\in\Sigma$.
Following \cite[Def. 6.2 and Def. 6.3]{HMSZ}, we call the elements $X_\alpha^{(R,F)}$, $\alpha\in\Sigma$, \textit{formal Demazure elements} and
we call $D_{R\llbracket\Lambda\rrbracket_F}$ the \textit{formal Demazure algebra}. We call the $R$-subalgebra of $\mathcal{Q}^{(R,F)}_{\Lambda,W}$ generated by $D_{R\llbracket\Lambda\rrbracket_F}$ and $R\llbracket\Lambda\rrbracket_F$ the \textit{formal affine Demazure algebra} and denote it $\mathbf{D}_{R\llbracket\Lambda\rrbracket_F}$. Often, we denote the formal Demazure elements by $X_\alpha$, ommiting the superscript, and we denote the formal affine Demazure algebra by $\mathbf{D}_{F}$.
For any simple root $\alpha_i\in\Delta$, we define $X_i:=X_{\alpha_i}$ and $\delta_i:=\delta_{s_{\alpha_i}}$. For any sequence of simple roots $I=(\alpha_{i_1},\dotsc,\alpha_{i_r})$, we will use the the notation $\delta_I:=\delta_{i_1}\dotsb\delta_{i_r}$ and $X_I:=X_{i_1}\dotsb X_{i_r}$.

\begin{lem}{\textup{(see} \cite[Lem. 5.8]{CZZ1}\textup{)}}
	The formal affine Demazure algebra $\mathbf{D}_{F}$ coincides with the $R$-subalgebra of $\mathcal{Q}_W^{(R,F)}$ generated by the elements of $R\llbracket\Lambda\rrbracket_F$ and the formal Demazure elements $X_i$, $i\in\{1,\dotsc,n\}$.
\end{lem}
\begin{proof}
	By \cref{lem:root-conjugation}, any root $\alpha\in\Sigma$ can be written $\alpha=w(\alpha_i)$ for some $\alpha_i\in\Delta$ and $w\in W$. Now this result follows by the proof \cite[Lem. 5.8]{CZZ1}.
\end{proof}

\begin{lem}{\label{lem:coeff_in_Q}}{\textup{(see} \cite[Lem.~5.4]{CZZ1}\textup{)}}
	Given a reduced sequence $I_v$ of $v\in W$ of length $l(I_v),$ let
	\[X_{I_v}=\sum\limits_{w\in W}a_{v,w}\delta_w\]
	for some $a_{v,w}\in\mathcal{Q}_\Lambda^{(R,F)}.$ Then
	\begin{enumerate}[label=(\alph*)]
		\item\label{itm:a} $a_{v,w}=0$ unless $w\leq v$ with respect to the Bruhat order on $W;$
		\item\label{itm:b} $a_{v,v}=(-1)^{l(I_v)}\prod\limits_{\alpha\in v(\Sigma^-)\cap\Sigma^+}x_{\alpha}^{-1}$.
	\end{enumerate}
\end{lem}
\begin{proof} This proof is the same as the proof of \cite[Lem.~5.4]{CZZ1}; however, the proof of \cite[Lem.~5.4]{CZZ1} uses the results \cite[Th. 1.1, III, (ii)]{D77} and \cite[Ch. VI, \S 1, No 6, Cor. 2]{B68}, which are properties of the Weyl group. Since we are working with real finite reflection groups, we provide updated references. The result \cite[Th. 1.1, III, (ii)]{D77} can be replaced by \cref{thm:subword}, and instead of \cite[Ch. VI, \S 1, No 6, Cor. 2]{B68}, we use \cref{prop:positive-roots}. \details{\textcolor{Red}{Same proof. Some references changed.}
	We proceed by induction on the length $l$ of $v.$
	The lemma obviously holds when $I_v=\emptyset,$ the empty sequence, since $X_\emptyset=\mathbf{1}.$
	
	Let $I_v=(\alpha_{i_1},\dotsc,\alpha_{i_r})$ be a reduced sequence of $v$, and let $\beta=\alpha_{i_1}.$ Then $(\alpha_{i_2},\dotsc,\alpha_{i_r})$ is a reduced sequence of $v'=s_{\beta}(v)$, and we have
	\begin{enumerate}
		\item\label{itm:1} $w\leq v'$ implies $w\leq v$ and $s_\beta(w)\leq v,$
		\item\label{itm:2} $\{\beta\}\cup s_\beta(v'(\Sigma_-)\cap\Sigma_+)=v(\Sigma_-)\cap\Sigma_+,$
		\item\label{itm:3} $w^{-1}\leq v$ if and only if $w\geq v^{-1}.$
	\end{enumerate}
	The properties (\ref{itm:1}) and (\ref{itm:3}) are consequences of the fact that elements smaller than $v$ are the elements $w$ obtained by taking a subsequence of a reduced sequence of $v,$ by \textcolor{red}{(new reference:)} \cite[Prop. 3.1.2]{BB05}. Property (\ref{itm:2}) follows from \textcolor{red}{[Cannot find this reference.]}
	
	So we compute
	\begin{align*}
	X_{I_v}&=\tfrac{1}{x_{\beta}}(\mathbf{1}-\delta_\beta)\sum\limits_{w\leq v'}a_{v',w}\delta_w=\sum\limits_{w\leq v'}x_\beta^{-1}a_{v',w}\delta_w-\sum\limits_{w\leq v'}x_\beta^{-1}s_\beta(a_{v',w})\delta_{s_\beta(w)}\\
	&\mathop{=}\limits^{(\ref{itm:2})}x_{\beta}^{-1}s_\beta(a_{v',v})\delta_v+\sum\limits_{w<v}a_{v,w}\delta_w.
	\end{align*}
	Therefore, part (\ref{itm:a}) holds and $a_{v,v}=-x_\beta^{-1}s_\beta(a_{v',v'}).$ Hence the expression $a_{v,v}$ in part (\ref{itm:b}) follows from (\ref{itm:2}). Property (\ref{itm:3}) implies (\ref{itm:c}) by applying the anti-involution sending $\delta_w$ to $\delta_{w^{-1}}$, thus identifying $a_{v,w}'=a_{v,w^{-1}}.$
}
\end{proof}

\begin{rem}{\label{rem:sums-of-products}}
	By definition, we have $X_\alpha=\tfrac{1}{x_\alpha}(1-\delta_\alpha)$. Therefore, since $W$ permutes the roots, the $a_{v,w}$ of \cref{lem:coeff_in_Q} are sums of products of elements of the form $\tfrac{1}{x_\alpha}$, where $\alpha\in\Sigma$.
\end{rem}

\begin{cor}{\label{cor:base-change}}{\textup{(see} \cite[Cor. 5.6]{CZZ1}\textup{)}}
	Choose a reduced sequence $I_w$ for each $w\in W$. The elements $\{X_{I_w}\}_{w\in W}$ form a basis of $\mathcal{Q}^{(R,F)}_{\Lambda,W}$ as a left $\mathcal{Q}^{(R,F)}_{\Lambda}$-module, and the element $\delta_v$ decomposes as $\sum_{w\leq v}b_{w,v}X_{I_w}$ with $b_{w,v}\in\mathcal{Q}^{(R,F)}_{\Lambda}$. Furthermore,
	\[b_{v,v}=(-1)^{l(I_v)}\prod_{\alpha\in v(\Sigma^-)\cap\Sigma^+}x_\alpha.\]
\end{cor}
\begin{proof}
	Since \cref{lem:coeff_in_Q} holds, this proof is the same as the proof of \cite[Cor. 5.6]{CZZ1}.
\end{proof}
\details{\begin{rem}
	Observe that the coefficient $a_{v,v}$ does not depend on the choice of reduced sequence $I_v$ of $v.$
\end{rem}}

\details{We state the following corollary because it is used in the proof of \cref{lem:relation-in-S}.
\begin{cor}{\label{cor:coeff_in_Q2}}{(see \cite[Cor. 5.6]{CZZ1})}
	For each $w\in W$, let $I_w$ be a reduced sequence of $w.$ The elements $(X_{I_w})_{w\in W}$ form a basis of $\mathcal{Q}_W$ as a left (resp. right) $\mathcal{Q}$-module. The element $\delta_v$ decomposes as $\sum\limits_{w\leq v}b_{w,v}X_{I_w}$ with $b_{w,v}$ in $\mathcal{Q}.$ Furthermore,
	\[b_{v,v}=(-1)^l\prod\limits_{\alpha\in v(\Sigma_-)\cap \Sigma_+}x_\alpha.\]
\end{cor}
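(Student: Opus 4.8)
The plan is to read off everything from the triangularity already established in \cref{lem:coeff_in_Q}. Since $\mathcal{Q}_W=\mathcal{Q}\otimes_R R[W]$, the family $\{\delta_w\}_{w\in W}$ is a basis of $\mathcal{Q}_W$ both as a left and as a right $\mathcal{Q}$-module, so I would argue by showing that the transition matrix between $\{\delta_w\}$ and $\{X_{I_w}\}$ is triangular with respect to the Bruhat order and has invertible diagonal entries. Invertibility of such a matrix then yields the basis statement and the inverse decomposition at once.

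First I would treat the left module. By \cref{lem:coeff_in_Q}, parts (a) and (b), we have $X_{I_v}=\sum_{w\le v}a_{v,w}\delta_w$ with $a_{v,v}=(-1)^l\prod_{\alpha\in v(\Sigma_-)\cap\Sigma_+}x_\alpha^{-1}$, which is a unit in $\mathcal{Q}$, being a signed product of the elements $x_\alpha$ that are invertible by the very definition of $\mathcal{Q}$. Choosing any linear extension of the Bruhat order and forming the matrix $M=(a_{v,w})$, the support condition makes $M$ triangular with unit diagonal, so $M$ is invertible over $\mathcal{Q}$; as $\{\delta_w\}$ is a basis, it follows that $\{X_{I_w}\}$ is a basis of $\mathcal{Q}_W$ as a left $\mathcal{Q}$-module. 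To obtain the decomposition $\delta_v=\sum_w b_{w,v}X_{I_w}$ with the correct support, I would observe that $M$ belongs to the incidence algebra of the poset $(W,\le)$, i.e. the algebra of matrices supported on pairs $w\le v$, which is closed under inversion once the diagonal entries are units; hence $M^{-1}$ is again supported on $w\le v$, giving $b_{w,v}=0$ unless $w\le v$.

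The diagonal coefficient then drops out, since for a triangular matrix the diagonal entries of the inverse are the inverses of the diagonal entries: $b_{v,v}=a_{v,v}^{-1}=(-1)^l\prod_{\alpha\in v(\Sigma_-)\cap\Sigma_+}x_\alpha$, using $((-1)^l)^{-1}=(-1)^l$. For the right module I would use \cref{lem:coeff_in_Q}, parts (b) and (c): $X_{I_v}=\sum_{w\ge v^{-1}}\delta_w a_{v,w}'$ with distinguished term $a_{v,v^{-1}}'=a_{v,v}$ a unit. Reindexing the columns by the inversion $w\mapsto w^{-1}$ and invoking the invariance of the Bruhat order under inversion (so that $w\ge v^{-1}$ becomes $w^{-1}\le v$) turns this into a Bruhat-triangular matrix with the same unit diagonal, which is therefore invertible; hence $\{X_{I_w}\}$ is also a basis as a right $\mathcal{Q}$-module.

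The one genuinely structural step, and the part I expect to need the most care, is guaranteeing that the inverse matrix is supported with respect to the Bruhat order itself rather than merely with respect to the auxiliary linear extension; this is precisely the closure of the incidence algebra of $(W,\le)$ under inversion. Everything else is routine triangular matrix inversion over the commutative ring $\mathcal{Q}$ together with the inversion-invariance of the Bruhat order.
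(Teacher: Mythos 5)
Your proposal is correct and follows essentially the same route as the paper: both proofs invoke \cref{lem:coeff_in_Q} to get a Bruhat-triangular transition matrix from $\{X_{I_w}\}$ to the basis $\{\delta_w\}$ with invertible diagonal (resp. anti-diagonal) entries, and then use the fact that the inverse of such a triangular matrix is again triangular with inverted diagonal entries to obtain the decomposition of $\delta_v$ and the formula for $b_{v,v}$. Your extra care about the incidence-algebra closure and the reindexing $w\mapsto w^{-1}$ for the right-module case simply makes explicit what the paper's proof leaves implicit.
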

	\begin{proof}The proof is unchanged. It uses \cref{lem:coeff_in_Q}.\details{
	\textcolor{Orange}{The proof is unchanged.} The matrix $(a_{v,w})_{(v,w)\in W^2}$ (resp. $(a_{v,w}')_{(v,w)\in W^2}$) is triangular with invertible coefficients on the diagonal (resp. anti-diagonal) by \cref{lem:coeff_in_Q}. It expresses the elements $X_{I_w}$ in terms of the basis $(\delta_w)_{w\in W}$ of the left (resp. right) $\mathcal{Q}$-module $\mathcal{Q}$.
	
	The decomposition of $\delta_w$ follows from the fact that the inverse of a triangular matrix is also triangular with inverse coefficients along the diagonal.}
\end{proof}}

The proofs of \cref{lem:one} and \cref{lem:square} are direct computations:
\begin{lem}\label{lem:one} \textup{(see} \cite[Lemma 6.5]{HMSZ}\textup{)} For all $\alpha\in\Sigma$, we have the following commuting relation in $\mathcal{Q}^{(R,F)}_{\Lambda,W}$,
	\[
	X_\alpha q =s_\alpha(q) X_\alpha +\Delta_\alpha(q),\quad q\in\mathcal{Q}^{(R,F)}_{\Lambda}.
	\]
	Here, $\Delta_\alpha(q)$ denotes the element $\tfrac{q-s_\alpha(q)}{x_\alpha}\in \mathcal{Q}^{(R,F)}_{\Lambda}$. By \cref{lem:divisible}, if $q\in R\llbracket\Lambda\rrbracket_F$, then $\Delta_\alpha(q)\in R\llbracket\Lambda\rrbracket_F$. Hence, the formula above gives a relation in $\mathbf{D}_{F}$.
\end{lem}

\begin{lem}{\label{lem:square}}{\textup{(see} \cite[Eq. (6.1)]{HMSZ}\textup{)}} For any $\alpha\in \Sigma$ we have 
	\[
	X_\alpha^2=\kappa_{\alpha} X_\alpha,\text{ where }
	\kappa_{\alpha}=\tfrac{1}{x_{\alpha}}+s_\alpha\left(\tfrac{1}{x_{\alpha}}\right).
	\]
	Here, $\kappa_{\alpha}\in R\llbracket\Lambda\rrbracket_F$ by the argument in \cite[Def. 3.12]{CPZ}.
\end{lem}

\begin{rem}
	If $(R,F)$ is a formal group law of additive type, then the $\kappa_{\alpha}$ of \cref{lem:square} is $0$ for each $\alpha\in\Sigma$.
\end{rem}

The following Proposition says that all formal affine Demazure algebras over ample rings containing $\mathcal{R}$ are isomorphic.  Its proof is the same as the proof of \cite[Thm. 7.4]{HMSZ}. We include the proof.

\begin{prop}{\label{prop:iso-of-formal-affine-Demazure-algebras}}{\textup{(see} \cite[Thm. 7.4]{HMSZ}\textup{)}}
	 Assume $R$ is an ample ring with respect to a formal group law $(\mathbb{C},F)$, and that $R$ contains $\mathcal{R}$. There is an $R$-algebra isomorphism $\mathbf{D}_{R\llbracket\Lambda\rrbracket_F}\simeq \mathbf{D}_{R\llbracket\Lambda\rrbracket_{F_a}}$.
\end{prop}
\begin{proof}
	The $W$-equivariant continuous ring isomorphism $\widetilde{\mathrm{exp}}_F^*:R\llbracket\Lambda\rrbracket_F\to  R\llbracket\Lambda\rrbracket_{F_a}$ of formal group rings sends $x_\alpha\mapsto \mathrm{exp}_F(x_\alpha)=x_\alpha g(x_\alpha)$, where $g(u)\in R\llbracket u\rrbracket$ is the invertible power series defined in the proof of \cref{lem:mixed-divisible}. Thus, $\widetilde{\mathrm{exp}}_F^*$ induces an isomorphism of twisted formal group rings $\widetilde{\mathrm{exp}}_F^*:\mathcal{Q}^{(R,F) }_{\Lambda}\to \mathcal{Q}^{(R,F_a)}_{\Lambda}$, and $\mathbf{D}_{R\llbracket\Lambda\rrbracket_{F}}$ is isomorphic to its image $D':=\mathrm{exp}_F^*(\mathbf{D}_{R\llbracket\Lambda\rrbracket_{F_a}})$ under this map. Now, $D'$ is generated over $R\llbracket\Lambda\rrbracket_{F_a}$ by the elements 
	\[\mathrm{exp}_F^*\left(\Delta_i^{(R,F)}\right)=\tfrac{1}{\mathrm{exp}_F(x_{\alpha_i})}(1-\delta_i)=\tfrac{x_{\alpha_i}}{\mathrm{exp}_F(x_{\alpha_i})}\Delta_{i}^{(R,F_a)},\]
	where $\alpha_i\in \Delta$.
	Since $\tfrac{\mathrm{exp}_F(x_{\alpha_i})}{x_{\alpha_i}}\in R\llbracket\Lambda\rrbracket_{F_a}$ has constant term $1$, it is invertible in $R\llbracket\Lambda\rrbracket_{F_a}$. Therefore, $D'$ is generated over $R\llbracket\Lambda\rrbracket_{F_a}$ by the elements $\Delta_{i}^{(R,F_a)}$, $\alpha_i\in\Delta$, and is, thus, isomorphic to $\mathbf{D}_{R\llbracket\Lambda\rrbracket_{F_a}}$.
\end{proof}

\section{Presentation in terms of generators and relations}\label{section:presentation}

In this section, we prove a presentation for the formal affine Demazure algebra in terms of formal Demazure elements. This section closely follows \cite[\S 5.2]{CPZ}, \cite[\S 6 and \S 7]{CZZ1}, and \cite[\S 6]{HMSZ}.

The arguments used from here up to and including the proof of \cref{lem:invariance} closely follow the arguments used in \cite[\S 5.2]{CPZ}. Recall that we can view $S^*_\mathbb{C}(\Lambda)$ as the symmetric algebra of the complex vector space $V$ generated by the roots $\alpha\in\Sigma.$ Let $w_0$ be the longest element of $W$, and let $N=l(w_0).$ By \cite[Ch. IV, Prop. 1.6 and Prop. 1.7]{H82} (see also the proof of \cite[Ch. IV, Thm. 1.10]{H82}), the element $d_0:=\prod\limits_{\beta\in \Sigma^+}\beta\in S^N_R(\Lambda)$ satisfies $\Delta_{I_{w_0}}^{\mathbb{C}}(d_0)=|W|$ for any reduced sequence $I_{w_0}$ of $w_0$. Since each root $\beta$ is an $\mathcal{R}$-linear combination of the simple roots, it follows from \cref{lem:symmetric-sum-independent-of-R} that $\Delta_{I_{w_0}}^{R}(d_0)=|W|$ as well.

Through the isomorphism $\psi\colon S_R^*(\Lambda)\to \mathcal{G}r_{(R,F)}^*(\Lambda)$ of \cref{lem:additive-iso}, we choose an element $u_0\in \mathcal{I}_{F}^{N}/\mathcal{I}_{F}^{N+1}$ with $u_0=\psi(d)$. Therefore, by \cref{prop:additive-reduction}, for any reduced sequence $I_{w_0}$ of $w_0$, we have that $\Delta_{I_{w_0}}^{(R,F)}(u_0)=\Delta_{I_{w_0}}^R(d)+\mathcal{I}_{F}$. Hence, $\epsilon\Delta_{I_{w_0}}^{(R,F)}(u_0)=\Delta_{I_{w_0}}^R(d)=|W|$ for any reduced sequence $I_{w_0}$ of $w_0$.

The proof of \cref{lem:invariance} is the same as the proof of \cite[Lemma 5.3 (3)]{CPZ}. We include the proof of \cref{lem:invariance} because it refers to several facts in \cite{CPZ} that we have shown hold for all real finite reflection groups as well. The element $u_0$ satisfies the following property:
\begin{lem}{\textup{(cf.} \cite[Lemma 5.3 (3)]{CPZ}\textup{)}}{\label{lem:invariance}}
	If $I$ is any sequence of simple roots, and $1\leq l(I)\leq N$, then 
	\[\epsilon\Delta_{I}(u_0)=\begin{cases}
	|W|,\quad \text{\textup{if $I$ is reduced and $l(I)=N$}};\\
	0,\quad\quad\textup{otherwise.}
	\end{cases}\]
\end{lem}
\begin{proof}
For $k>0$, \cref{lem:ideal-grading-reduction} implies that the operator $\Delta_{I}$ sends $\mathcal{I}_{F}^{k}$ to  $\mathcal{I}_{F}^{k-l(I)}$. 
Thus, if $l(I)<N$, then $\epsilon\Delta_{I}(u_0)\in \epsilon\left(\mathcal{I}_{F}^{N-l(I)}\right)=\{0\}$.

If $l(I)= N$ and $I$ is not reduced, then  $\epsilon\Delta_{I}(u_0)=0$. To see this, it suffices to show that, in this case, $\Delta_{I}^{F}$ sends $\mathcal{I}_{F}^{k}$ to  $\mathcal{I}_{F}^{k-l(I)+1}$. Thus, it is enough to show that $\mathcal{G}r\Delta_I=0$, which, by \cref{prop:additive-reduction}, is equivalent to the statement $\Delta_I^R=0$. The fact that $\Delta_I^R=0$ when $I$ is not reduced follows from \cref{prop:product-trivial2}.

If $l(I)=N$ and $I$ is reduced, then $\epsilon\Delta_{I}(u_0)=|W|$ by definition.
\end{proof}

In \cite{CPZ, CZZ1}, the torsion index $\mathfrak{t}$ of \cite{D73} serves the same role as the element $|W|$ does in this paper. For each $w\in W$, let $I_w$ be a reduced sequence of $w$. With the torsion index $\mathfrak{t}$ replaced by the element $|W|$, the following result goes through:
\begin{lem}\label{lem:matrix-invertible}{\textup{(see} \cite[Lem. 6.1]{CZZ1}\textup{)}}
	 Suppose $|W|$ is invertible in $R$. Then the matrix $\left(\Delta_{I_v}\Delta_{I_w}(u_0)\right)_{(v,w)\in W\times W}$ with coefficients in $R\llbracket\Lambda\rrbracket_{F}$ is invertible.
\end{lem}
\begin{proof}
	By hypothesis, the element $|W|$ is invertible in $R$. In addition, \cref{lem:invariance} holds.
	Thus, this result follows by the proof of \cite[Prop. 6.6]{CPZ}.
\end{proof}

The arguments used from here up to and including the proof of \cref{thm:basis-theorem} closely follow the arguments used in \cite[\S6 and \S7]{CZZ1} and \cite[\S 6]{HMSZ}. Let $\phi:\mathcal{Q}_{\Lambda,W}^{(R,F)}\to \textrm{End}_R(\mathcal{Q}_{\Lambda}^{(R,F)})$ be the $R$-algebra homomorphism induced by the left action of $\mathcal{Q}_{\Lambda,W}^{(R,F)}$ on $\mathcal{Q}_{\Lambda}^{(R,F)}$. By definition, $R\llbracket\Lambda\rrbracket_F$ acts on the left on both algebras, $\phi$ is $R\llbracket\Lambda\rrbracket_{F}$-linear, and $\phi(X_\alpha)=\Delta_\alpha$. Let $\phi_{\mathbf{D}_{F}}$ be the restriction of $\phi$ to $\mathbf{D}_{F}$. Note that the image of $\phi_{\mathbf{D}_F}$ is $\mathcal{D}_{(R,F)}(\Lambda)$, since the $R$-algebra generators of $\mathbf{D}_{F}$ map to the $R$-algebra generators of $\mathcal{D}_{(R,F)}(\Lambda)$.

\begin{thm}{\label{thm:iso-with-operators}}{\textup{(see} \cite[Thm. 7.10]{CZZ1}\textup{)}}
	The map $\phi_{\textbf{D}_{F}}:\mathbf{D}_{F}\to\mathcal{D}_{(R,F)}(\Lambda)$ is a left $R\llbracket \Lambda\rrbracket_F$-algebra isomorphism.
\end{thm}
\begin{proof}
Since \cref{cor:base-change}, and \cref{lem:matrix-invertible} hold, this proof is the same as the proof of \cite[Thm. 7.10]{CZZ1}.
\end{proof}

\begin{cor}{\label{cor:basis-for-Demazure-elements}}
	The $R$-algebra $\mathbf{D}_{F}$ is free as a left $R\llbracket\Lambda\rrbracket_{F}$-submodule of $\mathcal{Q}^{(R,F)}_{\Lambda,W}$ with basis $\{X_{I_w}\}_{w\in W}$.
\end{cor}
\begin{proof}
	This follows from \cref{thm:operator-basis} and \cref{thm:iso-with-operators}.
\end{proof}

\begin{lem}{\label{lem:relation-in-S}}{\textup{(see} \cite[Lem. 7.1]{CZZ1}\textup{)}}
	Let $I$ and $I'$ be reduced sequences of the same element $w\in W$. Then in the formal affine Demazure algebra $\mathbf{D}_{F},$ we have
	\[X_I-X_{I'}=\sum\limits_{v< w}c_v X_{I_v}\quad \text{for some $c_v\in R\llbracket\Lambda\rrbracket_{F}$,}\]
	where the ordering $v<w$ is with respect to the Bruhat order on $W$.	
\end{lem}
\begin{proof}
	By \cref{cor:basis-for-Demazure-elements}, the difference $X_I-X_{I'}=\sum_{v\in W}c_vX_{I_v}$ for some $c_v\in R\llbracket\Lambda\rrbracket_F$. Since \cref{lem:injective}, \cref{lem:coeff_in_Q}, and \cref{cor:base-change} hold, the rest of this proof is the same as the proof of \cite[Lem. 7.1]{CZZ1}.
\end{proof}

\begin{rem}\label{rem:equality-in-additive-demazure-algebra}
	Let $I$ and $I'$ be reduced sequences of the same element $u\in W$. Then \cref{prop:product-trivial} and \cref{thm:iso-with-operators} imply that, in $\mathbf{D}_{R\llbracket\Lambda\rrbracket_{F_a}}$, we have $X_I^{(R,F_a)}=X_{I'}^{(R,F_a)}$.
\end{rem} 

\begin{rem}\label{rem:equality-in-multiplicative-demazure-algebra}
	Let $I$ and $I'$ be reduced sequences of the same element $u\in W$. Suppose $\Sigma$ is crystallographic. By \cite[Thm. 3.10]{CPZ}, in $\mathbf{D}_{R\llbracket\Lambda\rrbracket_{F_m}}$, we have $X_I^{(R,F_m)}=X_{I'}^{(R,F_m)}$. Note that \cite[Thm. 3.10]{CPZ} refers to the proof of \cite[Thm. 2, pp. 86]{D74}, which relies on the geometry of flag varieties. If $\Sigma$ is noncrystallographic, we leave it open to determine whether $X_I^{(R,F_m)}=X_{I'}^{(R,F_m)}$ in $\mathbf{D}_{R\llbracket\Lambda\rrbracket_{F_m}}$.
\end{rem}

\begin{rem}{\label{rem:unique-reduced-subword}}{\textup{(cf. }\cite[Ex. 7.3]{CZZ1}\textup{)}}
	Suppose $\alpha_i$ and $\alpha_j$ are simple roots, and $m_{i,j}$ is the order of $s_is_j$ in $W$. Set $w_0^{i,j}=\underbrace{s_is_js_i\dotsb}_{\text{$m_{i,j}$ times}}=\underbrace{s_js_is_j\dotsb}_{\text{$m_{i,j}$ times}}$. If $\Sigma$ is a root system of rank $2$, then $w_0^{1,2}$ is the unique element in $W$ with more than one reduced decomposition; $w_0^{1,2}$ has exactly two reduced sequences $(\alpha_1,\alpha_2,\alpha_1,\dotsc)$ and $(\alpha_2,\alpha_1,\alpha_2,\dotsc)$, both of length $m_{1,2}$; and $w_0^{1,2}$ is the longest word in $W$. 
\end{rem}

\begin{thm}{\textup{(see} \cite[Thm. 7.9]{CZZ1} \textup{and} \cite[Thm. 6.14]{HMSZ}\textup{)}}\label{thm:basis-theorem}
	Let $\Sigma$ be a root system of a real finite reflection group $W$. Let $\mathcal{Q}^{(R,F)}_\Lambda$ denote the localization of $R\llbracket\Lambda\rrbracket_{F}$ at the elements $x_\alpha,$ $\alpha\in\Sigma.$ Given a set of simple roots $\{\alpha_1,\dotsc,\alpha_n\}$ with associated simple reflections $\{s_1,\dotsc,s_n\}$, let $m_{i,j}$ denote the order of the product $s_{i}s_j$ in $W$. 	
	The elements $q\in\mathcal{Q}^{(R,F)}_\Lambda$ (resp. $q\in R\llbracket\Lambda\rrbracket_{F}$) and the formal Demazure elements $X_i$ satisfy the following relations for all $i,j=1,\dotsc,n$:
	\begin{enumerate}
		\item\label{itm:main-1} $X_iq=\Delta_i(q)+s_i(q)X_i;$
		\item\label{itm:main-2} $X_i^2=\kappa_{\alpha_i}X_i$, where
		$\kappa_{\alpha_i}=\tfrac{1}{x_{\alpha_i}}+s_i\left(\tfrac{1}{x_{\alpha_i}}\right);$
		\item\label{itm:main-3} $\underbrace{X_iX_jX_i\dotsb}_{\text{$m_{i,j}$ times}}-\underbrace{X_jX_iX_j\dotsb}_\text{{$m_{i,j}$ times}}=\sum\limits_{w<w_0^{i,j}}\eta_w^{i,j}X_{I_w}$, $\quad $$\eta_w^{i,j}\in R\llbracket\Lambda\rrbracket_{F}$.
	\end{enumerate}
	Here, $w_0^{i,j}$ is defined in \cref{rem:unique-reduced-subword}, and the ordering $w<w_0^{i,j}$ is with respect to the Bruhat order on $W$.
	These relations, together with the ring law in $R\llbracket\Lambda\rrbracket_{F}$ and the fact that the $X_i$ are $R$-linear, form a complete set of relations in the localized twisted formal group ring $Q^{(R,F)}_{\Lambda,W}$ (resp. the formal affine Demazure algebra $\mathbf{D}_{F}$). 
\end{thm}
\begin{proof}
	The three relations hold by \cref{lem:one}, \cref{lem:square}, and \cref{lem:relation-in-S}. Since \cref{cor:base-change} and \cref{cor:basis-for-Demazure-elements} hold and $W$ is a Coxeter group (see \cref{thm:generation-by-simple-reflections}), the proofs of the presentations of $\mathcal{Q}_{\Lambda,W}^{(R,F)}$ and $\mathbf{D}_F$ are similar to the proof of \cite[Thm. 6.14]{HMSZ}.\qedhere
\end{proof}

\details{
Let $(\mathbb{C},F)$ be a formal group law over $\mathbb{C}$.
We now give alternate proofs of \cref{cor:basis-for-Demazure-elements} and \cref{lem:relation-in-S} when $R$ is an ample ring with respect to $(\mathbb{C},F)$, containing $\mathcal{R}$, and $\Sigma$ is a root system of rank $2$. Fix a reduced sequence $I_w$ for each $w\in W$. Since $\Sigma$ has rank $2$, if $w$ is not the longest word in $W$, then $I_w$ is the unique reduced sequence for $w$.

\begin{lem}{\label{lem:relation-without-inverting}}
	Assume $R$ is an ample ring with respect to $(\mathbb{C},F)$, containing $\mathcal{R}$, and $\Sigma$ is a root system of rank $2$. In $\mathbf{D}_{F}$, we can write
	\[\underbrace{X_1X_2X_1\dotsb}_{\text{$m_{1,2}$ times}}-\underbrace{X_2X_1X_2\dotsb}_\text{{$m_{1,2}$ times}}=\sum\limits_{w<w_0^{1,2}}\eta_w^{1,2}X_{I_w}, \quad \eta_w^{1,2}\in R\llbracket\Lambda\rrbracket_{F},\]
	where $m_{1,2}$ is the order of $s_1s_2$ in $W$.
\end{lem}
\begin{proof}
	\cref{prop:iso-of-formal-affine-Demazure-algebras} and \cref{rem:equality-in-additive-demazure-algebra} imply that 
	\[\underbrace{(\mu_{1}X_{1})(\mu_2 X_2)(\mu_{1}X_{1})\dotsb}_{\text{$m_{1,2}$ times}} =\underbrace{(\mu_2 X_2)(\mu_{1}X_{1})(\mu_2 X_2)\dotsb}_{\text{$m_{1,2}$ times}}\]
	in $\mathbf{D}_{F}$, where the $\mu_{1}$, $\mu_{2}\in R\llbracket\Lambda\rrbracket_F$ are defined in the proof of \cref{prop:iso-of-formal-affine-Demazure-algebras}. By \cref{lem:one}, \cref{lem:square}, and \cref{rem:unique-reduced-subword}, we can write 
	\begin{align*}\underbrace{(\mu_{1}s_1(\mu_2)s_{1}s_2(\mu_1)\dotsb)}_{\text{$m_{1,2}$ factors}}& \underbrace{X_{1}X_2X_1\dotsb}_{\text{$m_{2,1}$ times}}-\underbrace{(\mu_{1}s_2(\mu_1)s_{2}s_1(\mu_2)\dotsb)}_{\text{$m_{1,2}$ factors}} \underbrace{X_{2}X_1X_2\dotsb}_{\text{$m_{1,2}$ times}}=\sum\limits_{w<w_0^{i,j}}\Omega_w^{i,j}X_{I_w}, \end{align*}
	where the $\Omega_w^{1,2}\in R\llbracket\Lambda\rrbracket_F$. By \cref{prop:positive-roots}, \[\Theta_{1,2}:=\underbrace{(\mu_{1}s_1(\mu_2)s_{1}s_2(\mu_1)\dotsb)}_{\text{$m_{1,2}$ factors}}=\underbrace{(\mu_{2}s_2(\mu_1)s_{2}s_1(\mu_2)\dotsb)}_{\text{$m_{1,2}$ factors}}.\]
	Since $\mu_1$ and $\mu_2$ are invertible in $R\llbracket\Lambda\rrbracket_F$, so is $\Theta_{1,2}$. Now the result follows, with $\eta_w^{1,2}=\tfrac{\Omega_w^{1,2}}{\Theta_{1,2}}\in R\llbracket\Lambda\rrbracket_F$.
\end{proof}
\begin{cor}{\label{cor:relation-in-S}}{\textup{(see} \cite[Lem. 7.1]{CZZ1}\textup{)}}
	 Assume $R$ is an ample ring with respect to $(\mathbb{C},F)$, containing $\mathcal{R}$, and $\Sigma$ is a root system of rank $2$. Let $I$ and $I'$ be reduced sequences of the same element $w\in W$. Then, in the formal affine Demazure algebra $\mathbf{D}_{F}$, we have
	\[X_I-X_{I'}=\sum\limits_{v< w}c_v X_{I_v}\quad \text{for some $c_v\in R\llbracket\Lambda\rrbracket_{F}$,}\]
	where the ordering $v<w$ is with respect to the Bruhat order on $W$.	
\end{cor}
\begin{proof}
By \cref{rem:unique-reduced-subword}, $w_0^{i,j}$ has exactly two reduced sequences, and it is the only element in $W$ with more than one reduced sequence. Thus, this result follows from \cref{lem:relation-without-inverting}.
\end{proof}
\begin{lem}{\label{lem:subalgebra-relation}}
	 Assume $R$ is an ample ring with respect to $(\mathbb{C},F)$, containing $\mathcal{R}$, and $\Sigma$ is a root system of rank $2$. In the formal affine Demazure algebra $\mathbf{D}_{F}$, given any sequence $I$ of simple roots, we are able to write $X_I=\sum_{v\in W}a_{I,I_v}X_{I_v}$ for some $a_{I,I_v}\in R\llbracket\Lambda\rrbracket_{F}$ such that:
	\begin{enumerate}
		\item \label{itm:1.2}If $I$ is a reduced sequence of $w\in W$, then $a_{I,I_v}=0$ unless $v\leq w,$ and $a_{I,I_w}=1$.
		\item \label{itm:2.2}If $I$ is not reduced, then $a_{I,I_v}=0$ for all $v$ such that $l(v)\geq l(I).$
	\end{enumerate}
	Moreover, this decomposition is unique.
\end{lem}
\begin{proof}Since \cref{lem:injective}, \cref{lem:one}, \cref{lem:square}, and \cref{cor:relation-in-S} hold, this proof is the same as the proof of \cite[Lem. 7.6]{CZZ1}; however the proof of \cite[Lem. 7.6]{CZZ1} uses the reference \cite[Ch. IV, \S 1, no 5, Prop. 4]{B68}, which is a property of the Weyl group. This property holds for all real finite reflection groups, so we provide an updated reference, \cref{prop:exchange}.
\end{proof}

\begin{prop}{\label{prop:submodule-basis}}{\textup{(see} \cite[Prop. 7.7 and Rem. 7.8]{CZZ1}\textup{)}}
	 Assume $R$ is an ample ring with respect to $(\mathbb{C},F)$, containing $\mathcal{R}$, and $\Sigma$ is a root system of rank $2$. The $R$-algebra $\mathbf{D}_{F}$ is free as a left $R\llbracket\Lambda\rrbracket_{F}$-submodule of $\mathcal{Q}^{(R,F)}_{\Lambda,W}$ with basis $\{X_{I_w}\}_{w\in W}$.
\end{prop}
\begin{proof}
	Since \cref{lem:injective}, \cref{lem:one}, and \cref{lem:subalgebra-relation} hold, this result follows by the proof of \cite[Prop. 7.7]{CZZ1}.
\end{proof}
}

\section{Computations of structure coefficients}\label{section:non-simply-laced}

In the present section, we study the braid relation (\ref{itm:main-3}) of \cref{thm:basis-theorem}. In particular, we give general formulas for several structure coefficients $\eta_w^{i,j}$ that appear in this braid relation, thus generalizing several formulas obtained in \cite[Prop. 6.8]{HMSZ} to all real finite reflection groups.

Before we proceed, we will define some notation that will be used in this section and in \cref{section:applications}.

\begin{nota} 
    For any root $\gamma\in\Sigma,$ set $y_\gamma=\tfrac{1}{x_\gamma}$.
	Let $\{\alpha,\beta\}$ be distinct simple roots, and let $m$ be the order of $s_\alpha s_\beta$ in $W$. Write
	\[w_0^{\alpha,\beta}=\underbrace{s_\alpha s_j s_\alpha \dotsb}_{\text{$m$ times}} = \underbrace{s_\beta s_\alpha s_\beta \dotsb}_{\text{$m$ times}} .\]
	As in the proof of \cref{lem:diheral-property}, we set 
	\begin{align*}\Sigma_{\alpha,\beta}^+:&=\{\alpha,s_\alpha(\beta),s_\alpha s_\beta(\alpha),\dotsc,s_{\alpha,\beta,\dotsc}^{(m-1)}(\alpha)\}=\{\beta,s_\beta(\alpha),s_\beta s_\alpha(\beta),\dotsc,s_{\beta,\alpha,\dotsc}^{(m-1)}(\beta)\},
	\end{align*}
	and we will use the notation
	\[y_{\Sigma^+_{\alpha,\beta}}=\prod\limits_{\gamma\in \Sigma^+_{\alpha,\beta}}y_\gamma.\]

	We define the following notation for products of $i$ formal Demazure elements, $\delta$'s, and simple reflections: 
	\begin{align*}
	X_{\alpha,\beta,\ldots}^{(i)}&=\underbrace{X_\alpha X_\beta X_\alpha\dotsb}_i \ ,\quad X_{\beta,\alpha,\ldots}^{(i)}=\underbrace{X_\beta X_\alpha X_\beta\dotsb}_i, \\
	\delta_{\alpha,\beta,\ldots }^{(i)}&=\underbrace{\delta_\alpha \delta_\beta \delta_\alpha,\dotsb}_i \ \quad\hspace{1mm}, \quad\hspace{1mm}  \delta_{\beta,\alpha,\ldots}^{(i)}=\underbrace{\delta_\beta \delta_\alpha \delta_\beta\dotsb}_i, \\
	s_{\alpha,\beta,\ldots }^{(i)}&=\underbrace{s_\alpha s_\beta s_\alpha\dotsb}_i \ \quad\hspace{1mm}, \quad\hspace{1mm}  s_{\beta,\alpha,\ldots}^{(i)}=\underbrace{s_\beta s_\alpha s_\beta\dotsb}_i,
	\end{align*}
	So, for example,
	\[X_{\alpha,\beta,\ldots}^{(3)}=X_\alpha X_\beta X_\alpha\quad \text{and} \quad s_{\beta,\alpha,\ldots}^{(7)}=s_\beta s_\alpha s_\beta s_\alpha s_\beta s_\alpha s_\beta.\]
	
	We define the following : 
	\[
	\omega_i=\begin{cases}
	\alpha,\quad \textit{i \textup{even}},\\
	\beta,\quad \textit{i \textup{odd}}.
	\end{cases}
	\]
	Let $i=1,\dotsc,m-2$. We define:
	\[\upsilon_{\alpha,\beta}^{(i)}=
	\prod\limits_{j=0}^{m-i-1}s_{\alpha,\beta,\ldots}^{(i)}(y_{\omega_j})
	\ , \quad \upsilon_{\beta,\alpha}^{(i)}=\prod\limits_{j=0}^{m-i-1}s_{\beta,\alpha,\ldots}^{(j)}(y_{\omega_{j+1}}).
	\]
	By \cref{prop:positive-roots}, $\upsilon_{\alpha,\beta}^{(0)}=\upsilon_{\beta,\alpha}^{(0)}=y_{\Sigma^+_{\alpha,\beta}}$.
	
	For $j>i\geq0$, we define the operators $S_{\beta,\alpha}^{(i,j)}:=\sum\limits_{k=i}^j s_{\beta,\alpha,\ldots}^{(k)}$ and $S_{\alpha,\beta}^{(i,j)}:=\sum\limits_{k=i}^j s_{\alpha,\beta,\ldots}^{(k)}$, which act on a root $\gamma\in\Sigma$ as follows:
	\[S_{\beta,\alpha}^{(i,j)}(\gamma)= s_{\beta,\alpha,\ldots}^{(i)}(\gamma)+s_{\beta,\alpha,\ldots}^{(i+1)}(\gamma)+\dotsb+s_{\beta,\alpha,\ldots}^{(j)}(\gamma),\quad\text{and}\]
	\[S_{\alpha,\beta}^{(i,j)}(\gamma)= s_{\alpha,\beta\ldots}^{(i)}(\gamma)+s_{\alpha,\beta,\ldots}^{(i+1)}(\gamma)+\dotsb+s_{\alpha,\beta\ldots}^{(j)}(\gamma).\]
\end{nota}

\

We have the following extension of \cite[Prop. 6.8]{HMSZ} to all real finite reflection groups. The proof is very similar to the proof of \cite[Prop. 6.8]{HMSZ}. We include it because we will refer to the proof several times in this section.

\begin{lem} \label{lem:linear-combo} \textup{(cf.} \cite[Prop. 6.8]{HMSZ}\textup{)}
	The difference $X_{\alpha,\beta,\ldots}^{(m)}-X_{\beta,\alpha,\ldots}^{(m)}$ can be written as a linear combination 
	\begin{equation}\label{eq:difference}
	X_{\alpha,\beta,\ldots}^{(m)}-X_{\beta,\alpha,\ldots}^{(m)}=\sum_{i=1}^{m-2} \big( \kappa_{\alpha,\beta}^{(i)} X_{\alpha,\beta,\ldots}^{(i)} - \kappa_{\beta,\alpha}^{(i)} X_{\beta,\alpha,\ldots}^{(i)} \big),
	\end{equation}
	where $\kappa_{\alpha,\beta}^{(i)}\in R\llbracket\Lambda\rrbracket_F$, $i=1,\dotsc,m-2$.
\end{lem}

\begin{proof}		
	We let $\kappa_{\alpha,\beta}^{(m-1)}$ and $\kappa_{\beta,\alpha}^{(m-1)}$ denote the coefficients of $X_{\alpha,\beta,\ldots}^{(m-1)}$ and $X_{\beta,\alpha,\ldots}^{(m-1)}$ on the right side of \cref{eq:difference}. Similarly, we let $\kappa_{\alpha,\beta}^{(m)}$ and $\kappa_{\beta,\alpha}^{(m)}$ denote the coefficients of $X_{\alpha\ldots}^{(m)}$ and $X_{\beta,\alpha,\ldots}^{(m)}$ on the right side of \cref{eq:difference}. We will show by direct computation that all four of these coefficients equal $0$. 
	
	The fact that $\kappa_{\alpha,\beta}^{(m)}=\kappa_{\beta,\alpha}^{(m)}=0$ follows by the same reasoning as in the proof of \cite[Prop. 6.8]{HMSZ}. We will include the proof because we use it in \cref{cor:linear-combo-2} and \cref{thm:linear-combo2}. Alternatively, $\kappa_{\alpha,\beta}^{(m)}=\kappa_{\beta,\alpha}^{(m)}=0$ by \cref{lem:relation-in-S}.
	
	In the expansion of the product
	\begin{align*}\label{eq:expan}
	X_{\alpha,\beta,\ldots}^{(m)}&=\underbrace{X_\alpha X_\beta X_\alpha \dotsb X_{\omega_{m+1}}}_m=\underbrace{(y_\alpha -y_\alpha \delta_\alpha)(y_\beta -y_\beta \delta_\beta)(y_\alpha -y_\alpha \delta_\alpha)\dotsb(y_{\omega_{m+1}}-y_{\omega_{m+1}}\delta_{\omega_{m+1}})}_m,
	\end{align*}
	and in the expansion of the product $X_{\beta,\alpha,\ldots}^{(m)},$
	the coefficient of $\delta_{w_0^{\alpha,\beta}}=\delta_{\alpha,\beta,\ldots}^{(m)}=\delta_{\beta,\alpha,\ldots}^{(m)}$ is
	\[\pm y_\alpha s_\alpha(y_\beta)\dotsb s_{\alpha,\beta,\ldots}^{(m-1)}(y_{\omega_{m+1}}) = \pm y_\beta s_\beta(y_\alpha)\dotsb s_{\beta,\alpha,\ldots}^{(m-1)}(y_{\omega_{m}})=\pm y_{\Sigma^+_{\alpha,\beta}},\]
	where we use `$+$' when $m$ is even and we use `$-$' when $m$ is odd.
	Hence, in the difference $X_{\alpha,\beta,\ldots}^{(m)}-X_{\beta,\alpha,\ldots}^{(m)}$, the  $\delta_{w_0^{\alpha,\beta}}$-term is 
	\[
	\pm y_{\Sigma^+_{\alpha,\beta}}  \delta_{\alpha,\beta,\ldots}^{(m)} - (\pm y_{\Sigma^+_{\alpha,\beta}}  \delta_{\beta,\alpha,\ldots}^{(m)})=0.\]
	So $\kappa_{\alpha,\beta}^{(m)}=\kappa_{\beta,\alpha}^{(m)}=0$. Next we will show that $\kappa_{\alpha,\beta}^{(m-1)}=\kappa_{\beta,\alpha}^{(m-1)}=0$.
	
	Suppose $m$ is odd. To obtain a $\delta_{\alpha,\beta,\ldots}^{(m-1)}$-term in the expansion of the product $X_{\alpha,\beta,\ldots}^{(m)}$, we must choose the $\delta$-term in each factor, except for the last $X_\alpha$, where we must choose the constant term. If we choose the constant term in an $X_\beta$ factor, we would get cancellations of $\delta_\alpha$, resulting in a word in $\delta_\alpha$ and $\delta_\beta$ of length less than $m-1.$ If we choose the constant term in the first $X_\alpha$, then the word would begin with $\delta_\beta,$ and if we choose a constant term in an $X_\alpha$ factor that is not the first or last factor, we would again get cancellations of $\delta$'s and obtain a word of length less than $m-1$. Hence, the $\delta_{\alpha,\beta,\ldots}^{(m-1)}$-term is 
	\begin{align*}
	(y_\alpha \delta_\alpha) (y_\beta \delta_\beta) \dotsb  (y_\beta \delta_\beta) y_\alpha&=y_\alpha s_\alpha(y_\beta)\dotsb s_{\alpha,\beta,\ldots}^{(m-1)}(y_\beta)\delta_{\alpha,\beta,\ldots}^{(m-1)}=y_{\Sigma^+_{\alpha,\beta}} \delta_{\alpha,\beta,\ldots}^{(m-1)}.
	\end{align*}
	By a similar argument, the $\delta_{\beta,\alpha,\ldots}^{(m-1)}$-term in $X_{\alpha,\beta,\ldots}^{(m)}$ is
	\begin{align*}
	y_\alpha (y_\beta \delta_\beta)(y_\alpha \delta_\alpha) \dotsb (y_\alpha\delta_\alpha)&= y_\alpha y_\beta s_\beta(y_\alpha)\dotsb s_{\beta,\alpha,\ldots}^{(m-2)}(y_\alpha)\delta_{\beta,\alpha,\ldots}^{(m-1)}=c\delta_{\beta,\alpha,\ldots}^{(m-1)}.
	\end{align*}
	By \cref{lem:diheral-property}, $\alpha=s_{\beta,\alpha,\ldots}^{(m-1)}(\beta)$. Thus, \cref{prop:positive-roots} implies that $c=y_{\Sigma^+_{\alpha,\beta}}$.
	Therefore, in the difference $X_{\alpha,\beta,\ldots}^{(m)}-X_{\beta,\alpha,\ldots}^{(m)},$ the coefficients of $\delta_{\alpha,\beta,\ldots}^{(m-1)}$ and $\delta_{\beta,\alpha,\ldots}^{(m-1)}$ are zero, which implies that
	$\kappa_{\alpha,\beta}^{(m-1)}=\kappa_{\beta,\alpha}^{(m-1)}=0$. 
	
	Now suppose $m$ is even. One can show by a similar argument that the $\delta_{\alpha,\beta}^{(m-1)}$-term and the $\delta_{\beta,\alpha}^{(m-1)}$-term in $X_{\alpha,\beta,\ldots}^{(m)}$ are both $-y_{\Sigma_{\alpha,\beta}^+}$. Thus, 	$\kappa_{\alpha,\beta}^{(m-1)}=\kappa_{\beta,\alpha}^{(m-1)}=0$ when $m$ is even as well.
	
	Now we consider the constant terms. By the discussion just before the statement of \cref{thm:iso-with-operators}, there is a natural action of $\mathcal{Q}_{\Lambda,W}^{(R,F)}$ on $\mathcal{Q}_{\Lambda}^{(R,F)}$, where $X_\alpha(r)=\Delta_{\alpha}(r)=0$ and $X_\beta(r)=\Delta_{\beta}(r)=0$ for all $r\in R$. Therefore, the constant term on the right side of \cref{eq:difference} is zero.
	
	Finally, it follows from \cref{lem:relation-in-S} that all coefficients on the right hand side of \cref{eq:difference} are in $R\llbracket\Lambda\rrbracket_F$.
\end{proof}

\begin{rem}
	If $\Sigma$ is crystallographic, then the structure coefficients  $\kappa_{\alpha,\beta}^{(i)}$ and $\kappa_{\beta,\alpha}^{(i)}$ can be explicitly computed from the formulas in \cite[Prop. 6.8]{HMSZ} by applying \cref{lem:one} and \cref{lem:square}. 
\end{rem}

We assume that $m\geq 3$ is odd for the rest of this section. We believe that results similar to those proven in the rest of this section hold when $m$ is even. However, for simplicity of our exposition, we will only consider the case when $m$ is odd.

We use the formulas given in the following Corollary to prove \cref{thm:linear-combo2}.
\begin{cor}\label{cor:linear-combo-2}
Assume $m\geq 3$ is odd. The $\kappa_{\beta,\alpha}^{(1)},\dotsc,\kappa_{\beta,\alpha}^{(m-2)}$ of \cref{lem:linear-combo} satisfy
\begin{align*}
X_{\alpha,\beta,\ldots}^{(m)}=& \sum\limits_{i=1}^{m-2}(-1)^{m-i}\kappa_{\omega_{i+1},\omega_{i}}^{(i)}X_{\omega_{i+1},\omega_i,\ldots}^{(i)} - y_{\Sigma^+_{\alpha,\beta}}\left(\delta_{\alpha,\beta,\ldots}^{(m)}+\sum\limits_{i=1}^{m-1}(-1)^{i}(\delta_{\alpha,\beta,\ldots}^{(m-i)}+\delta_{\beta,\alpha,\ldots}^{(m-i)})-\mathbf{1}\right);\\
X_{\beta,\alpha,\ldots}^{(m)}=&\sum\limits_{i=1}^{m-2}(-1)^{m-i}\kappa_{\omega_i,\omega_{i+1}}^{(i)}X_{\omega_i,\omega_{i+1},\ldots}^{(i)} - y_{\Sigma^+_{\alpha,\beta}}\left(\delta_{\beta,\alpha,\ldots}^{(m)}+\sum\limits_{i=1}^{m-1}(-1)^{i}(\delta_{\alpha,\beta,\ldots}^{(m-i)}+\delta_{\beta,\alpha,\ldots}^{(m-i)})-\mathbf{1}\right).
\end{align*}
\end{cor}
\begin{proof}
 \cref{lem:coeff_in_Q} and \cref{thm:subword} imply that the elements $X_{\alpha,\beta,\ldots}^{(m)}$ and $X_{\beta,\alpha,\ldots}^{(m)}$ are $\mathcal{Q}_\Lambda^{(R,F)}$-linear combinations of the elements in \[\{\mathbf{1},\delta_\alpha,\delta_\beta,\delta_{\alpha}\delta_\beta,\delta_\beta\delta_\alpha,\dotsc,\delta_{\alpha,\beta,\ldots}^{(m-1)},\delta_{\beta,\alpha,\ldots}^{(m-1)},\delta_{\alpha,\beta,\ldots}^{(m)}=\delta_{\beta,\alpha,\ldots}^{(m)}\}.\] 
	So there must exist coefficients $p_{\mathbf{1},j}$, $p_{\alpha,j}^{(i)}$, $p_{\beta,j}^{(i)}\in\mathcal{Q}_\Lambda^{(R,F)}$; $i=1,\dotsc,m$; $j=1,2$, that satisfy 
	\begin{align}\label{eqn:one}
	X_{\alpha,\beta\ldots}^{(m)}=& \sum_{i=1}^{m-2}(-1)^{m-i}\kappa_{\omega_{i+1},\omega_i}^{(i)}X_{\omega_{i+1},\omega_i,\ldots}^{(i)}
	+p_{\alpha,1}^{(m)}\delta_{\alpha,\beta,\dots}^{(m)}+p_{\alpha,1}^{(m-1)}\delta_{\alpha,\beta,\dots}^{(m-1)}\\&+p_{\beta,1}^{(m-1)}\delta_{\beta,\alpha,\dots}^{(m-1)}+\dotsb+p_{\alpha,1}^{(1)}\delta_{\alpha}+p_{\beta,1}^{(1)}\delta_{\beta}+p_{\mathbf{1},1}\mathbf{1},\nonumber
	\end{align}
	\begin{align}\label{eqn:two}
	X_{\beta,\alpha,\ldots}^{(m)}=& \sum_{i=1}^{m-2}(-1)^{m-i}\kappa_{\omega_i,\omega_{i+1}}^{(i)}X_{\omega_i,\omega_{i+1},\ldots}^{(i)} +p_{\beta,2}^{(m)}\delta_{\beta,\alpha,\dots}^{(m)}+p_{\alpha,2}^{(m-1)}\delta_{\alpha,\beta,\dots}^{(m-1)}\\&+p_{\beta,2}^{(m-1)}\delta_{\beta,\alpha,\dots}^{(m-1)}+\dotsb+p_{\alpha,2}^{(1)}\delta_{\alpha}+p_{\beta,2}^{(1)}\delta_{\beta}+p_{\mathbf{1},2}\mathbf{1}.\nonumber
	\end{align}
	By \cref{lem:linear-combo}, together with the fact that $\delta_{\alpha,\beta,\ldots}^{(m)}=\delta_{\beta,\alpha,\ldots}^{(m)}$ and the fact that the $\delta_{w}$, $w\in W$, are $\mathcal{Q}_\Lambda^{(R,F)}$-linearly independent, we find that 
	\begin{equation}\label{eq:equalities} p_{\mathbf{1},1}=p_{\mathbf{1},2}, \quad p_{\beta,1}^{(i)}=p_{\beta,2}^{(i)}, \text{ and} \quad p_{\alpha,1}^{(i)}=p_{\alpha,2}^{(i)}, \quad i=1,\dotsc,m-1.\end{equation}
	Now, in the proof of \cref{lem:linear-combo}, we showed that, for $j=1,2$, the coefficients
	
	\[ p_{\alpha,1}^{(m)}=p_{\beta,2}^{(m)}=- y_{\Sigma^+_{\alpha,\beta}}\quad\text{and} \quad p_{\beta,j}^{(m-1)}=p_{\alpha,j}^{(m-1)}= y_{\Sigma^+_{\alpha,\beta}}.\]	
	
	\noindent Furthermore, since $m$ is odd, in the expansion of the product $X_{\alpha,\beta,\ldots}^{(m)}$, the coefficient of $\delta_{\alpha,\beta,\ldots}^{(i)}$ equals the coefficient of $-\delta_{\alpha,\beta,\ldots}^{(i-1)}$ when $i$ is odd, and the coefficient of $\delta_{\beta,\alpha,\ldots}^{(i)}$ equals the coefficient of $-\delta_{\beta,\alpha,\ldots}^{(i-1)}$ when $i$ is even. This follows from the definition of the formal Demazure element, $X_\alpha=y_\alpha(\mathbf{1}-\delta_{\alpha})$, and from the fact that $\delta_\alpha^2=\delta_\beta^2=\mathbf{1}$. Switching $\alpha$ with $\beta$ gives a similar result for $X_{\beta,\alpha,\ldots}^{(m)}$. See the computations in \cref{calc:calculation} for concrete examples that display this property. 
	This tells us that
	\[ p_{\beta,1}^{(m-2)}=-p_{\beta,1}^{(m-1)}=- y_{\Sigma^+_{\alpha,\beta}}, \text{ and} \quad p_{\alpha,2}^{(m-2)}=-p_{\alpha,2}^{(m-1)}=- y_{\Sigma^+_{\alpha,\beta}}.\]	
	Combining this with \cref{eq:equalities} gives us
	\begin{align*}&p_{\alpha,1}^{(m-3)}=-p_{\alpha,1}^{(m-2)}=-p_{\alpha,2}^{(m-2)}= y_{\Sigma^+_{\alpha,\beta}}, \text{ and}
	\\&p_{\beta,2}^{(m-3)}=-p_{\beta,2}^{(m-2)}=-p_{\beta,1}^{(m-2)}= y_{\Sigma^+_{\alpha,\beta}}.
	\end{align*}
	Now we continue recursively to obtain the formulas for the coefficients that appear in the statement of this Corollary.
\end{proof}

\begin{rem}
	Assume $m\geq 3$ is odd. Suppose $(R,F)$ is the additive formal group law over $R$; or $(R,F)$ is the multiplication formal group law over $R$ and $\Sigma$ is crystallographic. In the first case, \cref{rem:equality-in-additive-demazure-algebra} implies that all the $\kappa_{\alpha,\beta}^{(i)}=0$. In the second case, \cref{rem:equality-in-multiplicative-demazure-algebra} implies that all the $\kappa_{\alpha,\beta}^{(i)}=0$. Thus, in either case, \cref{cor:linear-combo-2} implies that 
	\begin{align*}
	X_{\alpha,\beta,\dotsc}^{(m)}=X_{\beta,\alpha,\dotsc}^{(m)}= y_{\Sigma^+_{\alpha,\beta}}\left(\delta_{\alpha,\beta,\ldots}^{(m)}+\sum\limits_{i=1}^{m-1}(-1)^{i}(\delta_{\alpha,\beta,\ldots}^{(m-i)}+\delta_{\beta,\alpha,\ldots}^{(m-i)})-\mathbf{1}\right).
	\end{align*}
\end{rem}

We will now motivate \cref{lem:coefficient2} with the following example.

\begin{ex}
	Suppose $m=5$. The coefficient of $\delta_{\alpha,\beta,\ldots}^{(3)}$ in the expansion of the product $X_{\alpha,\beta,\ldots}^{(5)}$ is 
	\[c_{\alpha,\beta}^{(3)}=
	-\upsilon_{\alpha,\beta}^{(2)}S_{\alpha,\beta}^{(0,3)}(y_\alpha y_\beta).
	\] 
	To see this, note that in the expansion of 
	\[X_{\alpha,\beta\ldots}^{(5)}=(y_\alpha-y_\alpha\delta_{\alpha})(y_\beta-y_\beta\delta_{\beta})(y_\alpha-y_\alpha\delta_{\alpha})(y_\beta-y_\beta\delta_{\beta})(y_\alpha-y_\alpha\delta_{\alpha}),\]
	all $\delta_{\alpha,\beta,\ldots}^{(3)}$ summands are obtained by choosing a $\delta$-term in each factor, except for two adjacent factors. In the adjacent factors, one should instead choose the constant term. This way, one obtains $4$ summands:
	\begin{align*}
	c_{\alpha,\beta}^{(3)}\delta_{\alpha,\beta,\ldots}^{(3)}=& -(y_\alpha) (y_\beta) (y_\alpha\delta_\alpha)(y_\beta\delta_\beta)(y_\alpha\delta_\alpha) - (y_\alpha\delta_\alpha) (y_\beta) (y_\alpha)(y_\beta\delta_\beta)(y_\alpha\delta_\alpha) \\
	&- (y_\alpha\delta_\alpha) (y_\beta\delta_\beta ) (y_\alpha)(y_\beta)(y_\alpha\delta_\alpha)-  (y_\alpha\delta_\alpha) (y_\beta\delta_\beta ) (y_\alpha \delta_\alpha)(y_\beta)(y_\alpha).
	\end{align*}	
	The first summand is
	\begin{align*}
	-(y_\alpha) (y_\beta) (y_\alpha\delta_\alpha)(y_\beta\delta_\beta)(y_\alpha\delta_\alpha)&=-y_\alpha y_\beta y_\alpha s_\alpha(y_\beta)s_\alpha s_\beta(y_\alpha) \delta_{\alpha}\delta_{\beta}\delta_{\alpha}\\
	&=-y_\alpha y_\beta \upsilon_{\alpha,\beta}^{(2)} \delta_{\alpha,\beta,\ldots}^{(3)}.
	\end{align*}
	The second summand is 
	\begin{align*}
	-(y_\alpha\delta_\alpha) (y_\beta) (y_\alpha)(y_\beta\delta_\beta)(y_\alpha\delta_\alpha)&=-y_\alpha s_\alpha(y_\beta y_\alpha) \delta_\alpha (y_\beta\delta_\beta)(y_\alpha\delta_\alpha)\\
	&=-s_\alpha(y_\alpha y_\beta)(y_\alpha \delta_\alpha)(y_\beta\delta_\beta)(y_\alpha\delta_\alpha)\\
	&=-s_\alpha(y_\alpha y_\beta) y_\alpha s_\alpha(y_\beta)s_\alpha s_\beta(y_\alpha)\delta_\alpha \delta_\beta\delta_\alpha\\
	&=-s_\alpha(y_\alpha y_\beta) \upsilon_{\alpha,\beta}^{(2)}\delta_{\alpha,\beta,\ldots}^{(3)}.
	\end{align*}
	The third summand is 
	\begin{align*}
	- (y_\alpha\delta_\alpha) (y_\beta\delta_\beta ) (y_\alpha)(y_\beta)(y_\alpha\delta_\alpha)&=-y_\alpha s_\alpha(y_\beta)s_\alpha s_\beta(y_\alpha y_\beta) \delta_\alpha \delta_\beta (y_\alpha \delta_\alpha)\\
	&=-s_\alpha s_\beta(y_\alpha y_\beta) y_\alpha (s_\alpha(y_\beta) \delta_\alpha) \delta_\beta (y_\alpha \delta_\alpha)\\
	&=-s_\alpha s_\beta(y_\alpha y_\beta) y_\alpha (\delta_\alpha y_\beta) \delta_\beta (y_\alpha \delta_\alpha)\\
	&=-s_\alpha s_\beta(y_\alpha y_\beta) (y_\alpha \delta_\alpha) (y_\beta \delta_\beta) (y_\alpha \delta_\alpha)\\
	&=-s_\alpha s_\beta(y_\alpha y_\beta) y_\alpha s_\alpha(y_\beta)s_\alpha s_\beta(y_\alpha)\delta_\alpha \delta_\beta\delta_\alpha\\
	&=-s_\alpha s_\beta (y_\alpha y_\beta) \upsilon_{\alpha,\beta}^{(2)}\delta_{\alpha,\beta,\ldots}^{(3)}.
	\end{align*}
	The fourth summand is 
	\begin{align*}
	-  (y_\alpha\delta_\alpha) (y_\beta\delta_\beta ) (y_\alpha \delta_\alpha)(y_\beta)(y_\alpha)&=-y_\alpha s_\alpha(y_\beta) s_\alpha s_\beta(y_\alpha) s_\alpha s_\beta s_\alpha(y_\alpha y_\beta) \delta_\alpha \delta_\beta \delta_\alpha\\
	&=-s_\alpha s_\beta s_\alpha (y_\alpha y_\beta) \upsilon_{\alpha,\beta}^{(2)}\delta_{\alpha,\beta,\ldots}^{(3)}.
	\end{align*}
	Combining these summands, we see that
	\begin{align*}c_{\alpha,\beta}^{(3)}&=-(y_\alpha y_\beta+s_\alpha(y_\alpha y_\beta)+s_{\alpha}s_\beta(y_\alpha y_\beta)+s_\alpha s_\beta s_\alpha(y_\alpha y_\beta))\upsilon_{\alpha,\beta}^{(2)}\\
	&=-\upsilon_{\alpha,\beta}^{(2)}S_{\alpha,\beta}^{(0,3)}(y_\alpha y_\beta).
	\end{align*}
\end{ex}

\begin{lem} \label{lem:coefficient2}
	Assume $m\geq 3$ is odd. The coefficient of $\delta_{\alpha,\beta,\ldots}^{(m-2)}$ in the expansion of the product $X_{\alpha,\beta,\ldots}^{(m)}$ is
	\[c_{\alpha,\beta}^{(m-2)}=
	-\upsilon_{\alpha,\beta}^{(2)}S_{\alpha,\beta}^{(0,m-2)}(y_\alpha y_\beta).
	\] 
	By symmetry, the coefficient of $\delta_{\beta,\alpha,\ldots}^{(m-2)}$ in the expansion of $X_{\beta,\alpha,\ldots}^{(m)}$ is $c_{\beta,\alpha}^{(m-2)}$.
\end{lem}

\begin{proof}
	A $\delta_{\alpha,\beta,\ldots}^{(m-2)}$-summand in the expansion of $X_{\alpha,\beta,\ldots}^{(m)}$ is obtained by choosing $\delta$-terms in all factors except
	of two adjacent factors, $X_\alpha X_\beta$  or $X_\beta X_\alpha$, where one chooses constant terms (if one doesn't choose adjacent factors, then there is cancellation of $\delta$'s, resulting in a word of length less than $m-2$).
	So we obtain $(m-1)$ summands,
	\begin{align*}
	c_{\alpha,\beta}^{(m-2)}\delta_{\alpha,\beta,\ldots}^{(m-2)}=& -(y_\alpha) (y_\beta) (y_\alpha\delta_\alpha)(y_\beta\delta_\beta) \dotsb (y_\beta\delta_\beta)(y_\alpha\delta_\alpha) \\
	&- (y_\alpha\delta_\alpha) (y_\beta) (y_\alpha)(y_\beta\delta_\beta) \dotsb (y_\beta\delta_\beta)(y_\alpha\delta_\alpha) \\
	&- (y_\alpha\delta_\alpha) (y_\beta\delta_\beta ) (y_\alpha)(y_\beta) \dotsb (y_\beta\delta_\beta)(y_\alpha\delta_\alpha) \\
	& \vdots \\
	&-  (y_\alpha\delta_\alpha) (y_\beta\delta_\beta ) (y_\alpha \delta_\alpha)(y_\beta\delta_\beta) \dotsb (y_\beta)(y_\alpha).
	\end{align*}	
	We use the multiplication in $\mathcal{Q}_{\Lambda,W}^{(R,F)}$ to obtain a formula for $c_{\alpha,\beta}^{(m-2)}$.
	Let $i=0,\ldots,m-2$. Then the $(i+1)^{st}$ summand above is
	\begin{align*}
	&-\underbrace{(y_\alpha \delta_\alpha)(y_\beta \delta_\beta)(y_\alpha \delta_\alpha)\dotsb(y_{\omega_{i+1}} \delta_{\omega_{i+1}})}_{i}(y_{\omega_i})(y_{\omega_{i+1}})(y_{\omega_i} \delta_{\omega_i})\ldots(y_\beta \delta_\beta)(y_\alpha \delta_\alpha)	
	\\=&- y_\alpha s_\alpha(y_\beta) s_\alpha s_\beta(y_\alpha) \dotsb\\&\dotsb s_{\alpha,\beta,\ldots}^{(i-1)}(y_{\omega_{i+1}}) s_{\alpha,\beta,\ldots}^{(i)}(y_{\omega_i} y_{\omega_{i+1}}) \delta_{\alpha,\beta,\ldots}^{(i)}(y_{\omega_i} \delta_{\omega_i})\dotsb (y_\beta \delta_\beta)(y_\alpha \delta_\alpha) \\
	=& - s_{\alpha,\beta,\ldots}^{(i)}(y_{\omega_i} y_{\omega_{i+1}})\underbrace{(y_\alpha \delta_\alpha)(y_\beta \delta_\beta)(y_\alpha \delta_\alpha)\dotsb(y_\beta \delta_\beta)(y_\alpha \delta_\alpha)}_{m-2}
	\\=& - s_{\alpha,\beta,\ldots}^{(i)}(y_{\omega_i} y_{\omega_{i+1}}) y_\alpha s_\alpha(y_\beta) s_\alpha s_\beta(y_\alpha)\dotsb s_{\alpha,\beta,\ldots}^{(m-3)}(y_\alpha)\delta_{\alpha,\beta,...}^{(m-2)} 
	\\ =&- s_{\alpha,\beta,\ldots}^{(i)}(y_{\omega_i} y_{\omega_{i+1}}) \upsilon_{\alpha,\beta}^{(2)}\delta_{\alpha,\beta,\ldots}^{(m-2)}.
	\end{align*}
	Note that we can replace $s_{\alpha,\beta,\ldots}^{(i)}(y_{\omega_i} y_{\omega_{i+1}})$ by $s_{\alpha,\beta,\ldots}^{(i)}(y_\alpha y_\beta)$.	Combining these summands, we obtain the formula for the coefficient $c_{\alpha,\beta}^{(m-2)}$ that appears in the statement of the Lemma.
\end{proof}

We will now motivate \cref{lem:coefficient3} with the following example.

\begin{ex}
	Suppose $m=5$. The coefficient of $\delta_{\beta,\alpha,\ldots}^{(2)}$ in the expansion of the product $X_{\alpha,\beta,\ldots}^{(5)}$ is 
	\[
	c_{\beta,\alpha}^{(2)}=
	y_\alpha \upsilon_{\beta,\alpha}^{(3)}\{S_{\beta,\alpha}^{(0,2)}(y_\alpha y_\beta) + s_\alpha(y_\alpha y_\beta)\}.
	\] 
	To see this, note that in the expansion of 
	\[X_{\alpha,\beta,\ldots}^{(5)}=(y_\alpha-y_\alpha\delta_{\alpha})(y_\beta-y_\beta\delta_{\beta})(y_\alpha-y_\alpha\delta_{\alpha})(y_\beta-y_\beta\delta_{\beta})(y_\alpha-y_\alpha\delta_{\alpha}),\]
	all $\delta_{\beta,\alpha,\ldots}^{(2)}$ summands are obtained in one of two ways. In the first way: one chooses the constant term in the first factor, followed by the $\delta$-term in all remaining factors, except for two adjacent factors. In the two adjacent factors, one should instead choose the constant term. Here, one obtains $3$ summands. In the second way: one chooses the constant term in the second factor, and a $\delta$-term in the remaining factors. Here, the $\delta$'s in the first and third factors will cancel each other, giving a summand of $\delta_{\beta,\alpha,\ldots}^{(2)}$. So we get $4$ summands:
	\begin{align*}
	c_{\beta,\alpha}^{(2)}\delta_{\beta,\alpha,\ldots}^{{(2)}}=& (y_\alpha\delta_\alpha) (y_\beta) (y_\alpha\delta_\alpha)(y_\beta\delta_\beta)(y_\alpha\delta_\alpha) \\& +(y_\alpha) (y_\beta) (y_\alpha)(y_\beta\delta_\beta) (y_\alpha\delta_\alpha) 
	\\& +(y_\alpha) (y_\beta\delta_\beta ) (y_\alpha)(y_\beta) (y_\alpha\delta_\alpha) 
	\\&  +(y_\alpha) (y_\beta\delta_\beta) (y_\alpha \delta_\alpha)(y_\beta)(y_\alpha).
	\end{align*}		
	The first summand is 
	\begin{align*}
	(y_\alpha\delta_\alpha) (y_\beta) (y_\alpha\delta_\alpha)(y_\beta\delta_\beta)(y_\alpha\delta_\alpha)&=y_\alpha s_\alpha(y_\beta y_\alpha) \delta_{\alpha}\delta_\alpha (y_\beta\delta_\beta)(y_\alpha\delta_\alpha)\\
	&=y_\alpha s_\alpha(y_\beta y_\alpha) y_\beta s_\beta(y_\alpha)\delta_{\beta}\delta_{\alpha}\\
	&=y_\alpha \upsilon_\beta^{(3)} s_\alpha(y_\alpha y_\beta)\delta_{\beta,\alpha,\ldots}^{(2)}.
	\end{align*}
	The second summand is
	\begin{align*}
	(y_\alpha) (y_\beta) (y_\alpha)(y_\beta\delta_\beta) (y_\alpha\delta_\alpha) &=y_\alpha y_\beta y_\alpha (y_\beta s_\beta(y_\alpha))\delta_\beta \delta_\alpha\\
	&=y_\alpha \upsilon_\beta^{(3)} y_\alpha y_\beta \delta_{\beta,\alpha,\ldots}^{(2)}.
	\end{align*}
	The third summand is
	\begin{align*}
	(y_\alpha) (y_\beta\delta_\beta ) (y_\alpha)(y_\beta) (y_\alpha\delta_\alpha) &=y_\alpha y_\beta s_\beta(y_\alpha y_\beta) \delta_\beta (y_\alpha \delta_\alpha)\\
	&=y_\alpha s_\beta(y_\alpha y_\beta) (y_\beta \delta_\beta)(y_\alpha \delta_\alpha)\\
	&=y_\alpha s_\beta(y_\alpha y_\beta) y_\beta s_\beta(y_\alpha) \delta_\beta \delta_\alpha\\
	&=y_\alpha \upsilon_{\beta,\alpha}^{(3)} s_\beta(y_\alpha y_\beta) \delta_{\beta,\alpha,\ldots}^{(2)}. 
	\end{align*}
	The fourth summand is
	\begin{align*}
	(y_\alpha) (y_\beta\delta_\beta) (y_\alpha \delta_\alpha)(y_\beta)(y_\alpha)&=y_\alpha y_\beta s_\beta(y_\alpha) s_\beta s_\alpha(y_\beta y_\alpha) \delta_\beta \delta_\alpha\\
	&=y_\alpha \upsilon_{\beta,\alpha}^{(3)} s_\beta s_\alpha(y_\alpha y_\beta) \delta_{\beta,\alpha,\ldots}^{(2)}.
	\end{align*}
	Combining these summands, we see that
	\begin{align*}c_{\beta,\alpha}^{(2)}&=y_\alpha\upsilon_{\beta,\alpha}^{(3)}\{ y_\alpha y_\beta +s_\beta(y_\alpha y_\beta) + s_\beta s_\alpha(y_\alpha y_\beta) + s_\alpha(y_\alpha y_\beta)\}\\
	&=y_\alpha\upsilon_{\beta,\alpha}^{(3)}\{S_{\beta,\alpha}^{(0,2)}(y_\alpha y_\beta) + s_\alpha(y_\alpha y_\beta)\}.
	\end{align*}
\end{ex}

\begin{lem} \label{lem:coefficient3}
Assume $m\geq 5$ is odd. Then the coefficient of $\delta_{\beta,\alpha,\ldots}^{(m-3)}$ in the expansion of the product $X_{\alpha,\beta,\ldots}^{(m)}$ is 
	\[
	c_{\beta,\alpha}^{(m-3)}=
	y_\alpha \upsilon_{\beta,\alpha}^{(3)}\{S_{\beta,\alpha}^{(0,m-3)}(y_\alpha y_\beta) + s_\alpha(y_\alpha y_\beta)\}.
	\] 
	By symmetry, the coefficient of $\delta_{\alpha,\beta,\ldots}^{(m-3)}$ in the expansion of $X_{\beta,\alpha,\ldots}^{(m)}$ is $c_{\alpha,\beta}^{(m-3)}$.
\end{lem}

\begin{proof}
	A $\delta_{\beta,\alpha,\ldots}^{(m-3)}$-summand in the expansion of $X_{\alpha,\beta,\ldots}^{(m)}$ is obtained in one of two ways. In the first way, one chooses the constant term in the first $X_\alpha$, followed by $\delta$-terms in the remaining factors, except for two adjacent factors. In the two adjacent factors, one should instead choose the constant terms (if one doesn't choose adjacent factors, then there is a cancellation of $\delta$'s, resulting in a word of length less than $m-3$; if one chooses the $\delta$-term in the first factor, the product of $\delta$'s will begin with $\delta_\alpha$). This gives $m-2$ summands. In the second way, we obtain one additional summand by choosing the $\delta$-term in the first factor, followed by the constant term in the second factor, followed by the $\delta$-term in each of the remaining factors (so that the first and third $\delta_\alpha$'s will cancel). So we have $m-1$ summands in total:
	\begin{align*}
	c_{\beta,\alpha}^{(m-3)}\delta_{\beta,\alpha,\ldots}^{(m-3)}=& (y_\alpha\delta_\alpha) (y_\beta) (y_\alpha\delta_\alpha)(y_\beta\delta_\beta) \dotsb (y_\beta\delta_\beta)(y_\alpha\delta_\alpha) \\& +(y_\alpha) (y_\beta) (y_\alpha)(y_\beta\delta_\beta) \dotsb (y_\beta\delta_\beta)(y_\alpha\delta_\alpha) 
	\\& +(y_\alpha) (y_\beta\delta_\beta ) (y_\alpha)(y_\beta) \dotsb (y_\beta\delta_\beta)(y_\alpha\delta_\alpha) 
	\\ \vdots & 
	\\& + (y_\alpha) (y_\beta\delta_\beta) (y_\alpha \delta_\alpha)\dotsb (y_\alpha\delta_\alpha)(y_\beta)(y_\alpha).
	\end{align*}	
	We use the multiplication in $\mathcal{Q}_{\Lambda,W}^{(R,F)}$ to obtain a formula for $c^{(m-3)}_{\beta,\alpha}$.
	The formula for the first summand above is
	\begin{align*}
	&(y_\alpha\delta_\alpha) (y_\beta) (y_\alpha \delta_\alpha)\dotsb (y_\alpha\delta_\alpha)(y_\beta\delta_\beta)(y_\alpha\delta_\alpha)
	\\ =&  y_\alpha s_\alpha (y_\alpha y_\beta) \underbrace{(y_\beta \delta_\beta)\dotsb (y_\alpha\delta_\alpha)(y_\beta\delta_\beta)(y_\alpha\delta_\alpha)}_{m-3}
	\\=&  y_\alpha s_\alpha (y_\alpha y_\beta) y_\beta s_\beta(y_\alpha)\dotsb s_{\beta,\alpha,\ldots}^{(m-4)}(y_\alpha)\delta_{\beta,\alpha,\ldots}^{(m-3)}
	\\=&  y_\alpha s_\alpha (y_\alpha y_\beta) \upsilon_{\beta,\alpha}^{(3)}\delta_{\beta,\alpha,\ldots}^{(m-3)}.
	\end{align*}	
	Now, let $i=0,\ldots,m-3$. The $(i+2)^{nd}$ summand above is
	\begin{align*}
	&(y_\alpha) \underbrace{(y_\beta \delta_\beta) (y_\alpha \delta_\alpha)\dotsb (y_{\omega_i} \delta_{\omega_i})}_{i} (y_{\omega_{i+1}})(y_{\omega_i}) (y_{\omega_{i+1}} \delta_{\omega_{i+1}})\dotsb (y_\beta\delta_\beta)(y_\alpha\delta_\alpha)\\
	=& y_\alpha y_\beta s_\beta(y_\alpha) s_\beta s_\alpha(y_\beta) \dotsb\\&\textcolor{white}{}\dotsb s_{\beta,\alpha,\ldots}^{(i-1)}(y_{\omega_i}) s_{\beta,\alpha,\ldots}^{(i)}(y_{\omega_{i+1}} y_{\omega_i}) \delta_{\beta,\alpha,\ldots}^{(i)} (y_{\omega_{i+1}} \delta_{\omega_{i+1}})\dotsb(y_\beta \delta_\beta)(y_\alpha \delta_\alpha)\\=& y_\alpha s_{\beta,\alpha,\ldots}^{(i)}(y_{\omega_{i+1}} y_{\omega_i})\underbrace{(y_\beta \delta_\beta)(y_\alpha \delta_\alpha)(y_\beta \delta_\beta)\dotsb(y_\beta \delta_\beta)(y_\alpha \delta_\alpha)}_{m-3}
	\\ =& y_\alpha s_{\beta,\alpha,\ldots}^{(i)}(y_{\omega_{i+1}} y_{\omega_i}) y_\beta s_\beta(y_\alpha) s_\beta s_\alpha(y_\beta)\dotsb s_{\beta,\alpha,\ldots}^{(m-4)}(y_\gamma)\delta_{\beta,\alpha,\ldots}^{(m-3)}
	\\=& y_\alpha s_{\beta,\alpha,\ldots}^{(i)}(y_{\omega_{i+1}} y_{\omega_i}) \upsilon_{\beta,\alpha}^{(3)}\delta_{\beta,\alpha,\ldots}^{(m-3)}.
	\end{align*}
	Note that we can replace $s_{\beta,\alpha,\ldots}^{(i)}(y_{\omega_{i+1}} y_{\omega_i})$ by $s_{\beta,\alpha,\ldots}^{(i)}(y_\alpha y_\beta)$. Combining these summands, we obtain the general formula for the coefficient $c_{\beta,\alpha}^{(m-3)}$ that appears in the statement of the Lemma. \qedhere	
\end{proof}

\begin{thm}\label{thm:linear-combo2}
	Suppose $m\geq 3$ is odd.
	Below is an explicit formula for the structure coefficient $\kappa_{\beta,\alpha}^{(m-2)}$:
	\[
	\kappa_{\beta,\alpha}^{(m-2)}=
	S_{\beta,\alpha}^{(0,m-2)}(y_\alpha y_\beta)-y_\alpha s_{\beta,\alpha,\ldots}^{(m-2)}(y_\alpha).
	\]

	\
	
	\noindent Suppose $m\geq 5$ is odd.
	Below is an explicit formula for the structure coefficient $\kappa_{\alpha,\beta}^{(m-3)}$:
	\begin{align*}
	\kappa_{\alpha,\beta}^{(m-3)}=
	&-y_\beta\huge\{s_\alpha (y_\alpha y_\beta) +\left[S_{\alpha,\beta}^{(2,m-3)}-S_{\beta,\alpha}^{(2,m-3)}\right] (y_\alpha y_\beta) \\&- s_{\beta,\alpha,\ldots}^{(m-2)}(y_\alpha y_\beta) + y_\alpha s_{\beta,\alpha,\ldots}^{(m-2)}(y_\alpha) - s_{\alpha,\beta,\ldots}^{(m-3)}(y_\alpha) s_{\alpha,\beta,\ldots}^{(m-2)}(y_\beta)\huge\}.
	\end{align*}
\end{thm}

\begin{proof}
	By \cref{lem:diheral-property}, we have $s_{\alpha,\beta,\ldots}^{(m-1)}(\alpha)=\beta$ when $m$ is odd. We use this property implicitly in this proof.
	 
	First, we will determine the coefficient $\kappa_{\beta,\alpha}^{(m-2)}.$ Replacing $m$ with $m-2$ in \cref{lem:linear-combo} and using the same method of proof, we deduce that the coefficient of $\delta_{\alpha,\beta,\ldots}^{(m-2)}$ in the expansion of the product $X_{\alpha,\beta, \ldots}^{(m-2)}$ is 
	\[
	b_{\alpha,\beta}=
	-\upsilon_{\alpha,\beta}^{(2)}.
	\]
	In \cref{lem:coefficient2}, we found the coefficient of $\delta_{\beta,\alpha,\ldots}^{(m-2)}$ in the expansion of $X_{\alpha,\beta,\ldots}^{(m)}$, which we denoted $c_{\beta,\alpha}^{(m-2)}$. Therefore, by \cref{cor:linear-combo-2}, we have the following formula:
	
	\[
	X_{\beta,\alpha,\ldots}^{(m)}=
	\left(\kappa_{\beta,\alpha}^{(m-2)}X_{\beta,\alpha,\ldots}^{(m-2)} -\dotsb\right) + y_{\Sigma^+_{\alpha,\beta}}\left(-\delta_{\beta,\alpha,\dots}^{(m-2)}+\dotsb\right),
	\]
	where \[\kappa_{\beta,\alpha}^{(m-2)}=
	\tfrac{1}{b_{\alpha,\beta}}\left(c_{\beta,\alpha}^{(m-2)}+y_{\Sigma^+_{\alpha,\beta}}\right).\]
	One can check that, after cancellations and simplifications, this equation is the same as the equation given in the statement of this Theorem. 
	
	Now we will determine the coefficient $\kappa_{\alpha,\beta}^{(m-3)}.$ Replacing $m$ with $m-2$ in \cref{lem:linear-combo} and using the same method of proof, we deduce that the coefficient of $\delta_{\alpha,\beta,\ldots}^{(m-3)}$ in the expansion of the product $X_{\beta,\alpha,\ldots}^{(m-2)}$ is
	\[d_{\alpha,\beta}=
	y_\beta \upsilon_{\alpha,\beta}^{(3)}.
	\]
	Replacing $m$ with $m-3$ instead, we deduce that the coefficient of $\delta_{\alpha,\beta,\ldots}^{(m-3)}$ in the expansion of the product $X_{\alpha,\beta,\ldots}^{(m-3)}$ is 
	\[
	e_{\alpha,\beta}=
	\upsilon_{\alpha,\beta}^{(3)}.
	\]
	In \cref{lem:coefficient3}, we found the coefficient of $\delta_{\alpha,\beta,\ldots}^{(m-3)}$ in the expansion of $X_{\beta,\alpha,\ldots}^{(m)}$, which we denoted $c_{\alpha,\beta}^{(m-3)}$. Therefore, by  \cref{cor:linear-combo-2}, we have the following formula:
	
	\[
	X_{\beta,\alpha,\ldots}^{(m)}=
	\left(\kappa_{\beta,\alpha}^{(m-2)}X_{\beta,\alpha,\ldots}^{(m-2)} - \kappa_{\alpha,\beta}^{(m-3)}X_{\alpha,\beta,\ldots}^{(m-3)}+\dotsb\right) + y_{\Sigma^+_{\alpha,\beta}}\left(\delta_{\alpha,\beta,\dots}^{(m-3)}+\dotsb\right),\]
	where \[\kappa_{\alpha,\beta}^{(m-3)}=
	-\tfrac{1}{e_{\alpha,\beta}}\left(c_{\alpha,\beta}^{(m-3)}-\kappa_{\beta,\alpha}^{(m-2)}d_{\alpha,\beta}-y_{\Sigma^+_{\alpha,\beta}}\right).
	\]
	One can check that, after cancellations and simplifications, this equation is the same as the equation given in the statement of this Theorem.
\end{proof}
\begin{rem}
	We leave the problem of computing general formulas for the remaining structure coefficients of \cref{lem:linear-combo} open. In \cref{section:applications}, we compute all structure coefficients for the groups $I_2(5)$, $I_2(7)$, $H_3$, and $H_4$.
\end{rem}

\section{Applications}\label{section:applications}

In this section, we specialize the structure coefficients derived in \cref{section:non-simply-laced} to certain formal group laws. We also compute all structure coefficients for the dihedral groups $I_2(5)$ and $I_2(7)$, and the reflection groups $H_3$ and $H_4$.

We will use the following Lemma in \cref{ex:special-cases}.
\begin{lem}\label{lem:reflection-equality} 
	Suppose $m\geq 3$ is odd. Let $i=1,\dotsc,m-1$. If $i$ is odd, we have $s_{\alpha,\beta,\ldots}^{(i)}(\beta)=s_{\beta,\alpha,\ldots}^{(m-i-1)}(\alpha)$, and if $i$ is even, we have $s_{\alpha,\beta,\ldots}^{(i)}(\alpha)=s_{\beta,\alpha,\ldots}^{(m-i-1)}(\beta).$ 
\end{lem}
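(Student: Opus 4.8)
The plan is to recognize both sides of each identity as entries in the two enumerations of the positive system $\Sigma^+$ furnished by \cref{prop:positive-roots}. Write $\theta_{i+1}=s_{\alpha\ldots}^{(i)}(\omega_i)$ for the $\alpha$-indexed enumeration and $\theta_{j}'=s_{\beta\ldots}^{(j-1)}(\omega_{j})$ for the $\beta$-indexed one; both run over all of $\Sigma^+$. Since $m$ is odd, $\omega_i=\beta$ when $i$ is odd and $\omega_i=\alpha$ when $i$ is even, so in either parity the left-hand side of the lemma is exactly $s_{\alpha\ldots}^{(i)}(\omega_i)=\theta_{i+1}$. Likewise, because $m-i$ has the opposite parity to $i$ (as $m$ is odd), one checks $\omega_{m-i}=\alpha$ for $i$ odd and $\omega_{m-i}=\beta$ for $i$ even, so the right-hand side is $s_{\beta\ldots}^{(m-i-1)}(\omega_{m-i})=\theta_{m-i}'$. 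Thus the whole lemma is equivalent to the single assertion $\theta_{i+1}=\theta_{m-i}'$, i.e.\ that the $\beta$-enumeration of $\Sigma^+$ is the reverse of the $\alpha$-enumeration.

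Both $\theta_{i+1}$ and $\theta_{m-i}'$ are positive roots, and $\gamma$ and $-\gamma$ are the only roots sharing the reflection $s_\gamma$, exactly one of which is positive; hence it suffices to prove $s_{\theta_{i+1}}=s_{\theta_{m-i}'}$. Using $s_{w(\gamma)}=w\,s_\gamma w^{-1}$ together with the telescoping identities $s_{\alpha\ldots}^{(i)}s_{\omega_i}=s_{\alpha\ldots}^{(i+1)}$ and $s_{\beta\ldots}^{(m-i-1)}s_{\omega_{m-i}}=s_{\beta\ldots}^{(m-i)}$ (each $s_{\omega_k}$ being the next letter of the alternating word), I would rewrite
\[
s_{\theta_{i+1}}=s_{\alpha\ldots}^{(i+1)}\big(s_{\alpha\ldots}^{(i)}\big)^{-1},\qquad s_{\theta_{m-i}'}=s_{\beta\ldots}^{(m-i)}\big(s_{\beta\ldots}^{(m-i-1)}\big)^{-1}.
\]
Then I would set $r=s_\alpha s_\beta$, recording the relations $r^m=1$, $s_\alpha r s_\alpha=r^{-1}$ (so that $s_\alpha r^{j}=r^{-j}s_\alpha$), and $s_\beta=r^{-1}s_\alpha$; these hold in $I_2(m)$ for every $m$ and require no crystallographic hypothesis, so the argument is valid for the non-crystallographic dihedral groups as well.

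Finally, I would put each reflection into the normal form $r^{N}s_\alpha$ and compare exponents modulo $m$. For $i=2k$ even, $s_{\theta_{i+1}}=r^{k}s_\alpha r^{-k}=r^{2k}s_\alpha$, while writing $m-i=2l+1$ gives $s_{\theta_{m-i}'}=r^{-l}s_\beta r^{l}=r^{-2l-1}s_\alpha$; since $2l+1=m-2k$ we have $-2l-1\equiv 2k \pmod m$, so the two reflections coincide. The odd case $i=2k+1$ is entirely analogous, both expressions reducing to $r^{2k+1}s_\alpha$. The only genuine obstacle is the parity bookkeeping — correctly matching $\omega_i$ with $\omega_{m-i}$ and tracking whether each alternating word has even or odd length — and it is exactly here that the hypothesis "$m$ odd" is used, to make the exponent congruence close up; once the normal form $r^{N}s_\alpha$ is in hand, the verification is a one-line computation modulo $m$.
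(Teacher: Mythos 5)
Your proof is correct, but it follows a genuinely different route from the paper's. The paper's own argument is a two-line reduction: it quotes the classical fact that $s_{\alpha\ldots}^{(m)}(\beta)=-\alpha$ when $m$ is odd (citing Davis) and then peels simple reflections off the left of this identity one at a time---applying $s_\alpha$, then $s_\beta$, and so on, to both sides---which produces exactly the chain of identities of the lemma as the word shortens. You instead identify both sides of the lemma with entries of the two enumerations of $\Sigma^+$ from \cref{prop:positive-roots}, reduce the root identity to an identity of reflections (legitimate, since each reflection line of the root system carries only the pair $\pm\gamma$, exactly one of which is positive), and verify that identity by putting everything in the normal form $r^N s_\alpha$, $r=s_\alpha s_\beta$, and comparing exponents modulo $m$; I checked the telescoping identities $s_{\alpha\ldots}^{(i)}s_{\omega_i}=s_{\alpha\ldots}^{(i+1)}$, $s_{\beta\ldots}^{(m-i-1)}s_{\omega_{m-i}}=s_{\beta\ldots}^{(m-i)}$ and the exponent computations in both parities, and they are right. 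The trade-off: the paper's route is shorter but rests on an external citation for the action of the longest element on the simple roots; yours is self-contained within the dihedral presentation (plus the conjugation formula $s_{w(\gamma)}=w s_\gamma w^{-1}$, the same fact the paper invokes for \cref{lem:conj}), it makes the role of the hypothesis that $m$ is odd completely explicit---it is precisely the congruence $-(m-2k)\equiv 2k \pmod m$ that closes the argument---and along the way it yields the cleaner global statement that the $\beta$-enumeration of $\Sigma^+$ is the reversal of the $\alpha$-enumeration. One slip of exposition, not of substance: the dictionary $\omega_i=\alpha$ for $i$ even and $\omega_i=\beta$ for $i$ odd is a definition independent of $m$, so your opening ``Since $m$ is odd'' belongs not to the identification of the left-hand side but only to the matching of the right-hand side with $\omega_{m-i}$ (and to the final congruence).
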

\begin{proof}
	It follows from \cref{lem:diheral-property} that $s_{\alpha,\beta,\ldots}^{(m)}(\beta)=-\alpha$ when $m$ is odd. Applying compositions of reflections $s_\alpha$ and $s_\beta$ to both sides of this equation to reduce the length of $s_{\alpha,\beta,\ldots}^{(m)}$ gives the desired equations.
\end{proof}

\begin{ex}\label{ex:special-cases}
	Suppose that $m\geq 3$ is \textit{odd}.
	Let $(R,F)$ be any formal group law such that, for all $\alpha\in\Sigma$, the formula $x_\alpha+x_{-\alpha}-qx_\alpha x_{-\alpha}=0$ holds, where $q\in R$ is fixed. This includes the multiplicative formal group law over $R$, and any formal group law of additive type. Observe that, in $\mathcal{Q}_\Lambda^{(R,F)}$, the condition  $x_\alpha+x_{-\alpha}-qx_\alpha x_{-\alpha}=0$ for all $\alpha\in\Sigma$ is equivalent to the condition $y_{-\alpha}=q-y_\alpha$ for all $\alpha\in\Sigma$. We show that under this condition, the structure coefficients of \cref{thm:linear-combo2} satisfy
	\begin{align*}
	&\textup{(a)} \quad\kappa_{\alpha,\beta}^{(m-2)}=\kappa_{\beta,\alpha}^{(m-2)}, \quad \text{and}\\
	&\textup{(b)} \quad\kappa_{\alpha,\beta}^{(m-3)}=\kappa_{\beta,\alpha}^{(m-3)}=0.
	\end{align*}	
	(a) Using the formula of $\kappa_{\alpha,\beta}^{(m-2)}$ obtained in \cref{thm:linear-combo2} and making the substitution $y_{-\alpha}=(q-y_\alpha)$, we can write 
	\begin{align*}\kappa_{\beta,\alpha}^{(m-2)}&=y_\alpha y_\beta + \{[q-y_{\beta}]s_\beta(y_\alpha)+[q-s_\beta(y_\alpha)]s_\beta s_\alpha(y_\beta)\}+\dotsb
	\\&+ \{[q-s_{\beta,\alpha,\ldots}^{(m-4)}(y_\alpha)]s_{\beta,\alpha,\ldots}^{(m-3)}(y_\beta)+ [q-s_{\beta,\alpha,\ldots}^{(m-3)}(y_{\beta})]s_{\beta,\alpha,\ldots}^{(m-2)}(y_\alpha)\}\\&-y_\alpha s_{\beta,\alpha,\ldots }^{(m-2)}(y_\alpha), \quad \text{ and}
	\end{align*}
	\begin{align*}
	\kappa_{\alpha,\beta}^{(m-2)}&=y_\alpha y_\beta + \{[q-y_{\alpha}]s_\alpha(y_\beta)+[q-s_\alpha(y_\beta)]s_\alpha s_\beta(y_\alpha)\}+\dotsb
	\\&+ \{[q-s_{\alpha,\beta,\ldots}^{(m-4)}(y_\beta)]s_{\alpha,\beta,\ldots}^{(m-3)}(y_\alpha) +[q-s_{\alpha,\beta,\ldots}^{(m-3)}(y_{\alpha})]s_{\alpha,\beta,\ldots}^{(m-2)}(y_\beta)\}\\&-y_\beta s_{\alpha,\beta,\ldots }^{(m-2)}(y_\beta).
	\end{align*}
	By \cref{lem:reflection-equality}, we have the following relations:
	\begin{align}
	&y_\alpha s_\alpha(y_\beta)=y_\alpha s_{\beta,\alpha,\ldots}^{(m-2)}(y_\alpha) \quad \text{ and         } \quad  y_\beta s_\beta(y_\alpha)=y_\beta s_{\alpha,\beta,\ldots}^{(m-2)}(y_\beta), \nonumber
	\\&qs_{\alpha,\beta,\ldots}^{(i)}(y_\beta)=qs_{\beta,\alpha,\ldots}^{(m-i-1)}(y_\alpha), \quad \quad \quad\quad \quad\quad \quad \quad\quad\quad i=1,3,5,\dotsc,m-2, \label{eq:5}\\
	&qs_{\alpha,\beta,\ldots}^{(i)}(y_\alpha)=qs_{\beta,\alpha,\ldots}^{(m-i-1)}(y_\beta), \quad \quad\quad \quad\quad \quad \quad\quad \quad\quad i=2,4,\dotsc,m-3, \label{eq:6}
	\\&s_{\alpha,\beta,\ldots}^{(i)}(y_\alpha)s_{\alpha,\beta,\ldots}^{(i+1)}(y_\beta)=s_{\beta,\alpha,\ldots}^{(m-i-1)}(y_\beta)s_{\beta,\alpha,\ldots}^{(m-i-2)}(y_\alpha), \quad i=2,4,\dotsc,m-3,  \label{eq:7}
	\\&s_{\alpha,\beta,\ldots}^{(i)}(y_\beta)s_{\alpha,\beta,\ldots}^{(i+1)}(y_\alpha)=s_{\beta,\alpha,\ldots}^{(m-i-1)}(y_\alpha)s_{\beta,\alpha,\ldots}^{(m-i-2)}(y_\beta), \quad i=1,3,5,\dotsc,m-4. \label{eq:8}
	\end{align}	
	Using these relations and comparing $\kappa_{\beta,\alpha}^{(m-2)}$ and $\kappa_{\alpha,\beta}^{(m-2)}$, it is straightforward to verify that $\kappa_{\alpha,\beta}^{(m-2)}=\kappa_{\beta,\alpha}^{(m-2)}$.\\
	
	(b) Using the formula for $\kappa_{\alpha,\beta}^{(m-3)}$ obtained in \cref{thm:linear-combo2} and making the substitution $y_{-\alpha}=(q-y_\alpha)$, we can write 
	\begin{align*}
	-\tfrac{\kappa_{\alpha,\beta}^{(m-3)}}{y_\beta}=& [q-y_{\alpha}]s_\alpha(y_\beta) +\{[q-s_\alpha(y_{\beta})]s_\alpha s_\beta(y_\alpha) - [q-s_\beta (y_{\alpha})]s_\beta s_\alpha(y_\beta)\} +\dotsb\\& + \{[q-s_{\alpha,\beta,\ldots}^{(m-4)}(y_{\beta})]s_{\alpha,\beta,\ldots}^{(m-3)}(y_\alpha) - [q-s_{\beta,\alpha,\ldots}^{(m-4)}(y_{\alpha})] s_{\beta,\alpha,\ldots}^{(m-3)}(y_\beta)\} \\&- [q-s_{\beta,\alpha\ldots}^{(m-3)}(y_{\beta})]s_{\beta,\alpha,\ldots}^{(m-2)}(y_\alpha) + y_\alpha s_{\beta,\alpha,\ldots}^{(m-2)}(y_\alpha) - s_{\alpha,\beta,\ldots}^{(m-3)}(y_\alpha) s_{\alpha,\beta,\ldots}^{(m-2)}(y_\beta).
	\end{align*}
	By \cref{lem:reflection-equality}, we have the following relations:
	\begin{align*}
	&y_\alpha s_\alpha (y_\beta)=y_\alpha s_{\beta,\alpha,\ldots}^{(m-2)}(y_\alpha),\quad  \text{and}
	\\&qs_{\beta,\alpha,\ldots}^{(m-2)}(y_\alpha)=qs_\alpha(y_\beta).
	\end{align*}
	These relations, together with the relations \cref{eq:5,eq:6,eq:7,eq:8}, allow us to deduce that the terms of $\kappa_{\alpha,\beta}^{(m-3)}$ cancel in pairs. We conclude that $\kappa_{\alpha,\beta}^{(m-3)}=\kappa_{\beta,\alpha}^{(m-3)}=0$.
\end{ex}

\cref{ex:lower-order-relations} follows from \cref{lem:linear-combo} and \cref{thm:linear-combo2}:

\begin{ex}{\label{ex:lower-order-relations}}
	\begin{enumerate}[label=(\alph*)]
		\item\label{itm:order-2} Suppose $m=2$. In this case,
		\[X_\alpha X_\beta=X_\beta X_\alpha.\]
		\item \label{itm:order-3} Suppose $m=3$. In this case,
		\[X_\alpha X_\beta X_\alpha-X_\beta X_\alpha X_\beta=\kappa_{\alpha,\beta}^{(1)}X_\alpha-\kappa_{\beta,\alpha}^{(1)}X_\beta,\]
		where $\kappa_{\beta,\alpha}^{(1)}=y_{\alpha}y_{\beta}+s_\beta (y_{\alpha}y_{\beta})-y_{\alpha}s_\beta (y_{\alpha})$.
		\details{\item \label{itm:order-4} Suppose $m=4$. In this case,
		\begin{align*}
		X_\alpha X_\beta X_\alpha X_\beta-X_\beta X_\alpha X_\beta X_\alpha=&\kappa_{\alpha,\beta}^{(2)}X_\alpha X_\beta -\kappa_{\beta,\alpha}^{(2)}  X_\beta X_\alpha \\&+ \kappa_{\alpha,\beta}^{(1)}X_\alpha -\kappa_{\beta,\alpha}^{(1)}  X_\beta ,
		\end{align*}
		where \[\kappa_{\beta,\alpha}^{(2)}=y_\alpha y_\beta+s_\beta (y_\alpha y_\beta) +s_{\beta}s_\alpha(y_\alpha y_\beta)-y_\alpha s_{\beta}s_\alpha(y_\beta);\] 
		\[\kappa_{\alpha,\beta}^{(1)}=-y_\beta s_\alpha(y_\alpha y_\beta)+y_\beta s_\beta s_\alpha(y_\alpha y_\beta)-y_\alpha y_\beta s_\beta s_\alpha(y_\beta)+y_\beta s_\alpha(y_\beta)s_\alpha s_\beta(y_\alpha).\]}
	\end{enumerate}
\end{ex}

\begin{ex} \label{ex:order-5}
	Suppose $m=5$. We provide explicit formulas for $\kappa_{\beta,\alpha}^{(1)},\kappa_{\alpha,\beta}^{(2)}$, and $\kappa_{\beta,\alpha}^{(3)}$:
	\begin{align*}
	\kappa_{\beta,\alpha}^{(3)} = &S_{\beta,\alpha}^{(0,3)}(y_\alpha y_\beta) - y_\alpha s_\beta s_\alpha s_\beta (y_\alpha),	\\
	\kappa_{\alpha,\beta}^{(2)}=&-y_\beta\{s_\alpha (y_\alpha y_\beta) + s_\alpha s_\beta (y_\alpha y_\beta) -s_\beta s_\alpha (y_\alpha y_\beta)-s_\beta s_\alpha s_\beta (y_\alpha y_\beta)\\& + 
	y_\alpha s_\beta s_\alpha s_\beta (y_\alpha) -s_\alpha s_\beta (y_\alpha) s_\alpha s_\beta s_\alpha (y_\beta)\},\\
	\kappa_{\beta,\alpha}^{(1)}=&-s_\beta (y_\alpha y_\beta)s_\beta s_\alpha s_\beta(y_\alpha)\{s_\beta s_\alpha s_\beta ( y_\beta) - y_\alpha\}-y_\alpha y_\beta s_\alpha s_\beta(y_\alpha)\{s_\alpha s_\beta ( y_\beta) \\&-
	s_\alpha s_\beta s_\alpha (y_\beta)\}-y_\alpha s_\beta (y_\alpha) s_\beta s_\alpha (y_\beta) s_\beta s_\alpha s_\beta (y_\alpha).
	\end{align*}
	We use the results proven in \cref{section:non-simply-laced} to justify these formulas.

	The formulas for $\kappa_{\beta,\alpha}^{(3)}$ and $\kappa_{\alpha,\beta}^{(2)}$ are obtained from \cref{thm:linear-combo2}. Now we will find $\kappa_{\beta,\alpha}^{(1)}$. In \cref{calc:calculation}, we provide formulas of products of up to seven formal Demazure elements. We use these formulas, together with the formulas for the coefficients $\kappa_{\beta,\alpha}^{(3)}$ and  $\kappa_{\alpha,\beta}^{(2)}$, to determine $\kappa_{\beta,\alpha}^{(1)}$. We do this by subtracting $\kappa_{\beta,\alpha}^{(3)} X_\beta X_\alpha X_\beta-\kappa_{\alpha,\beta}^{(2)}X_\alpha X_\beta+y_{\Sigma^+_{\alpha,\beta}}(\mathbf{1}-\delta_{\beta})$ from $X_{\beta\ldots}^{(5)}$, and then using \cref{cor:linear-combo-2}.\\
	
	\noindent The coefficient of $(\mathbf{1}-\delta_\beta)$ in the expansion of the product $X_{\beta,\alpha,\ldots}^{(5)}$ is
	\begin{align*}
	&+y_\beta\{[S_{\beta,\alpha}^{(0,1)}(y_\alpha y_\beta)]^2 + s_\beta (y_\alpha y_\beta) s_\beta s_\alpha (y_\alpha y_\beta)\}
	\\&+y_\alpha y_\beta^2 s_\alpha (y_\alpha y_\beta).
	\end{align*} 
	The coefficient of $(\mathbf{1}-\delta_\beta)$ in the expansion of the product $\kappa_{\beta,\alpha}^{(3)} X_\beta X_\alpha X_\beta$ is 
	\begin{align*}
	&+y_\beta\{[S_{\beta,\alpha}^{(0,1)}(y_\alpha y_\beta)]^2 + s_\beta (y_\alpha y_\beta) s_\beta s_\alpha (y_\alpha y_\beta)\}
	\\&+y_\alpha y_\beta^2\{S_{\beta,\alpha}^{(2,3)}(y_\alpha y_\beta) - y_\alpha s_\beta s_\alpha s_\beta (y_\alpha)\}
	\\&+y_\beta s_\beta (y_\alpha y_\beta)\{s_\beta s_\alpha s_\beta (y_\alpha y_\beta) - y_\alpha s_\beta s_\alpha s_\beta (y_\alpha)\}.
	\end{align*}
	The coefficient of $(\mathbf{1}-\delta_\beta)$ in the expansion of the product $-\kappa_{\alpha,\beta}^{(2)}X_\alpha X_\beta$ is 
	\begin{align*}
	&+y_\alpha y_\beta^2 s_\alpha (y_\alpha y_\beta)
	\\&-y_\alpha y_\beta^2\{S_{\beta,\alpha}^{(2,3)}(y_\alpha y_\beta) - y_\alpha s_\beta s_\alpha s_\beta (y_\alpha)\}
	\\&+y_\alpha y_\beta^2\{s_\alpha s_\beta (y_\alpha y_\beta) - s_\alpha s_\beta (y_\alpha) s_\alpha s_\beta s_\alpha (y_\beta)\}.
	\end{align*}
	After cancellations, the coefficient of $(\mathbf{1}-\delta_\beta)$ in the term $X_{\beta\ldots}^{(5)} - \kappa_{\beta,\alpha}^{(3)} X_\beta X_\alpha X_\beta + \kappa_{\alpha,\beta}^{(2)}X_\alpha X_\beta$ is
	\begin{align*}
	C=&-y_\beta \{s_\beta (y_\alpha y_\beta)[s_\beta s_\alpha s_\beta (y_\alpha y_\beta) - y_\alpha s_\beta s_\alpha s_\beta (y_\alpha)]+y_\alpha y_\beta[s_\alpha s_\beta (y_\alpha y_\beta) \\&- s_\alpha s_\beta (y_\alpha) s_\alpha s_\beta s_\alpha (y_\beta)]\}.
	\end{align*}
	The coefficient of $(\mathbf{1}-\delta_\beta)$ in  $X_\beta$ is $y_\beta$. Therefore, by \cref{cor:linear-combo-2}, the coefficient at $X_\beta$ in the braid relation is $\kappa_\beta^1=\tfrac{C-y_{\Sigma^+_{\alpha,\beta}}}{y_\beta}$. Finally, by \cref{lem:reflection-equality}, we have that $y_\alpha=s_{\beta,\alpha,\ldots}^4(y_\beta)$, and that $y_{\Sigma^+_{\alpha,\beta}}=y_\beta s_\beta(y_\alpha) s_\beta s_\alpha(y_\beta) s_\beta s_\alpha s_\beta(y_\alpha)s_{\beta,\alpha,\ldots}^{(4)}(y_\beta)$. 
\end{ex}

\begin{ex}\label{ex:order-7}	
	Suppose $m=7$. We provide explicit formuals for $\kappa_{\beta,\alpha}^{(1)}$, $\kappa_{\alpha,\beta}^{(2)}$, $\kappa_{\beta,\alpha}^{(3)}$, $\kappa_{\alpha,\beta}^{(4)}$, and $\kappa_{\beta,\alpha}^{(5)}$:
	\begin{align*}	
	k_{\beta,\alpha}^{(5)}= &S_{\beta,\alpha}^{(0,5)}(y_\alpha y_\beta)-y_\alpha s_{\beta,\alpha,\ldots}^{(5)}(y_\alpha),\\
	k_{\alpha,\beta}^{(4)}=&-y_\beta\{s_\alpha(y_\alpha y_\beta)+[S_{\alpha,\beta}^{(2,4)}-S_{\beta,\alpha}^{(2,4)}](y_\alpha y_\beta)-s_{\beta,\alpha,\ldots}^{(5)}(y_\alpha y_\beta)+y_\alpha s_{\beta,\alpha,\ldots}^{(5)}(y_\alpha)\\&-s_{\alpha,\beta,\ldots}^{(4)}(y_\alpha) s_{\alpha,\beta,\ldots}^{(5)}(y_\beta)\},\\
	k_{\beta,\alpha}^{(3)}=&	-S_{\beta,\alpha}^{(1,3)}(y_\alpha y_\beta)[s_{\beta,\alpha,\ldots}^{(5)}(y_\alpha y_\beta) - y_\alpha s_{\beta,\alpha,\ldots}^{(5)}(y_\alpha)]-S_{\beta,\alpha}^{(1,2)}(y_\alpha y_\beta)[s_{\beta,\alpha,\ldots}^{(4)}(y_\alpha y_\beta)] \\&-s_\beta(y_\alpha y_\beta) s_\beta s_\alpha s_\beta (y_\alpha y_\beta)-y_\alpha y_\beta[S_{\alpha,\beta}^{(2,4)}(y_\alpha y_\beta) - s_{\alpha,\beta,\ldots}^{(4)}(y_\alpha) s_{\alpha,\beta,\ldots}^{(5)}(y_\beta)]\\&- y_\alpha s_\beta s_\alpha s_\beta (y_\alpha) s_{\beta,\alpha,\ldots}^{(4)}(y_\beta)s_{\beta,\alpha,\ldots}^{(5)}(y_\alpha),	\\
	k_{\alpha,\beta}^{(2)}=&-y_\beta\{-s_\alpha (y_\alpha y_\beta)[S_{\alpha,\beta}^{(3,4)}(y_\alpha y_\beta)- s_{\alpha,\beta,\ldots}^{(4)}(y_\alpha) s_{\alpha,\beta,\ldots}^{(5)}(y_\beta)] \\&- s_\alpha s_\beta (y_\alpha y_\beta)[s_{\alpha,\beta,\ldots}^{(4)}(y_\alpha y_\beta) - s_{\alpha,\beta,\ldots}^{(4)}(y_\alpha) s_{\alpha,\beta,\ldots}^{(5)}(y_\beta)]\\&+s_\beta s_\alpha (y_\alpha y_\beta)[S_{\beta,\alpha}^{(4,5)}(y_\alpha y_\beta)- y_\alpha s_{\beta,\alpha,\ldots}^{(5)}(y_\alpha)] \\&+s_\beta s_\alpha s_\beta (y_\alpha) s_{\beta,\alpha,\ldots}^{(5)}(y_\alpha)[s_\beta s_\alpha s_\beta (y_\beta) s_{\beta,\alpha,\ldots}^{(5)}(y_\beta) + y_\alpha s_{\beta,\alpha,\ldots}^{(4)}(y_\beta) - y_\alpha s_\beta s_\alpha s_\beta (y_\beta)] \\&- s_\beta (y_\alpha) s_\beta s_\alpha (y_\beta) s_\beta s_\alpha s_\beta (y_\alpha) s_{\beta,\alpha,\ldots}^{(4)}(y_\beta)\},\\
	k_{\beta,\alpha}^{(1)}=&s_\beta (y_\alpha y_\beta) s_\beta s_\alpha s_\beta (y_\alpha) s_{\beta,\alpha,\ldots}^{(5)}(y_\alpha)[s_\beta s_\alpha s_\beta (y_\beta) s_{\beta,\alpha\ldots}^{(5)}(y_\beta) + y_\alpha s_{\beta,\alpha,\ldots}^{(4)}(y_\beta) - y_\alpha s_\beta s_\alpha s_\beta (y_\beta)] \\&+
	y_\alpha y_\beta\{s_\beta(y_\alpha)s_\beta s_\alpha (y_\beta) s_\beta s_\alpha s_\beta (y_\alpha) s_{\beta,\alpha,\ldots}^{(4)}(y_\beta) +s_\alpha s_\beta (y_\alpha y_\beta ) s_{\alpha,\beta,\ldots}^{(4)}(y_\alpha y_\beta)\\&-s_\alpha s_\beta (y_\alpha y_\beta)s_{\alpha,\beta,\ldots}^{(4)}(y_\alpha) s_{\alpha,\beta,\ldots}^{(5)}(y_\beta)\}-y_\alpha s_\beta (y_\alpha) s_\beta s_\alpha (y_\beta) s_\beta s_\alpha s_\beta (y_\alpha) s_{\beta,\alpha,\ldots}^{(4)}(y_\beta) s_{\beta,\alpha,\ldots}^{(5)}(y_\alpha).
	\end{align*}
	These formulas are obtained using the method of \cref{ex:order-5}.
	
	\details{

		The formulas of $\kappa_{\beta}^5$ and $\kappa_{\alpha}^4$ follow from \cref{thm:linear-combo2}, respectively. Now we will find $\kappa_{\beta}^3$, $\kappa_{\alpha}^2$, and $\kappa_{\beta}^1$. In the \cref{calc:calculation}, we have computed the products of up to seven Demazure elements. We use these expressions, together with the coefficients $\kappa_{\beta}^5$ and $\kappa_{\alpha}^4$ that we already know, to determine the remaining coefficients by subtracting using the method of \cref{ex:order-5}.
		
		The coefficient of $X_{\beta\ldots}^7$ at $(\delta_{\beta\alpha}-\delta_{\beta\alpha\beta})$  can be written as
		
		\begin{align*}
		&+y_\alpha y_\beta^2 s_\beta (y_\alpha) s_\beta s_\alpha (y_\beta) [S_{\beta}^{(0,3)}(y_\alpha y_\beta)]\\
		&+y_\beta s_\beta (y_\alpha) s_\beta s_\alpha  (y_\alpha y_\beta^2)[S_{\beta}^{(1,3)}(y_\alpha y_\beta)]\\
		&+ y_\beta s_\beta (y_\alpha) s_\beta s_\alpha (y_\beta)s_\beta s_\alpha s_\beta (y_\alpha y_\beta)[S_{\beta}^{(1,4)}(y_\alpha y_\beta)]\\
		&+y_\beta s_\beta (y_\alpha^2 y_\beta) s_\beta s_\alpha (y_\beta) [S_\beta^{(0,2)}(y_\alpha y_\beta)]\\
		&+y_\alpha y_\beta^2 s_\beta (y_\alpha) s_\beta s_\alpha (y_\beta) s_\alpha (y_\alpha y_\beta).
		\end{align*}

		The coefficient of $\kappa_\beta^5X_{\beta\ldots}^5$ at $(\delta_{\beta\alpha}-\delta_{\beta\alpha\beta})$ can be written as
		
		\begin{align*}		
		&+y_\alpha y_\beta^2 s_\beta (y_\alpha) s_\beta s_\alpha (y_\beta) [S_{\beta}^{(0,3)}(y_\alpha y_\beta)]\\
		&+y_\beta s_\beta (y_\alpha) s_\beta s_\alpha (y_\alpha y_\beta^2)[S_{\beta}^{(1,3)}(y_\alpha y_\beta)]\\
		&+ y_\beta s_\beta (y_\alpha) s_\beta s_\alpha (y_\beta)s_\beta s_\alpha s_\beta (y_\alpha y_\beta)[S_{\beta}^{(1,4)}(y_\alpha y_\beta)]\\
		&+y_\beta s_\beta (y_\alpha^2 y_\beta) s_\beta s_\alpha (y_\beta) [S_\beta^{(0,2)}(y_\alpha y_\beta)]\\
		&+y_\alpha y_\beta^2 s_\beta (y_\alpha) s_\beta s_\alpha (y_\beta)[S_\beta^{(2,5)}(y_\alpha y_\beta) -y_\alpha s_{\beta\ldots}^5(y_\alpha)]\\
		&+y_\beta s_\beta (y_\alpha) s_\beta s_\alpha (y_\beta)\{[S_\beta^{(1,3)}(y_\alpha y_\beta)][s_{\beta\ldots}^5(y_\alpha y_\beta) - y_\alpha s_{\beta\ldots}^5(y_\alpha)]\\
		&+[S_\beta^{(1,2)}(y_\alpha y_\beta)][s_{\beta\ldots}^4(y_\beta y_\beta)] + s_\beta(y_\alpha y_\beta) s_\beta s_\alpha s_\beta (y_\alpha y_\beta)\}.	
		\end{align*}
		
		The coefficient of $-\kappa_\alpha^4X_{\alpha\ldots}^4$ at $(\delta_{\beta\alpha}-\delta_{\beta\alpha\beta})$ can be written as
		
		\begin{align*}	
		&+y_\alpha y_\beta^2 s_\beta (y_\alpha) s_\beta s_\alpha (y_\beta) s_\alpha (y_\alpha y_\beta)\\
		&-y_\alpha y_\beta^2 s_\beta (y_\alpha) s_\beta s_\alpha (y_\beta)[S_\beta^{(2,5)}(y_\alpha y_\beta) -y_\alpha s_{\beta\ldots}^5(y_\alpha)]\\
		&+ y_\alpha y_\beta^2 s_\beta (y_\alpha) s_\beta s_\alpha(y_\beta)[S_\alpha^{(2,4)}(y_\alpha y_\beta) - s_{\alpha\ldots}^4(y_\alpha) s_{\alpha\ldots}^5(y_\beta)].
		\end{align*}	
		
		After cancellations, the coefficient of $X_{\beta\ldots}^7 - \kappa_\beta^5X_{\beta\ldots}^5 + \kappa_\alpha^4X_{\alpha\ldots}^4$ at $(\delta_{\beta\alpha}-\delta_{\beta\alpha\beta})$ is
		
		\begin{align*}
		&-y_\beta s_\beta (y_\alpha) s_\beta s_\alpha (y_\beta)\{[S_\beta^{(1,3)}(y_\alpha y_\beta)][s_{\beta\ldots}^5(y_\alpha y_\beta) - y_\alpha s_{\beta\ldots}^5(y_\alpha)]\\
		&+[S_\beta^{(1,2)}(y_\alpha y_\beta)][s_{\beta\ldots}^4(y_\beta y_\beta)] + s_\beta(y_\alpha y_\beta) s_\beta s_\alpha s_\beta (y_\alpha y_\beta)+S_\alpha^{(2,4)}(y_\alpha y_\beta) \\
		&- s_{\alpha\ldots}^4(y_\alpha) s_{\alpha\ldots}^5(y_\beta)\}.
		\end{align*}
		
		The coefficient of $X_\beta X_\alpha X_\beta$ is $y_\beta s_\beta (y_\alpha) s_\beta s_\alpha (y_\beta)$. Hence, using \cref{cor:linear-combo-2} and \cref{lem:reflection-equality}, the coefficient at $X_\beta X_\alpha X_\beta$ is $\kappa_\beta^3$.
		
		\
		
		The coefficient of $X_{\alpha\ldots}^7$ at $(\delta_{\alpha\beta} - \delta_\alpha)$ can be written as
		
		\begin{align*}
		&+y_\alpha y_\beta s_\alpha (y_\beta)\{[S_\alpha^{(0,2)}(y_\alpha y_\beta) + s_\beta(y_\alpha y_\beta)][S_\beta^{(0,1)}(y_\alpha y_\beta)]+s_\beta(y_\alpha y_\beta) s_\beta s_\alpha (y_\alpha y_\beta)\}\\
		&+y_\alpha y_\beta s_\alpha (y_\beta) \{s_\alpha (y_\alpha y_\beta)[S_\alpha^{(0,2)}(y_\alpha y_\beta)] + s_\alpha s_\beta (y_\alpha y_\beta)[S_\alpha^{(1,3)}(y_\alpha y_\beta)]\}.
		\end{align*} 
		
		The coefficient of 	$\kappa_\beta^5X_{\beta\ldots}^5$ at $(\delta_{\alpha\beta} - \delta_\alpha)$ can be written as
		
		\begin{align*}	
		&+y_\alpha y_\beta s_\alpha (y_\beta)\{[S_\alpha^{(0,2)}(y_\alpha y_\beta) + s_\beta(y_\alpha y_\beta)][S_\beta^{(0,1)}(y_\alpha y_\beta)]+s_\beta(y_\alpha y_\beta) s_\beta s_\alpha (y_\alpha y_\beta)\}\\
		&+y_\alpha y_\beta s_\alpha(y_\beta)[S_\alpha^{(0,2)}(y_\alpha y_\beta)][S_\beta^{(2,5)}(y_\alpha y_\beta)- y_\alpha s_{\beta\ldots}^5(y_\alpha)]\\
		&+y_\alpha y_\beta s_\alpha (y_\beta) s_\beta (y_\alpha y_\beta)[S_\beta^{(3,5)}(y_\alpha y_\beta) - y_\alpha s_{\beta\ldots}^5(y_\alpha)].
		\end{align*}

		The coefficient of 	$-\kappa_\alpha^4X_{\alpha\ldots}^4$ at $(\delta_{\alpha\beta} - \delta_\alpha)$ can be written as
		
		\begin{align*}
		&+y_\alpha y_\beta s_\alpha (y_\beta) \{s_\alpha (y_\alpha y_\beta)[S_\alpha^{(0,2)}(y_\alpha y_\beta)] + s_\alpha s_\beta (y_\alpha y_\beta)[S_\alpha^{(1,3)}(y_\alpha y_\beta)]\}\\
		&-y_\alpha y_\beta s_\alpha(y_\beta)[S_\alpha^{(0,2)}(y_\alpha y_\beta)][S_\beta^{(2,5)}(y_\alpha y_\beta)- y_\alpha s_{\beta\ldots}^5(y_\alpha)]\\
		&+(y_\alpha y_\beta)^2 s_\alpha (y_\beta) [S_\alpha^{(2,4)}(y_\alpha y_\beta)- s_{\alpha\ldots}^4(y_\alpha) s_{\alpha\ldots}^5(y_\beta)]\\
		&+y_\alpha y_\beta s_\alpha (y_\beta)\{s_\alpha (y_\alpha y_\beta)[S_\alpha^{(3,4)}(y_\alpha y_\beta) - s_{\alpha\ldots}^4(y_\alpha) s_{\alpha\ldots}^5(y_\beta)] \\
		&+ s_\alpha s_\beta (y_\alpha y_\beta)[s_{\alpha\ldots}^4(y_\alpha y_\beta) - s_{\alpha\ldots}^4(y_\alpha) s_{\alpha\ldots}^5(y_\beta)]\}.
		\end{align*}
		
		The coefficient of 	$\kappa_\beta^3X_\beta X_\alpha X_\beta$ at $(\delta_{\alpha\beta} - \delta_\alpha)$ can be written as
		
		\begin{align*}
		&-y_\alpha y_\beta s_\alpha (y_\beta) s_\beta (y_\alpha y_\beta)[S_\beta^{(3,5)}(y_\alpha y_\beta) - y_\alpha s_{\beta\ldots}^5(y_\alpha)]\\
		&-(y_\alpha y_\beta)^2 s_\alpha (y_\beta) [S_\alpha^{(2,4)}(y_\alpha y_\beta)- s_{\alpha\ldots}^4(y_\alpha) s_{\alpha\ldots}^5(y_\beta)]\\
		&-y_\alpha y_\beta s_\alpha (y_\beta)\{s_\beta s_\alpha (y_\alpha y_\beta)[S_\beta^{(4,5)}(y_\alpha y_\beta) - y_\alpha s_{\beta\ldots}^5(y_\alpha)] \\
		&+ s_\beta s_\alpha s_\beta (y_\alpha) s_{\beta\ldots}^5(y_\alpha)[s_\beta s_\alpha s_\beta (y_\beta) s_{\beta\ldots}^5(y_\beta) + y_\alpha s_{\beta\ldots}^4(y_\beta) - y_\alpha s_\beta s_\alpha s_\beta (y_\beta)]\}.
		\end{align*}	
		
		After cancellations, the coefficient of $X_{\beta\ldots}^7 - \kappa_\beta^5X_{\beta\ldots}^5 + \kappa_\beta^4X_{\alpha\ldots}^4 - \kappa_\beta^3X_\beta X_\alpha X_\beta$ at $(\delta_{\alpha\beta} - \delta_\alpha)$ is
		
		\begin{align*}
		&-y_\alpha y_\beta s_\alpha (y_\beta)\{s_\alpha (y_\alpha y_\beta)[S_\alpha^{(3,4)}(y_\alpha y_\beta)- s_{\alpha\ldots}^4(y_\alpha) s_{\alpha\ldots}^5(y_\beta)] \\
		&+ s_\alpha s_\beta (y_\alpha y_\beta)[s_{\alpha\ldots}^4(y_\alpha y_\beta) - s_{\alpha\ldots}^4(y_\alpha) s_{\alpha\ldots}^5(y_\beta)]\\
		&-s_\beta s_\alpha (y_\alpha y_\beta)[S_\beta^{(4,5)}(y_\alpha y_\beta) - y_\alpha s_{\beta\ldots}^5(y_\alpha)]\\
		&- s_\beta s_\alpha s_\beta (y_\alpha) s_{\beta\ldots}^5(y_\alpha)[s_\beta s_\alpha s_\beta (y_\beta) s_{\beta\ldots}^5(y_\beta) + y_\alpha s_{\beta\ldots}^4(y_\beta) - y_\alpha s_\beta s_\alpha s_\beta (y_\beta)]\}.
		\end{align*}
		
		The coefficient of $X_\alpha X_\beta$ at $(\delta_{\alpha\beta} - \delta_\alpha)$ is $y_\alpha s_\alpha (y_\beta)$. Hence, using \cref{cor:linear-combo-2} and \cref{lem:reflection-equality}, the coefficient at $X_\alpha X_\beta$ is $\kappa_\alpha^2$.
		
		\
		
		The coefficient of $X_{\alpha\ldots}^7$ at $(\mathbf{1} - \delta_\beta)$ can be written as
		
		\begin{align*}
		&+ y_\beta\{[S_\beta^{(0,1)}(y_\alpha y_\beta)]^3+s_\beta (y_\alpha y_\beta) s_\beta s_\alpha (y_\alpha y_\beta)[S_\beta^{(0,1)}(y_\alpha y_\beta)+S_\beta^{(0,3)}(y_\alpha y_\beta)] \\
		&+ y_\alpha y_\beta s_\alpha (y_\alpha y_\beta)[S_\beta^{(0,1)}(y_\alpha y_\beta)]\}\\
		&+y_\alpha y_\beta^2 s_\alpha (y_\alpha y_\beta)[S_\beta^{(0,1)}(y_\alpha y_\beta) +S_\alpha^{(1,2)}(y_\alpha y_\beta)].		
		\end{align*}
		
		The coefficient of $\kappa_\beta^5X_{\beta\ldots}^5$ at $(\mathbf{1} - \delta_\beta)$ can be written as
		
		\begin{align*}
		&+ y_\beta\{[S_\beta^{(0,1)}(y_\alpha y_\beta)]^3+s_\beta (y_\alpha y_\beta) s_\beta s_\alpha (y_\alpha y_\beta)[S_\beta^{(0,1)}(y_\alpha y_\beta)+S_\beta^{(0,3)}(y_\alpha y_\beta)] \\
		&+ y_\alpha y_\beta s_\alpha (y_\alpha y_\beta)[S_\beta^{(0,1)}(y_\alpha y_\beta)]\}\\
		&+y_\alpha y_\beta^2[S_\alpha^{(0,1)}(y_\alpha y_\beta) + s_\beta (y_\alpha y_\beta)][S_\beta^{(2,5)}(y_\alpha y_\beta) - y_\alpha s_{\beta\ldots}^5(y_\alpha)]\\
		&+y_\beta s_\beta (y_\alpha y_\beta)\{[S_\beta^{(0,1)}(y_\alpha y_\beta)][S_\beta^{(3,5)}(y_\alpha y_\beta) - y_\alpha s_{\beta\ldots}^5(y_\alpha)]\\
		&+s_\beta s_\alpha (y_\alpha y_\beta)[S_\beta^{(4,5)}(y_\alpha y_\beta)- y_\alpha s_{\beta\ldots}^5(y_\alpha)]\}.
		\end{align*}

		The coefficient of $-\kappa_\alpha^4X_{\alpha\ldots}^4$ at $(\mathbf{1} - \delta_\beta)$ can be written as
		
		\begin{align*}
		&+y_\alpha y_\beta^2 s_\alpha (y_\alpha y_\beta)\{S_\beta^{(0,1)}(y_\alpha y_\beta) +S_\alpha^{(1,2)}(y_\alpha y_\beta)\}\\
		&-y_\alpha y_\beta^2[S_\alpha^{(0,1)}(y_\alpha y_\beta) + s_\beta (y_\alpha y_\beta)][S_\beta^{(2,5)}(y_\alpha y_\beta) - y_\alpha s_{\beta\ldots}^5(y_\alpha)]\\
		&+y_\alpha y_\beta^2[S_\beta^{(0,1)}(y_\alpha y_\beta)][S_\alpha^{(2,4)}(y_\alpha y_\beta) - s_{\alpha\ldots}^4(y_\alpha) s_{\alpha\ldots}^5(y_\beta)]\\
		&+y_\alpha y_\beta^2 s_\alpha (y_\alpha y_\beta)\{S_\alpha^{(3,4)}(y_\alpha y_\beta) - s_{\alpha\ldots}^4(y_\alpha) s_{\alpha\ldots}^5(y_\beta)\}.
		\end{align*}
		
		The coefficient of $\kappa_\beta^3X_\beta X_\alpha X_\beta$ at $(\mathbf{1} - \delta_\beta)$ can be written as

		\begin{align*}
		&-y_\beta s_\beta (y_\alpha y_\beta)\{[S_\beta^{(0,1)}(y_\alpha y_\beta)][S_\beta^{(3,5)}(y_\alpha y_\beta) - y_\alpha s_{\beta\ldots}^5(y_\alpha)]\\
		&+s_\beta s_\alpha (y_\alpha y_\beta)[S_\beta^{(4,5)}(y_\alpha y_\beta)- y_\alpha s_{\beta\ldots}^5(y_\alpha)]\}\\
		&-y_\alpha y_\beta^2[S_\beta^{(0,1)}(y_\alpha y_\beta)][S_\alpha^{(2,4)}(y_\alpha y_\beta) - s_{\alpha\ldots}^4(y_\alpha) s_{\alpha\ldots}^5(y_\beta)]\\
		&-y_\alpha y_\beta^2\{s_\beta s_\alpha (y_\alpha y_\beta)[S_\beta^{(4,5)}(y_\alpha y_\beta) - y_\alpha s_{\beta\ldots}^5(y_\alpha)] \\
		&+ s_\beta s_\alpha s_\beta (y_\alpha) s_{\beta\ldots}^5(y_\alpha)[s_\beta s_\alpha s_\beta (y_\beta) s_{\beta\ldots}^5(y_\beta) + y_\alpha s_{\beta\ldots}^4(y_\beta) - y_\alpha s_\beta s_\alpha s_\beta (y_\beta)]\}\\
		&-y_\beta s_\beta (y_\alpha y_\beta) s_\beta s_\alpha s_\beta (y_\alpha) s_{\beta\ldots}^5(y_\alpha)[s_\beta s_\alpha s_\beta (y_\beta) s_{\beta\ldots}^5(y_\beta) + y_\alpha s_{\beta\ldots}^4(y_\beta) - y_\alpha s_\beta s_\alpha s_\beta (y_\beta)].
		\end{align*}

		The coefficient of $-\kappa_\alpha^2X_\alpha X_\beta$ at $(\mathbf{1} - \delta_\beta)$ can be written as
		
		\begin{align*}
		&-y_\alpha y_\beta^2 s_\alpha (y_\alpha y_\beta)\{S_\alpha^{(3,4)}(y_\alpha y_\beta) - s_{\alpha\ldots}^4(y_\alpha) s_{\alpha\ldots}^5(y_\beta)\}\\
		&+y_\alpha y_\beta^2\{s_\beta s_\alpha (y_\alpha y_\beta)[S_\beta^{(4,5)}(y_\alpha y_\beta) - y_\alpha s_{\beta\ldots}^5(y_\alpha)] \\
		&+ s_\beta s_\alpha s_\beta (y_\alpha) s_{\beta\ldots}^5(y_\alpha)[s_\beta s_\alpha s_\beta (y_\beta) s_{\beta\ldots}^5(y_\beta) + y_\alpha s_{\beta\ldots}^4(y_\beta) - y_\alpha s_\beta s_\alpha s_\beta (y_\beta)]\}\\
		&-y_\alpha y_\beta^2\{s_\beta(y_\alpha)s_\beta s_\alpha (y_\beta) s_\beta s_\alpha s_\beta (y_\alpha) s_{\beta\ldots}^4(y_\beta) + s_\alpha s_\beta (y_\alpha y_\beta ) s_{\alpha\ldots}^4(y_\alpha y_\beta) \\
		&- s_\alpha s_\beta (y_\alpha y_\beta)s_{\alpha\ldots}^4(y_\alpha) s_{\alpha\ldots}^5(y_\beta)\}.	
		\end{align*}
		
		After cancellations, the coefficient of	
		$X_{\beta\ldots}^7 - \kappa_\beta^5X_{\beta\ldots}^5+\kappa_\alpha^4X_{\alpha\ldots}^4 - \kappa_\beta^3X_\beta X_\alpha X_\beta + \kappa_\alpha^2X_\alpha X_\beta$ at $(\mathbf{1} - \delta_\beta)$ is

		\begin{align*}
		&+y_\beta \{s_\beta (y_\alpha y_\beta) s_\beta s_\alpha s_\beta (y_\alpha) s_{\beta\ldots}^5(y_\alpha)[s_\beta s_\alpha s_\beta (y_\beta) s_{\beta\ldots}^5(y_\beta) + y_\alpha s_{\beta\ldots}^4(y_\beta) \\
		&- y_\alpha s_\beta s_\alpha s_\beta (y_\beta)] + y_\alpha y_\beta[s_\beta(y_\alpha)s_\beta s_\alpha (y_\beta) s_\beta s_\alpha s_\beta (y_\alpha) s_{\beta\ldots}^4(y_\beta) \\
		&+ s_\alpha s_\beta (y_\alpha y_\beta ) s_{\alpha\ldots}^4(y_\alpha y_\beta) - s_\alpha s_\beta (y_\alpha y_\beta)s_{\alpha\ldots}^4(y_\alpha) s_{\alpha\ldots}^5(y_\beta)].\}
		\end{align*}
		
		The coefficient of $X_\beta$ at $(\mathbf{1} - \delta_\beta)$ is $y_\beta$. Hence, using \cref{cor:linear-combo-2} and \cref{lem:reflection-equality}, the coefficient at $X_\beta$ is $\kappa_\beta^1$.	
	}		
\end{ex}

\begin{rem}\label{rem:zero}
	Let $(R,F)$ be a formal group law such that, for all $\alpha\in\Sigma$, $x_\alpha+x_{-\alpha}-qx_\alpha x_{-\alpha}=0$,  where $q\in R$ is fixed.
	By direct computation and using the method of \cref{ex:special-cases}, we compute $\kappa_{\alpha,\beta}^{(1)}=\kappa_{\beta,\alpha}^{(1)}$ when $m=5$. We also compute $\kappa_{\alpha,\beta}^{(3)}=\kappa_{\beta,\alpha}^{(3)}$, $\kappa_{\alpha,\beta}^{(2)}=\kappa_{\beta,\alpha}^{(2)}=0$, and $\kappa_{\alpha,\beta}^{(1)}=\kappa_{\beta,\alpha}^{(1)}$ when $m=7$.
\end{rem}

\begin{rem}
	Suppose $m\geq 3$ is \textit{odd}. Let $(R,F)$ be a formal group law such that, for all $\alpha\in\Sigma$, $x_\alpha+x_{-\alpha}-qx_\alpha x_{-\alpha}=0$,  where $q\in R$ is fixed. In light of \cref{ex:special-cases} and \cref{rem:zero}, we conjecture that $\kappa_{\alpha,\beta}^{(i)}=\kappa_{\beta,\alpha}^{(i)}$ when $i$ is odd, and $\kappa_{\alpha,\beta}^{(i)}=\kappa_{\beta,\alpha}^{(i)}=0$ when $i$ is even.
\end{rem}

\begin{rem}{\label{rem:specialized-3}}
	Let $(\Sigma,\Delta)$ be the geometric realization of $W=I_2(3)$ given in \cref{ex:I2(3)}, and let $(R,F_a)$ be the additive formal group law over $R$. In this case, $\Delta=\{\alpha,\beta\}$. Using the formulas for the roots given in \cref{ex:I2(3)} and the formula for $\kappa_{\beta,\alpha}^{(1)}$ given in \cref{ex:lower-order-relations} \ref{itm:order-3}, we compute directly that $\kappa_{\beta,\alpha}^{(1)}=0$. Thus, we have given a combinatorial proof of \cref{rem:equality-in-additive-demazure-algebra} for this geometric realization of $I_2(3)$.
\end{rem}

\details{\begin{rem}{\label{rem:specialized-4}}
	Let $(\Sigma,\Delta)$ be the geometric realization of $W=I_2(4)$ given in \cref{ex:I2(4)}, and let $(R,F_a)$ be the additive formal group law over $R$. In this case, $\Delta=\{\alpha,\beta\}$. Using the formulas for the roots given in \cref{ex:I2(4)} and the formulas for $\kappa_{\beta,\alpha}^{(1)}$ and $\kappa_{\alpha,\beta}^{(2)}$ given in \cref{ex:lower-order-relations} \ref{itm:order-3}, we compute directly that $\kappa_{\beta,\alpha}^{(1)}=\kappa_{\alpha,\beta}^{(2)}=0$. Thus, we have given a combinatorial proof of \cref{rem:equality-in-additive-demazure-algebra} for this geometric realization of $I_2(4)$.
\end{rem}}

\begin{rem}\label{rem:specialized-5}
	Let $(\Sigma,\Delta)$ be the geometric realization of $W=I_2(5)$ given in \cref{ex:I2(5)}, and let $(R,F_a)$ be the additive formal group law over $R$. In this case, $\Delta=\{\alpha,\beta\}$. We already know that $\kappa_{\alpha,\beta}^{(2)}=0$ by \cref{ex:special-cases}. Moreover, using the formulas for the roots given in \cref{ex:I2(5)}, the formulas for the structure coefficients given in \cref{ex:order-5}, and the relation $\tau^2=\tau+1$, we compute directly that $\kappa_{\beta,\alpha}^{(1)}=\kappa_{\beta,\alpha}^{(3)}=0$ as well. Thus, we have given a combinatorial proof of \cref{rem:equality-in-additive-demazure-algebra} for this geometric realization of $I_2(5)$.
\end{rem}

\begin{rem}\label{rem:specialized-7}
	Let $(\Sigma,\Delta)$ be the geometric realization of $W=I_2(7)$ given in \cref{ex:I2(7)}, and let $(R,F_a)$ be the additive formal group law over $R$. In this case, $\Delta=\{\alpha,\beta\}$. We already know that $\kappa_{\alpha,\beta}^{(4)}=0$ by \cref{ex:special-cases}, and that $\kappa_{\alpha,\beta}^{(2)}=0$ by \cref{rem:zero}. Moreover, using the formulas for the roots given in \cref{ex:I2(7)}, the formulas for the structure coefficients given in \cref{ex:order-7}, and the relation $\zeta^3=\zeta^2+2\zeta-1$, we compute directly that $\kappa_{\beta,\alpha}^{(1)}=\kappa_{\beta,\alpha}^{(3)}=\kappa_{\beta,\alpha}^{(5)}=0$ as well. Thus, we have given a combinatorial proof of \cref{rem:equality-in-additive-demazure-algebra} for this geometric realization of $I_2(7)$.
\end{rem}

We will now consider the reflection groups $H_3$ and $H_4$.

\begin{ex}\label{ex:H3}
Let $(\Sigma,\Delta)$ be the geometric realization of $W=H_3$ given in \cref{section:generalized-root-lattice}. This geometric realization has simple roots $\Delta=\{\alpha_1,\alpha_2,\alpha_3\}$, where
	\begin{align*}
	m_{1,2}=5,\quad m_{1,3}=2,\quad m_{2,3}=3
	\end{align*}
	are the orders of $s_is_j$ in $W$. The coefficient $\kappa_{\alpha_2,\alpha_3}^{(1)}$ was computed in \cref{ex:lower-order-relations} \ref{itm:order-3}. In addition, $X_1X_3=X_3X_1$ by \cref{ex:lower-order-relations} \ref{itm:order-2}. Moreover, the coefficients $\kappa_{\alpha_1,\alpha_2}^{(1)}$, $\kappa_{\alpha_1,\alpha_2}^{(2)}$, $\kappa_{\alpha_1,\alpha_2}^{(3)}$ were computed in \cref{ex:order-5}. 
	
	When $(R,F_a)$ is the additive formal group law over $R$, one can verify directly, as was done in \cref{rem:specialized-3}, that the coefficient $\kappa_{\alpha_2,\alpha_3}^{(1)}=0$. One can also verify directly, as was done in \cref{rem:specialized-5}, that the coefficients  $\kappa_{\alpha_1,\alpha_2}^{(1)}$, $\kappa_{\alpha_1,\alpha_2}^{(2)}$, and $\kappa_{\alpha_1,\alpha_2}^{(3)}$ all equal $0$ in this case.
\end{ex}
\begin{ex}\label{ex:H4}
	Let $(\Sigma,\Delta)$ be the geometric realization of $W=H_4$ given in \cref{section:generalized-root-lattice}. This geometric realization has simple roots $\Delta=\{\alpha_1,\alpha_2,\alpha_3,\alpha_4\}$, where
	\begin{align*}
	m_{1,2}=5,\quad m_{1,3}=2,\quad m_{1,4}=2,\quad m_{2,3}=3,\quad m_{2,4}=2,\quad m_{3,4}=3
	\end{align*}
	are the orders of $s_is_j$ in $W$.
	The coefficients $\kappa_{\alpha_2,\alpha_3}^{(1)}$ and $\kappa_{\alpha_3,\alpha_4}^{(1)}$ were computed in \cref{ex:lower-order-relations} \ref{itm:order-3}. In addition, $X_1X_3=X_3X_1$, $X_1X_4=X_4X_1$, and $X_2X_4=X_4X_2$ by \cref{ex:lower-order-relations} \ref{itm:order-2}. Moreover, the coefficients $\kappa_{\alpha_1,\alpha_2}^{(1)}$, $\kappa_{\alpha_1,\alpha_2}^{(2)}$, $\kappa_{\alpha_1,\alpha_2}^{(3)}$ were computed in \cref{ex:order-5}. 
	
	When $(R,F_a)$ is the additive formal group law over $R$, one can verify directly, as was done in \cref{rem:specialized-3}, that the coefficients $\kappa_{\alpha_2,\alpha_3}^{(1)}$ and $\kappa_{\alpha_3,\alpha_4}^{(1)}$ equal $0$. One can also verify directly, as was done in \cref{rem:specialized-5}, that the coefficients  $\kappa_{\alpha_1,\alpha_2}^{(1)}$, $\kappa_{\alpha_1,\alpha_2}^{(2)}$, and $\kappa_{\alpha_1,\alpha_2}^{(3)}$ all equal $0$ in this case.
\end{ex}
\section{Appendix}\label{section:appendix}
In the present section, we provide computations for products of up to seven formal Demazure elements for all root systems.
\begin{calc}\label{calc:calculation}
	Let $W$ be a real finite reflection group, let $\Sigma$ be a root system of $W$, and let $\alpha,\beta\in\Sigma$. Below are explicit formulas for the products of $X_\alpha$ and $X_\beta$ up to seven elements. The formulas are written so that the coefficient to the left of a $\delta$-term in the expansion of the product appears after the colon.
	\begin{align*}
	&\underline{X_\beta} \\& (\mathbf{1}-\delta_\beta): y_\beta\\
	&\underline{X_\alpha X_\beta}\\ &(\mathbf{1}-\delta_\beta):y_\alpha y_\beta\\
	&(\delta_{\alpha\beta}-\delta_\alpha):y_\alpha s_\alpha (y_\beta)\\
	&\underline{X_\beta X_\alpha X_\beta}\\
 	&(\mathbf{1}-\delta_\beta):y_\beta\{S_{\beta,\alpha}^{(0,1)}(y_\alpha y_\beta)\}\\
	&(\delta_{\alpha\beta}-\delta_\alpha):y_\alpha y_\beta s_\alpha (y_\beta)\\
	&(\delta_{\beta\alpha} - \delta_{\beta \alpha \beta}):y_\beta s_\beta (y_\alpha) s_\beta s_\alpha (y_\beta)	\\
	&\underline{X_{\alpha,\beta,\ldots}^{(4)}} \\&(\mathbf{1}-\delta_\beta): y_\alpha y_\beta\{y_\alpha y_\beta + s_\beta(y_\alpha y_\beta) + s_\alpha(y_\alpha y_\beta)\}	\\
	&(\delta_{\alpha\beta}-\delta_\alpha): y_\alpha s_{\alpha}(y_\beta)\{S_{\alpha,\beta}^{(0,2)}(y_\alpha y_\beta)\}\\
	&(\delta_{\beta\alpha} - \delta_{\beta \alpha \beta}):y_\alpha y_\beta s_\beta(y_\alpha) s_\beta s_\alpha (y_\beta)\\
	&(\delta_{\alpha,\beta,\ldots}^4 - \delta_{\alpha \beta \alpha}):  y_\alpha s_\alpha (y_\beta) s_\alpha s_\beta (y_\alpha) s_\alpha s_\beta s_\alpha (y_\beta)\\
	&\underline{X_{\beta,\alpha,\ldots}^{(5)}}\\
	&(\mathbf{1}-\delta_\beta):y_\beta\{[S_{\beta,\alpha}^{(0,1)}(y_\alpha y_\beta)]^2  + y_\alpha y_\beta s_\alpha(y_\alpha y_\beta)  +s_\beta(y_\alpha y_\beta) s_\beta s_\alpha (y_\alpha y_\beta)\}\\
	&(\delta_{\alpha\beta}-\delta_\alpha): y_\alpha y_\beta s_\alpha (y_\beta)\{S_{\alpha,\beta}^{(0,2)}(y_\alpha y_\beta) + s_\beta (y_\alpha y_\beta)\}	 \\
	&(\delta_{\beta\alpha} - \delta_{\beta \alpha \beta}): y_\beta s_\beta (y_\alpha) s_\beta s_\alpha(y_\beta)\{S_{\beta,\alpha}^{(0,3)}(y_\alpha y_\beta)\}\\
	&(\delta_{\alpha,\beta,\ldots}^{(4)} - \delta_{\alpha\beta\alpha}): y_\alpha y_\beta s_\alpha (y_\beta) s_\alpha s_\beta (y_\alpha) s_\alpha s_\beta s_\alpha (y_\beta)	\\
	&(\delta_{\beta,\alpha,\ldots}^{(4)} - \delta_{\beta,\alpha,\ldots}^{(5)}): y_\beta s_\beta(y_\alpha) s_\beta s_\alpha(y_\beta) s_\beta s_\alpha s_\beta (y_\alpha) s_{\beta,\alpha,\ldots}^{(4)} (y_\beta)	\\
	&\underline{X_{\alpha,\beta,\ldots}^{(6)}}\\
	&(\mathbf{1}-\delta_\beta):y_\alpha y_\beta\{[S_{\beta,\alpha}^{(0,1)}(y_\alpha y_\beta)+s_{\alpha}(y_\alpha y_\beta)]^2 + s_\beta (y_\alpha y_\beta) s_\beta s_\alpha (y_\alpha y_\beta) \\&+ s_\alpha (y_\alpha y_\beta) s_\alpha s_\beta (y_\alpha y_\beta)- s_\alpha (y_\alpha y_\beta) s_\beta (y_\alpha y_\beta)\}\\
	&(\delta_{\alpha\beta} - \delta_\alpha):y_\alpha s_{\alpha}(y_\beta)\{[S_{\alpha,\beta}^{(0,2)}(y_\alpha y_\beta)]^2  + y_\alpha y_\beta s_\beta (y_\alpha y_\beta) \\&+ s_\alpha s_\beta (y_\alpha y_\beta) s_\alpha s_\beta s_\alpha (y_\alpha y_\beta)- y_\alpha y_\beta s_\alpha s_\beta (y_\alpha y_\beta)\}\\
	&(\delta_{\beta\alpha} - \delta_{\beta\alpha\beta}):y_\alpha y_\beta s_\beta (y_\alpha)s_\beta s_\alpha(y_\beta)\{S_{\beta,\alpha}^{(0,3)}(y_\alpha y_\beta) + s_\alpha (y_\alpha y_\beta)\}	\\
	&(\delta_{\alpha,\beta,\ldots}^{(4)} - \delta_{\alpha\beta\alpha}):y_\alpha s_\alpha (y_\beta) s_\alpha s_\beta (y_\alpha) s_\alpha s_\beta s_\alpha (y_\beta)\{S_{\alpha,\beta}^{(0,4)}(y_\alpha y_\beta)\}\\
	&(\delta_{\beta,\alpha,\ldots}^{(4)} - \delta_{\beta,\alpha,\ldots}^{(5)}):y_\alpha y_\beta s_\beta (y_\alpha) s_\beta s_\alpha (y_\beta) s_\beta s_\alpha s_\beta (y_\alpha) s_{\beta,\alpha,\ldots}^{(4)} (y_\beta)\\
	&(\delta_{\alpha,\beta,\ldots}^{(6)}- \delta_{\alpha,\beta,\ldots}^{(5)}):y_\alpha s_\alpha (y_\beta) s_\alpha s_\beta (y_\alpha) s_\alpha s_\beta s_\alpha (y_\beta) s_{\alpha,\beta,\ldots}^{(4)}(y_\alpha)s_{\alpha,\beta,\ldots}^{(5)}(y_\beta)\\
	&\underline{X_{\beta\ldots}^{(7)}}\\
	&(\mathbf{1}-\delta_\beta):y_\beta\{[S_{\beta,\alpha}^{(0,1)}(y_\alpha y_\beta)]^3 + 2(y_\alpha y_\beta)^2 s_\alpha(y_\alpha y_\beta)\\&+ 2y_\alpha y_\beta s_\alpha(y_\alpha y_\beta)s_\beta(y_\alpha y_\beta) +2y_\alpha y_\beta s_\beta(y_\alpha y_\beta) s_\beta s_\alpha (y_\alpha y_\beta) \\
	&+ 2[s_\beta(y_\alpha y_\beta)]^2 s_\beta s_\alpha (y_\alpha y_\beta) + y_\alpha y_\beta[s_\alpha(y_\alpha y_\beta)]^2 
	\end{align*}
	\begin{align*}&+ y_\alpha y_\beta s_\alpha (y_\alpha y_\beta) s_\alpha s_\beta (y_\alpha y_\beta) + s_\beta (y_\alpha y_\beta) [s_\beta s_\alpha(y_\alpha y_\beta)]^2 \\&+ s_\beta (y_\alpha y_\beta)s_\beta s_\alpha 	\\
	&(\delta_{\alpha\beta}-\delta_\alpha):y_\alpha y_\beta s_{\alpha}(y_\beta)\{[S_{\alpha,\beta}^{(0,2)}(y_\alpha y_\beta)+s_\beta(y_\alpha y_\beta)]^2 \\&+ s_\alpha s_\beta (y_\alpha y_\beta) s_\alpha s_\beta s_\alpha (y_\alpha y_\beta) +  s_\beta (y_\alpha y_\beta)s_\beta s_\alpha (y_\alpha y_\beta)- y_\alpha y_\beta s_\alpha s_\beta (y_\alpha y_\beta) \\&- s_\alpha (y_\alpha y_\beta) s_\beta (y_\alpha y_\beta) - s_\beta (y_\alpha y_\beta) s_\alpha s_\beta (y_\alpha y_\beta)\}
	(y_\alpha y_\beta) s_\beta s_\alpha s_\beta (y_\alpha y_\beta)\}	\\
	&(\delta_{\beta\alpha}-\delta_{\beta \alpha \beta}):y_\beta s_\beta (y_\alpha) s_\beta s_\alpha (y_\beta) \{[S_{\beta,\alpha}^{(0,3)}(y_\alpha y_\beta)]^2 + y_\alpha y_\beta s_\alpha (y_\alpha y_\beta) \\&+ s_\beta s_\alpha s_\beta (y_\alpha y_\beta)s_{\beta,\alpha,\ldots}^{(4)}y_\alpha y_\beta)-y_\alpha y_\beta s_\beta s_\alpha (y_\alpha y_\beta) - y_\alpha y_\beta s_\beta s_\alpha s_\beta (y_\alpha y_\beta) \\&- s_\beta (y_\alpha y_\beta) s_\beta s_\alpha s_\beta (y_\alpha y_\beta)\}\\
	&(\delta_{\alpha,\beta,\ldots}^{(4)} - \delta_{\alpha\beta\alpha}):y_\alpha y_\beta s_\alpha (y_\beta) s_\alpha s_\beta (y_\alpha) s_\alpha s_\beta s_\alpha (y_\beta) \{S_{\alpha,\beta}^{(0,4)}(y_\alpha y_\beta) + s_\beta(y_\alpha y_\beta)\}	\\
	&(\delta_{\beta,\alpha,\ldots}^{(4)} - \delta_{\beta,\alpha,\ldots}^{(5)}):y_\beta s_\beta (y_\alpha) s_\beta s_\alpha (y_\beta) s_\beta s_\alpha s_\beta (y_\alpha) s_{\beta,\alpha,\ldots}^{(4)}(y_\beta)\{S_{\beta,\alpha}^{(0,5)}(y_\alpha y_\beta)\}\\
	&(\delta_{\alpha,\beta,\ldots}^{(6)} - \delta_{\alpha,\beta,\ldots}^{(5)}):y_\alpha y_\beta s_\alpha(y_\beta) s_\alpha s_\beta (y_\alpha) s_\alpha s_\beta s_\alpha (y_\beta) s_{\alpha,\beta,\ldots}^{(4)}(y_\alpha) s_{\alpha,\beta,\ldots}^{(5)}(y_\beta)\\
	&(\delta_{\beta,\alpha,\ldots}^{(6)} - \delta_{\beta,\alpha,\ldots}^{(7)}):y_\beta s_\beta(y_\alpha) s_\beta s_\alpha (y_\beta) s_\beta s_\alpha s_\beta (y_\alpha) s_{\beta,\alpha,\ldots}^{(4)}(y_\beta) s_{\beta,\alpha,\ldots}^{(5)}(y_\alpha) s_{\beta,\alpha,\ldots}^{(6)}(y_\beta).
	\end{align*}
\end{calc}



\begin{thebibliography}{100} 

	\bibitem{AM69} Atiyah, M. F., Macdonald, I. G. (1969). \textit{Introduction to commutative algebra.} Reading, MA: Addison-Wesley Publishing Co.
	
	\bibitem{BB05} Bj\"{o}rner, A., Brenti, F. (2005). \textit{Combinatorics of {C}oxeter {G}roups}. Graduate Texts in Mathematics, Vol. 231. New York, NY: Springer.
	
	
		

	\bibitem{B66} Bourbaki, N. (1966). \textit{Elements of mathematics. {G}eneral topology. {P}art 2.} Hermann, Paris.
	
	\bibitem{B68} Bourbaki, N. (1968). \textit{{G}roupes et
		{A}lg\`ebres de {L}ie.} Actualit\'{e}s Scientifiques et Industrielles, No. 1337. Hermann, Paris.
	
	\bibitem{B89} Bourbaki, N. (1989). \textit{General topology. {C}hapitres 1--4.} Berlin, Germany: Springer.
	

	
	\bibitem{BB10} Buchstaber, V., Bunkova, E. (2010). Elliptic formal group laws, integral {H}irzebruch genera and {K}richever genera. arXiv preprint. \arxivpdf{1010.0944}.

	
	\bibitem{CPZ} Calm\`es, B., Petrov, V., Zainoulline, K. (2013). Invariants, torsion indices and oriented cohomology of
	complete flags. \textit{Ann. Sci. \'{E}c. Norm. Sup\'{e}r. (4)}. 46(3): 405--448. DOI: 10.24033/asens.2192. \arxiv{0905.1341}.
	

	
	\bibitem{CZZ3} Calm\`es, B., Zainoulline, K., Zhong, C. (2015). Equivariant oriented cohomology of flag varieties. \textit{Doc. Math.} Extra vol.: Alexander S. Merkurjev's sixtieth birthday: 113--144. \arxiv{1409.7111}.
	
	\bibitem{CZZ1} Calm\`es, B., Zainoulline, K., Zhong, C. (2016). A coproduct structure on the formal affine {D}emazure algebra. \textit{Math. Z.} 282(3-4): 1191--1218. DOI: 10.1007/s00209-015-1585-6. \arxiv{1209.1676}.
	
	\bibitem{CZZ2} Calm\`es, B., Zainoulline, K., Zhong, C. (2018). Push-pull operators on the formal affine {D}emazure algebra and its dual. \textit{Manuscripta Math.} 160(1-2): 9--50. DOI: 10.1007/s00229-018-1058-4. \arxiv{1312.0019}.

	
	\bibitem{D73} Demazure, M. (1973). Invariants sym\'{e}triques entiers des groupes de {W}eyl et
	torsion. \textit{Invent. Math.} 21: 287--301. DOI: 10.1007/BF01418790.
	
	
	\bibitem{D74} Demazure, M. (1974). D\'{e}singularisation des vari\'{e}t\'{e}s de {S}chubert g\'{e}n\'{e}ralis\'{e}es. \textit{Ann. Sci. \'{E}cole Norm. Sup. (4).} 7: 53--88. \newline URL: http://www.numdam.org/item?id=ASENS\_1974\_4\_7\_1\_53\_0.
	
	
	\bibitem{D77} Deodhar, V. V. (1977). Some characterizations of {B}ruhat ordering on a {C}oxeter
	group and determination of the relative {M}\"{o}bius function. \textit{Invent. Math.} 39(2): 187--198. DOI: 10.1007/BF01390109.
	
	\bibitem{F68} Fr\"{o}hlich, A. (1968). \textit{Formal groups}. Lecture Notes in Mathematics, No. 74. Berlin, Germany: Springer.
	
	
	\bibitem{GR13} Ganter, N., Ram, A. (2013). Generalized {S}chubert calculus. \textit{J. Ramanujan Math. Soc.} 28A: 149--190. \arxiv{1212.5742}.
	
	
	\bibitem{H82} Hiller, H. (1982). \textit{Geometry of {C}oxeter {G}roups}. Research Notes in Mathematics, Vol. 54. Marshfield, MA: Pitman.
	
	\bibitem{HMSZ} Hoffnung, A., Malag\'on-L\'opez, J., Savage, A., Zainoulline, K. (2014). Formal {H}ecke algebras and algebraic oriented cohomology
	theories. \textit{Selecta Math. (N.S.)} 20(4): 1213--1245. DOI: 10.1007/s00029-013-0132-8. \arxiv{1208.4114}.
	
	\bibitem{H90} Humphreys, J. (1990). \textit{Reflection {G}roups and {C}oxeter {G}roups}. Cambridge Studies in Advanced Mathematics, Vol. 29. Cambridge, MA: Cambridge University Press.
	
	\bibitem{KK86} Kostant, B., Kumar, S. (1986). The nil {H}ecke ring and cohomology of {$G/P$} for a
	{K}ac-{M}oody group {$G$}. \textit{Adv. in Math.} 62(3): 187--237. DOI: 10.1016/0001-8708(86)90101-5.
	
	\bibitem{KK90} Kostant, B., Kumar, S. (1990). {$T$}-equivariant {$K$}-theory of generalized flag varieties. \textit{J. Differential Geom.} 32(2): 549--603. DOI: 10.1073/pnas.84.13.4351.
	
	\bibitem{L33} Lehmer, D. H. (1933). Note on {T}rigonometric {A}lgebraic {N}umbers. \textit{Amer. Math. Monthly.} 40(3): 165--166. DOI: 10.2307/2301023.	
	
	\bibitem{LM07} Levine, M., Morel, F. (2007). \textit{Algebraic {C}obordism}. Springer Monographs in Mathematics. Berlin, Germany: Springer.
	
	\bibitem{S09} Silverman, J. H. (2009). \textit{The arithmetic of elliptic curves}. Graduate Texts in Mathematics, Vol. 106. Netherlands: Springer.
	
	\bibitem{S} Strickland, N. Formal Groups. Unpublished notes. 	URL: neil-strickland.staff.shef.ac.uk/ courses/formalgroups/fg.pdf.	
	
	\bibitem{YY16} Yamagata, K., Yamagishi, M. (2016). On the ring of integers of real cyclotomic fields. \textit{Proc. Japan Acad. Ser. A Math. Sci.} 92(6):73--76. DOI: 10.3792/pjaa.92.73.

\end{thebibliography}
\end{document}